\numberwithin{equation}{section}
\numberwithin{figure}{section}
\newtheorem{theorem}{Theorem}[section]
\newtheorem{proposition}[theorem]{Proposition}
\newtheorem{lemma}[theorem]{Lemma}
\newtheorem{corollary}[theorem]{Corollary}
\theoremstyle{definition}
\newtheorem{definition}[theorem]{Definition}
\newtheorem{property}[theorem]{Property}
\newtheorem{assumptions}{Assumptions}
\newtheorem{assumption}[assumptions]{Assumption}
\theoremstyle{remark}
\newtheorem{remark}[theorem]{Remark}
\newtheorem{claim}[theorem]{Claim}
\newcommand{\norm}[1]{\left\lVert #1\right\rVert}
\newcommand{\eps}{\varepsilon}
\newcommand{\R}{\mathbb{R}}
\newcommand{\FFF}{\mathcal{F}}
\newcommand{\JJJ}{\mathcal{J}}
\newcommand{\PPP}{\mathcal{P}}
\newcommand{\RRR}{\mathcal{R}}
\newcommand{\tvarphi}{\tilde{\varphi}}
\newcommand{\tw}{\tilde{w}}
\newcommand{\supent}[1]{\left\lceil #1 \right\rceil}
\renewcommand{\leq}{\leqslant}
\renewcommand{\geq}{\geqslant}
\DeclareMathAlphabet{\mathpzc}{OT1}{pzc}{m}{it}
\renewcommand{\Im}{\mathcal I\!\mathpzc{m}}
\DeclareMathOperator{\loc}{loc}
\begin{document}

\title[Double power NLS]{Mass-energy scattering criterion for double power Schr\"odinger equations}

\author[Thomas Duyckaerts]{Thomas Duyckaerts}
\address[Thomas Duyckaerts]{LAGA (UMR 7539),
\newline\indent
Institut Galil\'ee, Universit\'e Sorbonne Paris Nord,
  \newline\indent
  99 avenue Jean-Baptiste Cl\'ement,
  \newline\indent
  93430 Villetaneuse,
  France
  \newline\indent
  and
  Département Math\'ematiques et Application,
  \newline\indent
  \'Ecole Normale Sup\'erieure
  \newline\indent
  45 rue d'Ulm
  \newline\indent
  75005 Paris,
  France}
\email[Thomas Duyckaerts]{duyckaer@math.univ-paris13.fr}

\author[Phan Van Tin]{Phan Van Tin}
\address[Phan Van Tin]{LAGA (UMR 7539),
\newline\indent
Institut Galil\'ee, Universit\'e Sorbonne Paris Nord,
\newline\indent
  99 avenue Jean-Baptiste Cl\'ement,
  \newline\indent
  93430 Villetaneuse,
  France}
\email[Phan Van Tin]{vantin.phan@math.univ-paris13.fr}

\subjclass{35Q55}

\date{\today}
\keywords{Nonlinear Schr\"odinger equations, double power nonlinearity, profile decomposition, stability theory, scattering}

\begin{abstract} 
We consider the nonlinear Schr\"odinger equation with double power nonlinearity. We extend the scattering result in \cite{KiOhPoVi17} for all $L^2$-supercritical powers, specially, our results adapt to the cases of energy-supercritical nonlinearity. 
\end{abstract}

\maketitle

\tableofcontents
\section{Introduction and main resuls}

This article concerns the nonlinear Schr\"odinger (NLS) equation
\begin{equation}
 \label{NLS}
 i\partial_tu+\Delta u=|u|^{p_0}u-|u|^{p_1}u,
\end{equation} 
in space dimension $d\geq 1$, where $\frac{4}{d}<p_1< p_0$. 

The equation \eqref{NLS} has $3$ conserved quantities: the mass,
\begin{equation*}
M(u(t))=\int_{\R^d} |u(t,x)|^2dx,
\end{equation*}
the energy
\begin{equation*}
E(u(t))=\int_{\R^d}|\nabla u(t,x)|^2dx+\frac{2}{p_0+2}\int_{\R^d}|u(t,x)|^{p_0+2}dx-\frac{2}{p_1+2}\int_{\R^d}|u(t,x)|^{p_1+2}dx.
\end{equation*}
and the momentum
\begin{equation*}
P(u(t))=\Im \int_{\R^d}\nabla u(t,x)\,\overline{u}(t,x)\,dx.
\end{equation*}
We recall that a solution of \eqref{NLS} with initial data in a Sobolev space $H^{s}$ where the equation \eqref{NLS} is well-posed is said to scatter in $H^{s}$ forward in time when it is defined on $[0,\infty)$, and there exists $v_0\in H^{s}$ such that
$$\lim_{t\to\infty} \left\|e^{it\Delta}v_0-u(t)\right\|_{H^{s}}=0.$$
Our goal is to give sufficient conditions of scattering for solutions of \eqref{NLS}.



The equation \eqref{NLS} has stationary wave solutions, of the form $e^{i\omega t}Q_{\omega}$, where $Q_{\omega}$ is a solution of the equation
\begin{equation}
 \label{Ell}
 -\Delta \varphi=-|\varphi|^{p_0}\varphi+|\varphi|^{p_1}\varphi-\omega \varphi,
\end{equation}
see \cite[Theorem 2]{LeNo20} and references therein. It
has positive solutions for some values of $\omega$. More precisely, there exists an explicit $\omega_*$ (depending only on $p_0$ and $p_1$) such that for $0<\omega<\omega_*$, the equation \eqref{Ell} has exactly one positive solution (up to translation) $Q_{\omega}\in (H^1\cap L^{\infty})(\R^d)$, and no such solution for $\omega\geq \omega_*$.

Some of these solutions are related to the minimization problem:
\begin{equation}
 \label{minim}
I(m)=\inf_{\begin{matrix} \varphi \in H^1(\R^d) \cap L^{p_0+2}(\R^d)\\ \int_{\R^d} |\varphi|^2\,dx =m \end{matrix}} E(u).
 \end{equation}
 (see \cite[Theorem 6]{LeNo20}).
There is a critical mass $m_c$ such that for $m\leq m_c$, $I(m)=0$ and for $m>m_c$, $I(m)<0$. The minimum is not attained for $m<m_c$. In the case $p_1>\frac{4}{d}$ that we consider here, the minimum is attained for $m\geq m_c$ by a positive, radial solution of \eqref{Ell}, that we will denote by $Q_{\omega}$.  The solution $Q_{\omega_c}$ corresponding to the mass $m_c$ has minimal mass among all solutions of the family of equations \eqref{Ell}.

Obviously, the stationary wave solutions do not scatter. Other examples of nonscattering solutions are travelling waves that are exactly the Galilean transforms of stationary waves. According to the soliton resolution conjecture, these stationary and travelling waves should be the only obstruction, leading to the conjecture that solutions with mass below $m_c$ scatter in both time directions. A weaker result was proved in \cite{KiOhPoVi17} in the case $p_0=4$, $p_1=2$, $d=3$, namely that scattering holds for solutions with mass $M(u)\leq \frac{4}{3\sqrt{3}}\tilde{m}_c$, and with an additional bound on the energy in the range of mass $\frac{4}{3\sqrt{3}}\tilde{m}_c<M(u)<m_c$. The goal of this article is to show that this picture is not specific to the case studied in \cite{KiOhPoVi17}, but holds for all exponents $\frac{4}{d}<p_1<p_0$. We will rely on the work \cite{LeNo20} on the stationary problem \eqref{Ell}, and on our previous article \cite{DuyckaertsPhan24P}, where preliminary results were obtained for a family of nonlinear Schr\"odinger equations that includes \eqref{NLS}.

As in \cite{KiOhPoVi17}, we will use the standard compactness/rigidity scheme (as initiated in \cite{KeMe06} for the energy-critical problem). One of the key ingredient of this scheme is the positivity of the functional:
$$\Phi(u)=\int |\nabla u|^2-\frac{dp_1}{2(p_1+2)}|u|^{p_1+2}+\frac{dp_0}{2(p_0+2)}|u|^{p_0+2},$$
that appears in the virial identity
\begin{equation}
\label{virial_ID}
\frac{d}{dt} \Im \int x\cdot\nabla u\,\overline{u}=2\Phi(u).
\end{equation}
In Section \ref{S:var}, we prove that there is a critical value $\tilde{m}_c\in(0,m_c)$ such that $\Phi(u)>0$ for every nonzero $u$ such that $M(u)<\tilde{m}_c$, and that for $\tilde{m}_c\leq M(u)<m_c$, we have $\Phi(u)>0$ with an additional condition on the energy. Namely, let:
\begin{equation}
\label{def_tilde_m}
\tilde{m}_c=\left( \frac{p_0}{p_1} \right)^{\frac{p_1d-4}{2(p_0-p_1)}}\left( \frac{4}{dp_1} \right)^{\frac d2}m_c.
\end{equation}
Then:
\begin{theorem}
\label{T:positivity}
Assume $4/d<p_1<p_0$.
There exists a nonincreasing function $\mathsf{e}(m)$: $[0,m_c)\to (0,\infty]$, with $\mathsf{e}(m)=\infty$ for $0<m<\tilde{m}_c$, and $\mathsf{e}(m)\in (0,\infty)$ for $\tilde{m}_c\leq m<m_c$, such that, letting
\begin{equation}
\label{defRRR}
 \RRR:= \left\{\varphi\in H^{s_0}\cap H^1,\; 0< \int|\varphi|^2<m_c,\; E(\varphi)<\mathsf{e}\Big(\int |\varphi|^2\Big)\right\},
\end{equation}
where $s_0=\frac{d}{2}-\frac{2}{p_0}$, one has
$$\forall u\in \RRR,\quad \Phi(u)>0.$$
The function $\mathsf{e}$ is strictly decreasing on $[\tilde{m}_c,m_c]$ if $d\in \{1,2,3,4\}$ or $p_0\leq \frac{4}{d-2}$.
\end{theorem}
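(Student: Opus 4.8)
The plan is to \emph{define} the threshold function by the constrained variational problem
\[
\mathsf{e}(m):=\inf\Big\{E(u):u\in H^{s_0}\cap H^1,\ \textstyle\int|u|^2=m,\ \Phi(u)\leq 0\Big\},
\]
with the convention $\inf\emptyset=+\infty$. With this definition the positivity statement is immediate: if $\int|u|^2=m<m_c$ and $E(u)<\mathsf{e}(m)$, then $u$ cannot lie in the admissible set, so $\Phi(u)>0$, which is exactly $\Phi>0$ on $\RRR$. Hence the real content is to establish four qualitative properties of $\mathsf{e}$. The backbone of every step is the mass-preserving scaling $u_\lambda:=\lambda^{d/2}u(\lambda\,\cdot)$, for which $\int|u_\lambda|^2=\int|u|^2$ and, by a direct computation, $\Phi(u_\lambda)=\tfrac{\lambda}{2}\tfrac{d}{d\lambda}E(u_\lambda)$; thus the sign of $\Phi$ along an orbit records the monotonicity of the energy. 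Since $E(u_\lambda)\to0^+$ as $\lambda\to0$ and $E(u_\lambda)\to+\infty$ as $\lambda\to\infty$ (because $2<\tfrac{dp_1}{2}<\tfrac{dp_0}{2}$), the map $\lambda\mapsto\Phi(u_\lambda)=\lambda^2\|\nabla u\|_{L^2}^2-\tfrac{dp_1}{2(p_1+2)}\lambda^{dp_1/2}\|u\|_{L^{p_1+2}}^{p_1+2}+\tfrac{dp_0}{2(p_0+2)}\lambda^{dp_0/2}\|u\|_{L^{p_0+2}}^{p_0+2}$ is positive for small and large $\lambda$ and has at most one interval of negativity.

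For the threshold $\tilde m_c$ I would show that $\{\Phi\leq0\}$ is empty at mass $m$ precisely for $m<\tilde m_c$, identifying $\tilde m_c=\inf\{\int|u|^2:\Phi(u)=0\}$. On the critical configuration the orbit minimum just touches zero, so simultaneously $\Phi(u)=0$ and $\tfrac{d}{d\lambda}\Phi(u_\lambda)\big|_{\lambda=1}=0$; this forces the rigid relations $\tfrac{dp_0}{2(p_0+2)}\|u\|_{L^{p_0+2}}^{p_0+2}=\tfrac{dp_1-4}{d(p_0-p_1)}\|\nabla u\|_{L^2}^2$ and $\tfrac{dp_1}{2(p_1+2)}\|u\|_{L^{p_1+2}}^{p_1+2}=\tfrac{dp_0-4}{d(p_0-p_1)}\|\nabla u\|_{L^2}^2$. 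Minimizing the mass over profiles subject to these identities is a sharp Gagliardo--Nirenberg problem whose optimizer is the ground-state shape; using a two-parameter (amplitude $\times$ space) rescaling to normalize the profile and comparing with the minimal-mass ground state $Q_{\omega_c}$ furnished by \cite{LeNo20} yields exactly the constant \eqref{def_tilde_m}. For $m<\tilde m_c$ this gives $\Phi>0$ for every nonzero $u$ of mass $m$, i.e.\ $\mathsf{e}(m)=+\infty$.

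On $[\tilde m_c,m_c)$, finiteness of $\mathsf{e}$ follows from explicit admissible competitors (rescalings of the above profiles, or the solitons $Q_\omega$, which satisfy $\Phi(Q_\omega)=0$ by the Pohozaev identity), while positivity $\mathsf{e}(m)>0$ uses that for $m<m_c$ one has $I(m)=0$ with the infimum \emph{not attained}: a sequence with $\Phi(u_n)\leq0$, $\int|u_n|^2=m$ and $E(u_n)\to0$ would be minimizing for $I(m)$, hence must spread out in frequency, and one checks that along such a sequence $\Phi(u_n)\to0^+$, a contradiction. For the non-strict monotonicity I would fix a near-optimal $u$ on the boundary $\Phi=0$ at mass $m$ and move along the two-parameter rescaling $a\,u(b\,\cdot)$ constrained to $\{\Phi=0\}$; a computation shows $E$ does not increase as the mass increases along this curve, so scaling up to any $m'>m$ produces an admissible competitor with $E\leq E(u)$, whence $\mathsf{e}(m')\leq\mathsf{e}(m)$.

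Strict monotonicity is where existence of a minimizer enters, and this is the crux. If the infimum is attained at some $u$ with $\Phi(u)=0$, the Karush--Kuhn--Tucker relation reads $E'(u)=\mu M'(u)+\nu\Phi'(u)$ with $\nu\geq0$; pairing with the scaling generator $\tfrac d2 u+x\cdot\nabla u$ annihilates the $E'$ and $M'$ contributions (since $\Phi(u)=0$ and mass is scaling invariant) and leaves $\nu\,\tfrac{d}{d\lambda}\Phi(u_\lambda)\big|_{\lambda=1}=0$. As the constrained minimum on each orbit sits at its energy minimum $\lambda_+$, this last factor is strictly positive, forcing $\nu=0$. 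Hence $u$ solves \eqref{Ell} with $\omega=-\mu$, and the classification of positive solutions gives $\omega>0$; by Lagrange sensitivity $\tfrac{d}{dm}\mathsf{e}(m)=\mu=-\omega<0$, which is the strict decrease. The whole argument therefore hinges on compactness of minimizing sequences for the constrained problem, which I would obtain by a profile decomposition in $H^{s_0}\cap H^1$, ruling out dichotomy and vanishing via the defocusing $p_0$-term and the constraint $m<m_c$. This is precisely where the hypotheses $d\in\{1,2,3,4\}$ or $p_0\leq\tfrac4{d-2}$ are needed: in the energy-supercritical regime $s_0>1$ the defocusing term no longer yields enough compactness and a minimizer may fail to exist, so only the non-strict statement survives. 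Securing this compactness is the main obstacle of the proof.
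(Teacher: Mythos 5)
Your overall architecture is the same as the paper's: you define the threshold by the constrained problem $\mathsf{e}(m)=\inf\{E(u):M(u)=m,\ \Phi(u)\leq 0\}$ (the paper's $I^{\Phi}(m)$, posed with $\Phi(u)=0$ and shown equivalent to $\Phi(u)\leq 0$ via the rescaling $u^{\lambda}=\lambda^{d/2}u(\lambda\cdot)$, Lemma \ref{lm90}), so that positivity of $\Phi$ on $\RRR$ is tautological, and the work is in the four qualitative properties. However, several of your replacement arguments have genuine gaps. First, the identification of the explicit constant $\tilde m_c$ in \eqref{def_tilde_m}: the paper's key device is the \emph{two-parameter} scaling $u_{a,b}=au(b\cdot)$, which converts $\Phi$ into the energy of a rescaled function and yields the exact identity $I\bigl(a_0^2b_0^{-d}m\bigr)=a_0^2b_0^{2-d}\,\tilde I(m)$ with $\tilde I(m)=\inf\{\Phi(u):M(u)=m\}$; all properties of $\tilde I$ (hence the value of $\tilde m_c$) are then imported wholesale from \cite{LeNo20}. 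Your route --- locating the minimal mass on $\{\Phi=0\}$ via simultaneous vanishing of $\Phi$ and $\partial_\lambda\Phi(u_\lambda)$ and solving a doubly-constrained sharp Gagliardo--Nirenberg problem --- presupposes attainment of $\inf\{M(u):\Phi(u)=0\}$ and does not actually produce the constant; without the scaling identity it is unclear how \eqref{def_tilde_m} would emerge. Second, your non-strict monotonicity step asserts that $E$ does not increase as the mass increases along the curve $\{\Phi(au(b\cdot))=0\}$. Computing the differentials at $(a,b)=(1,1)$ on $\{\Phi=0\}$ (where $\|\nabla u\|_2^2=P_1-P_0$ with $P_j=\tfrac{dp_j}{2(p_j+2)}\|u\|_{p_j+2}^{p_j+2}$), even the sign of $dM$ along the constrained direction depends on the ratio $P_0/\|\nabla u\|_2^2$, so this monotonicity is not a "direct computation" and may fail. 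The paper instead scales slightly to make $\Phi$ strictly negative with an $O(\eps)$ change in $M$ and $E$, then adds a spread-out bump $\lambda^{d/2}\psi(\lambda x)$, $\lambda\to 0$, which raises the mass to exactly $m'$ while contributing $o(1)$ to $\dot H^1$ and $L^{p_j+2}$; this is robust and you should adopt it.

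Third, your strict-monotonicity argument relies on the Lagrange sensitivity relation $\tfrac{d}{dm}\mathsf{e}(m)=\mu$, which presupposes differentiability of $\mathsf{e}$ --- not known at this stage (only monotonicity is) --- and also needs positivity of the minimizer and exclusion of a tangential zero of $\lambda\mapsto\Phi(u_\lambda)$ at $\lambda=1$ to conclude $\nu=0$. The paper argues by contradiction instead: if $I^{\Phi}(m_1)=I^{\Phi}(m_2)$ with $m_1<m_2$, the minimizer at mass $m_1$ minimizes $E$ on $\{0<M\leq m_2,\ \Phi=0\}$ with the mass constraint \emph{inactive}, so the Euler--Lagrange equation has no $\omega u$ term, $-\Delta v+a_1v^{p_1+1}+a_0v^{p_0+1}=0$, and a Liouville-type result (Proposition \ref{P:uniqueness}, valid for radial $H^1\cap L^{p_0+2}$ solutions when $d\leq 4$) forces $v\equiv 0$; when $p_0\leq\tfrac{4}{d-2}$ one instead uses the mass-raising Lemma \ref{Lm:exist desired profile}. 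Relatedly, your diagnosis of where the hypothesis $d\in\{1,2,3,4\}$ or $p_0\leq\tfrac{4}{d-2}$ enters is off: compactness of minimizing sequences is obtained for all $d$ by Schwarz rearrangement and the Strauss lemma; the restriction is needed only to rule out mass loss in the weak limit (via the Liouville theorem or the embedding $H^1\hookrightarrow L^{p_0+2}$), not to get compactness in the supercritical regime. Finally, your positivity argument ($\mathsf{e}(m)>0$) should be made quantitative as in the paper: $\Phi(u)=0$ forces $\|\nabla u\|_2\gtrsim_m 1$ by Gagliardo--Nirenberg (Claim \ref{Cl:nabla}), and $E(u)\geq\delta(m)\|\nabla u\|_2^2$ for $m<m_c$, giving a strictly positive lower bound directly rather than via the vague "spreading in frequency" dichotomy.
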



We next state our scattering results. Throughout of this paper, we will make the following additional assumption on the exponent:
\begin{assumption}
\label{Assum:NL}
$\frac{4}{d}<p_1< p_0$ and for all $j\in\{1,2\}$, $\supent{s_0}< p_j$ or $p_j$ is an even integer.\footnote{$\supent{s}$ denotes the smallest integer number larger or equal $s$.}
\end{assumption}
This type of assumption is classical to ensure a minimal regularity of the nonlinearity, however we have not tried to obtain the optimal conditions. It is certainly possible to improve this condition using techniques as in \cite{Visan07}.

We first consider the case where the higher order nonlinearity is energy-subcritical or energy-critical, that is $d=1,2$ or $d\geq 3$ and $p_0\leq \frac{4}{d-2}$. Equivalently, the critical Sobolev exponent $s_0=\frac{d}{2}-\frac{2}{p_0}$ for the usual (NLS) equation with a single power nonlinearity
\begin{equation}
 \label{NLSh}
 i\partial_tu+\Delta u=|u|^{p_0}u
\end{equation}
satisfies $s_0\leq 1$.  We recall that this equation \eqref{NLSh} is well-posed in $H^1$ (see \cite{Kato87}, \cite{CaWe90}),

One can prove in this case that the equation is well-posed in $H^1$. It follows from conservation laws and the standard Gagliardo-Nirenberg inequality that all solutions of \eqref{NLS} are bounded in $H^1$. Using the well-posedness theory when $s_0<1$, and a more refined argument ultimately relying on
the fact that all solutions of \eqref{NLSh} scatter (see \cite{CoKeStTaTa08}, \cite{RyVi07} and \cite{Visan07}) for $s_0=1$, one can deduce from this bound that all solutions of \eqref{NLS} are global (see \cite{Zhang06}, \cite{TaViZh07}).
\begin{theorem}
\label{T:scatt_intro1}
Assume $d=1,2$, or $d\geq 3$ and $p_0\leq \frac{4}{d-2}$. Suppose that Assumption \ref{Assum:NL} holds. Let $u_0\in \RRR$. Then $u$ is global and scatters in both time directions.
\end{theorem}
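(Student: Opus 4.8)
The plan is to run the concentration--compactness/rigidity scheme of \cite{KeMe06} in the energy space. Since $s_0\leq 1$ here, $H^{s_0}\cap H^1=H^1$, so $\RRR$ is simply the subset of $H^1$ determined by $0<M(u)<m_c$ and $E(u)<\mathsf{e}(M(u))$. As recalled before the statement, every such $u_0$ generates a global solution that is uniformly bounded in $H^1$, via conservation of mass and energy together with the Gagliardo--Nirenberg inequality; the remaining task is to promote global existence to scattering. I would first record the small-data theory, which shows that solutions with small $H^1$ norm scatter and thus provides both the base of the induction and a scattering neighbourhood of the origin inside $\RRR$.

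Assuming for contradiction that scattering fails somewhere on $\RRR$, I would introduce a critical threshold — the supremum of the levels, measured by the mass--energy pair consistently with the variational structure of Theorem \ref{T:positivity}, below which every datum in $\RRR$ scatters — and extract a critical element $u_c\in\RRR$ sitting exactly at this threshold. This is the technical core and relies on the preliminary results of \cite{DuyckaertsPhan24P}: a linear profile decomposition adapted to \eqref{NLS} together with the long-time perturbation (stability) theory. The minimality of the threshold, combined with stability, forces all but one profile to be negligible, so that $u_c$ is a nonzero, non-scattering solution whose trajectory is precompact in $H^1$ modulo the translation symmetry, i.e. there is a path $x(t)\in\R^d$ with $\{u_c(t,\cdot-x(t)):t\in\R\}$ relatively compact in $H^1$. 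A point requiring care is that the constraints defining $\RRR$ pass to the limit: since $\mathsf{e}$ is nonincreasing (and strictly decreasing in the present range by Theorem \ref{T:positivity}), each surviving profile again lies in $\RRR$, so that $\Phi>0$ remains available along the whole trajectory.

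The final step is rigidity. Since $u_c\neq 0$ and the mass is conserved, the precompact set $K=\overline{\{u_c(t,\cdot-x(t)):t\in\R\}}\subset H^1$ avoids $0$; because $K\subset\RRR$, Theorem \ref{T:positivity} together with continuity of $\Phi$ furnishes a uniform bound $\Phi(u_c(t))\geq\delta>0$. Precompactness also forces the solution to stay concentrated near $x(t)$ uniformly in $t$. I would then integrate a spatially truncated form of the virial identity \eqref{virial_ID} over $[0,T]$: the right-hand side is bounded below by $2\delta T$, while concentration renders the cutoff errors negligible for a suitable truncation radius, and the $H^1$ bound together with the control on $x(t)$ keeps the truncated virial functional $o(T)$. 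Letting $T\to\infty$ yields a contradiction, so $u_c=0$. This contradicts the existence of a non-scattering datum, and therefore every $u_0\in\RRR$ scatters in both time directions.

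I expect the main difficulty to lie in the construction and control of the critical element: showing that the profile decomposition and stability theory of \cite{DuyckaertsPhan24P} are compatible with the nonlinear constraints defining $\RRR$, so that profiles remain in the scattering region and inherit $\Phi>0$, and controlling the translation parameter so that $\abs{x(t)}=o(t)$, which is precisely what makes the truncated virial argument close. A subtlety is that reducing to zero momentum via a Galilean boost alters the energy, so this reduction must be reconciled with the energy threshold $\mathsf{e}$. Finally, in the energy-critical endpoint $s_0=1$ the perturbation theory itself is delicate and ultimately rests on the scattering of the single-power critical equation \eqref{NLSh}.
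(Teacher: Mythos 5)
Your proposal follows essentially the same route as the paper: a compactness/rigidity argument in $H^1$, with the profile decomposition and stability theory of \cite{DuyckaertsPhan24P} producing a critical element with precompact trajectory modulo translations, and the positivity of the virial functional on $\RRR$ (Theorem \ref{T:positivity}) closing the rigidity step. The only substantive point you flag but leave open --- reconciling the Galilean reduction to zero momentum with the energy threshold $\mathsf{e}$ --- is resolved in the paper not by boosting the solution but by strengthening the variational statement: since $E(e^{ix\cdot\xi}u)=E(u)-2\xi\cdot P(u)+|\xi|^2M(u)$ is minimized at $\xi=P(u)/M(u)$ with value $E(u)-|P(u)|^2/M(u)\leq E(u)$ and the mass is unchanged, Theorem \ref{thm3} gives $\Phi(u)-|P(u)|^2/M(u)>0$ on $\RRR$, which is exactly what the momentum-corrected rigidity statement (Proposition \ref{pro1}, imported from \cite{DuyckaertsPhan24P}) contradicts; your truncated-virial argument with $|x(t)|=o(t)$ is precisely the proof of that proposition, which the paper does not redo. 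A minor organizational difference is that the paper runs the induction on the uniform Sobolev-norm bound $A$ over the fixed set $\RRR_\eta$ (Theorem \ref{T:scatt3}) rather than on a mass--energy level, but this does not change the substance of the argument.
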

Theorem \ref{T:scatt_intro1} was proved in the case $d=3$, $p_0=4$ (thus $s_0=1$), $p_1=2$ in \cite{KiOhPoVi17}. See also \cite[Theorem 1.3]{TaViZh07} where a scattering result for small mass was obtained for general double power-nonlinearities with $s_0\leq 1$.

We note that the ground state solution $e^{it\omega_c}Q_{\omega_c}$ provides a non-scattering solution of \eqref{NLS} which is bounded in the space $H^{s_0}\cap H^1$, with mass $m_c$. However, as in \cite{KiOhPoVi17}, the first critical mass $\tilde{m}_c$ is not given by this minimal ground state mass, but by a multiple of it (corresponding to the mass of a rescaled solution of \eqref{Ell}). It is not clear whether this restriction is due the method, based on the positivity of $\Phi(u)$, or if there exists nonscattering, bounded solutions of \eqref{NLS} with $\tilde{m}_c<\int |u_0|^2<m_c$.

In the case $s_0>1$, the equations \eqref{NLS} and \eqref{NLSh} are no longer well-posed in $H^1$, but rather in the Sobolev space $H^{s_0}$ (see \cite[Section 2]{DuyckaertsPhan24P}), at least under Assumption \ref{Assum:NL}.  The homogeneous equation \eqref{NLSh} is also well-posed in the homogeneous Sobolev space $\dot{H}^{s_0}$ (see \cite{CaWe90} and \cite[Remark 2.20]{DuyckaertsPhan24P}).


Furthermore the global well-posedness is not guaranteed. Indeed Merle, Rapha\"el, Rodnianski and Szeftel \cite{MeRaRoSz22a} have constructed solutions of the homogeneous equation \eqref{NLSh} for some values of $p_0$, $d$ (with $s_0>1$), that blow up in finite time. It is very likely that such a phenomenon persists in the case of a double power nonlinearity.

It is known however that for many values of $p_0$ a solution of \eqref{NLSh} that remains bounded in the critical Sobolev space are global and scatter.
We thus consider the following property:
\begin{property}
\label{Proper:bnd}
Let $A_0\in (0,\infty]$.
For any solution $u$ of \eqref{NLSh} with initial data in $\dot{H}^{s_0}$, if
\begin{equation}
  \label{bound_Hs0}
  \limsup_{t\to T_{+}(u)} \|u(t)\|_{\dot{H}^{s_0}}<A_0.
\end{equation}
Then $T_{+}(u)=+\infty$ and $u$ scatters for positive times in $\dot{H}^{s_0}$, i.e. there exists $v_0\in \dot{H}^{s_0}$ such that
$$\lim_{t\to\infty} \left\|u(t)-e^{it\Delta}v_0\right\|_{\dot{H}^{s_0}}=0.$$
\end{property}
Property \ref{Proper:bnd} holds for small $A_0$ by standard small data theory. It is conjectured that it holds for $A_0=\infty$. It was proved
\cite{CoKeStTaTa08}, \cite{RyVi07} and \cite{Visan07} in the case $s_0=1$ (where the bound \eqref{bound_Hs0} is automatically given by conservation of the energy). The study of Property \ref{Proper:bnd} for other critical exponents was initiated in \cite{KeMe10} where it was proved when $d=3$, $p_0=2$ (thus $s_0=1/2$), for radial solutions. It was later proved in many other cases:  see e.g. \cite{KillipVisan10}, \cite{MiXuZh14}, \cite{LuZheng17}, \cite{DoMiMuZh17} for $s_0>1$ in dimension $d\geq 4$ (with technical restriction if $d\geq 7$).
\begin{theorem}
\label{T:scatt_intro2}
Assume $d\geq 3$, $p_0>\frac{4}{d-2}$ and Assumption \ref{Assum:NL} holds.
Let $A_0>0$ such that Property \ref{Proper:bnd} holds. Let $u$ be a solution of \eqref{NLS} such that $u_0\in \RRR$ and $u$ satisfies \eqref{bound_Hs0}. Then $u$ is global and scatters forward in time.
\end{theorem}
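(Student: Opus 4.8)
The plan is to run the concentration--compactness/rigidity scheme of Kenig--Merle \cite{KeMe06}, in the form adapted to the double power equation \eqref{NLS} in our companion paper \cite{DuyckaertsPhan24P}, using Theorem~\ref{T:positivity} for the variational input and Property~\ref{Proper:bnd} to close the concentrating profiles. I would begin with a reduction, since \eqref{bound_Hs0} is only a $\limsup$ bound as $t\to T_+(u)$, whereas the scheme wants a uniform bound $\sup_{[0,T_+)}\|u\|_{\dot H^{s_0}}<A_0$. Choose $T_1<T_+(u)$ with $\sup_{[T_1,T_+(u))}\|u(t)\|_{\dot H^{s_0}}<A_0$, which is possible by \eqref{bound_Hs0}. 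Since $M$ and $E$ are conserved and \eqref{NLS} propagates the $H^{s_0}\cap H^1$ regularity, $u(T_1)$ still lies in $\RRR$ (its mass is unchanged, hence in $(0,m_c)$, and $E(u(T_1))=E(u_0)<\mathsf{e}(M(u_0))$). Thus it suffices to prove: every solution with data in $\RRR$ and $\sup_{[0,T_+)}\|u\|_{\dot H^{s_0}}<A_0$ is forward-global and forward-scattering.

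For the induction on the critical size, let $SC(A)$ denote that statement with $A_0$ replaced by $A\in(0,A_0]$. Small-data theory in $\dot H^{s_0}$ gives $SC(A)$ for small $A$; I would set $A_c=\sup\{A\le A_0:SC(A)\text{ holds}\}$ and aim at $A_c=A_0$. Assuming instead $A_c<A_0$, I would take nonscattering solutions with $\sup_t\|u_n\|_{\dot H^{s_0}}\downarrow A_c$ and feed $\{u_n(0)\}$ into the $\dot H^{s_0}$ linear profile decomposition and the nonlinear profiles/stability theory of \cite{DuyckaertsPhan24P}. The decisive, double-power-specific point is the treatment of the concentrating profiles: those at small scales ($\lambda_n\to\infty$) solve, in the rescaled limit, the \emph{single power} equation \eqref{NLSh}, since $|u|^{p_1}u$ is lower order under the $\dot H^{s_0}$-scaling, and as their critical norm is $<A_c<A_0$, Property~\ref{Proper:bnd} makes them scatter with controlled scattering norm; profiles at bounded scales solve the full equation \eqref{NLS} with data inheriting membership in $\RRR$ (again by conservation of $M$, $E$ and the structure of \eqref{defRRR}), so the induction hypothesis $SC(A_c^-)$ applies. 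A single profile must then carry the whole critical size, and the standard argument produces a critical element $u_c\not\equiv0$, with data in $\RRR$, $\sup_t\|u_c\|_{\dot H^{s_0}}=A_c$, that does not scatter and whose forward trajectory is precompact in $\dot H^{s_0}$ modulo scaling and translation, i.e. there are $\lambda(t)>0$, $x(t)\in\R^d$ with $\{\lambda(t)^{-s_0}u_c(t,\lambda(t)^{-1}(\cdot-x(t)))\}$ precompact.

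For the rigidity step, conservation of $M(u_c)$ and $E(u_c)$ keeps $u_c(t)\in\RRR$ for all $t$, so Theorem~\ref{T:positivity} gives $\Phi(u_c(t))>0$, and precompactness of the trajectory together with $u_c\not\equiv0$ upgrades this to a uniform $\Phi(u_c(t))\ge c>0$. I would then run a truncated form of the virial identity \eqref{virial_ID}, $\frac{d}{dt}\Im\int \chi_R\,x\cdot\nabla u_c\,\overline{u_c}=2\Phi(u_c)+O(\text{error}_R)$, with a cutoff $\chi_R$ at the scale dictated by $\lambda(t)$ and $x(t)$; precompactness controls the error and boundary terms while the left-hand side stays bounded, so integrating in time forces $cT\lesssim 1$, a contradiction. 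Hence $u_c\equiv0$, so $A_c=A_0$ and the theorem follows via the reduction above.

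The main obstacle I expect is this rigidity step. In contrast with the energy-critical case $s_0=1$ of \cite{KiOhPoVi17}, there is here no conserved quantity at the critical $\dot H^{s_0}$ scaling, so controlling the modulation parameter $\lambda(t)$ (ruling out $\lambda(t)\to0$ or $\infty$, i.e. self-similar-type concentration) and making the truncated virial argument quantitative is delicate. The leverage comes from the double power structure: the subcritical conserved mass furnishes a fixed length scale which, together with the energy constraint, underlies the coercivity $\Phi\ge c>0$ on the compact trajectory provided by Theorem~\ref{T:positivity}. Verifying that these bounds tame the scale and the virial error terms is the crux of the argument.
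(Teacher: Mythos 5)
Your overall scheme matches the paper's: reduce to a uniform $\dot H^{s_0}$ bound, run an induction on the critical size, use a profile decomposition in which concentrating profiles are closed by Property \ref{Proper:bnd} and non-concentrating ones by the induction hypothesis together with membership in $\RRR$, and finish with a variational rigidity step based on the positivity of $\Phi$. However, the rigidity step as you describe it has two genuine gaps, both of which the paper's argument is structured precisely to avoid.

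First, the critical element is compact modulo \emph{spatial translation only}: there is no scaling parameter $\lambda(t)$. This is exactly the payoff of the step you already carried out. In the inhomogeneous $H^{s_0}$ profile decomposition, with the mass pinned by conservation and the constraint $\int|\varphi|^2\le m_c-\eta$, every concentrating profile is a solution of the homogeneous equation \eqref{NLSh} with $\dot H^{s_0}$ size below $A_c<A_0$, hence scatters by Property \ref{Proper:bnd}; so the unique surviving profile is non-concentrating and the critical element is compact in $H^{s_0}\cap H^1$ up to translations alone (this is Claim \ref{Cl:compactness} and Step 4 of its proof). By reintroducing $\lambda(t)$ you weaken the compactness statement and then correctly flag the control of $\lambda(t)$, in the absence of a conserved quantity at critical regularity, as ``the crux'' --- but you leave it unresolved, and this is exactly the part of an energy-supercritical Kenig--Merle argument that cannot be waved through. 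The paper never faces it. Second, your truncated virial computation ignores the momentum: a translation-compact solution may have $x(t)$ drifting linearly (Galilean travelling-wave behaviour), and then the localized virial quantity and its boundary terms are not controlled by $\Phi(u_c)\ge c>0$ alone. The rigidity input actually used, Proposition \ref{pro1}, concludes $\min_t\left|\Phi(u(t))-|P(u)|^2/M(u)\right|=0$, and the matching variational statement, Theorem \ref{thm3}, is the strict positivity of $\Phi(u)-|P(u)|^2/M(u)$ on $\RRR$, obtained from Theorem \ref{thm90} by optimizing over the Galilean boosts $e^{ix\cdot\xi}u$. Using only $\Phi(u_c)>0$ does not close the argument; one must either boost to zero momentum first or work with the momentum-corrected functional. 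Both gaps are fixable with ingredients already in the paper, but as written your rigidity step does not go through.
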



The proofs of Theorem \ref{T:scatt_intro1} and \ref{T:scatt_intro2} is by contradiction. One of the tool needed for this method is a profile decomposition adapted to equation \eqref{NLS}, which was constructed in \cite{DuyckaertsPhan24P} in a more general context. Using this profile decomposition, one can show that if the scattering does not hold, there exists a nonzero global solution $u_c$, of \eqref{NLS} such that there exists $x(t)$ such that
$$K=\left\{u_c(t,\cdot+x(t)),\; t\in \R\right\}$$
has compact closure in $H^{s_0}\cap H^1$, and
$\forall t,\quad u_c(t)\in \RRR$.
The second step of the argument is to prove that $0$ is the only solution $u_c$ (with mass and energy in the considered domain) such that $K$ defined above has compact closure in $H^1\cap H^{s_0}$. For this we use a general rigidity results, proved in \cite{DuyckaertsPhan24P}, and that extends the standard rigidity result (see \cite{KeMe06} for radial (NLS) equation, \cite{DuHoRo08} for extension to nonradial cases, and \cite{KiOhPoVi17} for the cubic/quintic double power nonlinearity):
\begin{proposition}\label{pro1}
With the assumptions above, let $u$ be a solution of \eqref{NLS} defined on $[0,\infty)$ such that there exists $x(t)$ defined for $t\in [0,\infty)$ with
\begin{equation}
\label{defK}
K=\{u(t,x+x(t)); t\geq 0\}
\end{equation}
has compact closure in $H^{s_0}\cap H^1$. Then
\begin{equation}
\label{eqB12}
\min_{t\geq 0}\left|\Phi(u(t)) - \frac{|P(u)|^2}{M(u)}\right|=0.
\end{equation}
\end{proposition}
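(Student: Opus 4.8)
The plan is to use the Galilean invariance of \eqref{NLS} to absorb the momentum correction and then to run a truncated virial argument. First I would boost $u$ to zero momentum: set $\xi=-P(u)/M(u)$ and
$$v(t,x)=e^{i(x\cdot\xi-|\xi|^2t)}u(t,x-2\xi t),$$
which is again a solution of \eqref{NLS} with $M(v)=M(u)$ and $P(v)=P(u)+\xi M(u)=0$. Since the potential terms of $\Phi$ are invariant under the boost while $\int|\nabla v|^2=\int|\nabla u|^2+2\xi\cdot P(u)+|\xi|^2M(u)$, one gets the pointwise-in-time identity $\Phi(v(t))=\Phi(u(t))-|P(u)|^2/M(u)$. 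Moreover $v(t,\cdot+\tilde x(t))$, with $\tilde x(t)=x(t)+2\xi t$, equals $u(t,\cdot+x(t))$ multiplied by the fixed modulation $e^{i(\cdot)\cdot\xi}$ and a unimodular constant; as multiplication by $e^{ix\cdot\xi}$ is bounded on $H^{s_0}\cap H^1$, the boosted trajectory is again precompact there. Thus it suffices to show that any solution $v$ with $P(v)=0$ and precompact trajectory satisfies $\inf_{t\ge0}|\Phi(v(t))|=0$.

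I would argue by contradiction, assuming $|\Phi(v(t))|\ge\delta>0$ for all $t\ge0$. Since $\Phi$ is continuous on $H^{s_0}\cap H^1$ and $t\mapsto v(t)$ is continuous, $\Phi(v(\cdot))$ has constant sign, say $\Phi(v(t))\ge\delta$ (the other sign is symmetric). Precompactness gives $\sup_t\|v(t)\|_{H^{s_0}\cap H^1}<\infty$ and tightness: for every $\eps>0$ there is $R_\eps$ with $\int_{|x-\tilde x(t)|>R_\eps}\big(|\nabla v|^2+|v|^2+|v|^{p_0+2}+|v|^{p_1+2}\big)\,dx<\eps$ for all $t$. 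Introduce the truncated virial $z_R(t)=\Im\int\nabla\varphi_R\cdot\nabla v\,\overline v\,dx$, where $\varphi_R(x)=R^2\varphi(x/R)$ with $\varphi(y)=|y|^2/2$ for $|y|\le1$ and $\varphi$ constant for $|y|\ge2$. Then $|z_R(t)|\le CR$ uniformly in $t$, while the computation underlying \eqref{virial_ID} gives $\frac{d}{dt}z_R(t)=2\Phi(v(t))+A_R(t)$, where $A_R(t)$ is supported in $\{|x|>R\}$ and controlled by the tail integral above.

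The remaining ingredient, which I expect to be the main obstacle, is the sublinear control of the translation parameter: using $P(v)=0$ one must prove $|\tilde x(t)|=o(t)$ as $t\to\infty$. This is the standard nonradial difficulty, obtained by differentiating a truncated first moment $\int\chi_R(x)\,x\,|v|^2\,dx$, whose derivative converges to $2P(v)=0$, and combining the resulting smallness of the averaged speed with tightness to pin the mass near $\tilde x(t)$. Granting it, put $\eta(T)=\sup_{t\le T}|\tilde x(t)|/T$, so $\eta(T)\to0$, and for fixed large $T$ take $R=R(T)=R_\eps+\eta(T)T$. Then $\{|x|>R\}\subset\{|x-\tilde x(t)|>R_\eps\}$ for all $t\le T$, whence $|A_R(t)|<C\eps$ on $[0,T]$. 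Integrating on $[0,T]$ yields $z_R(T)-z_R(0)\ge(2\delta-C\eps)T\ge\delta T$ for $\eps$ small, while the a priori bound forces $|z_R(T)-z_R(0)|\le2CR=2C(R_\eps+\eta(T)T)$. Dividing by $T$ and letting $T\to\infty$ gives $\delta\le 2C\limsup_{T\to\infty}(R_\eps/T+\eta(T))=0$, a contradiction. Hence $\inf_{t\ge0}|\Phi(v(t))|=0$, which is exactly \eqref{eqB12}; precompactness shows this infimum is attained along a sequence of times, giving the stated minimum.
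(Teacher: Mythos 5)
Your overall strategy is sound, and it is worth noting that the paper itself does not prove Proposition \ref{pro1} in this article: the result is imported wholesale from \cite{DuyckaertsPhan24P}, where it is established as a general rigidity theorem following the scheme of \cite{DuHoRo08} and \cite{KiOhPoVi17}. Your reconstruction is exactly that scheme: a Galilean boost $v(t,x)=e^{i(x\cdot\xi-|\xi|^2t)}u(t,x-2\xi t)$ with $\xi=-P(u)/M(u)$ to reduce to zero momentum (your identities $P(v)=0$ and $\Phi(v(t))=\Phi(u(t))-|P(u)|^2/M(u)$ are correct, and precompactness is preserved since modulation by $e^{ix\cdot\xi}$ and multiplication by unimodular constants are continuous on $H^{s_0}\cap H^1$), followed by the localized virial identity built on \eqref{virial_ID}, the $O(R)$ a priori bound on $z_R$, and the division by $T$. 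So this is not a different route; it is the intended one.

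The one place where your argument is a sketch rather than a proof is, as you yourself flag, the sublinearity $|\tilde x(t)|=o(t)$. Two points deserve care there. First, the derivative of the truncated first moment $\int\chi_R(x)\,x\,|v|^2\,dx$ is not $2P(v)$ on the nose but $2P(v)$ plus tail terms supported where $\chi_R$ is not constant; to conclude you must take $R$ large \emph{relative to} $\sup_{t\le T}|\tilde x(t)|$ so that, by tightness around $\tilde x(t)$, those tails are uniformly small on $[0,T]$ --- and then compare the resulting bound $o(T)+O(R)$ on the increment of the moment with the lower bound $c\,|\tilde x(T)|$ that localization of the mass near $\tilde x(T)$ forces; the contradiction argument has to be run with this self-referential choice of $R$, which is the genuinely delicate bookkeeping. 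Second, $x(t)$ is only assumed to exist, so before any of this you should replace it by a locally bounded (or piecewise constant) modulation parameter, which compactness permits; otherwise $\eta(T)=\sup_{t\le T}|\tilde x(t)|/T$ need not even be finite. Finally, note that \eqref{eqB12} is stated with a minimum but is only ever used in the paper through the existence of a sequence of times along which the quantity tends to $0$, so your closing remark about attainment is consistent with the intended meaning. With the sublinearity lemma written out in full, your proof would be complete and would coincide with the one in \cite{DuyckaertsPhan24P}.
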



To conclude the proof of Theorems \ref{T:scatt_intro1} and \ref{T:scatt_intro2}, it remains to prove that
$$\Phi(u) - \frac{|P(u)|^2}{M(u)}>0,$$
for $u\in \RRR$. This follows easily from Theorem \ref{T:positivity} considering all the $e^{ix\cdot\xi_0}u$ and optimizing in $\xi_0$.

 The mathematical study of (NLS) equation with a double-power nonlinearity was initiated in \cite{Zhang06}, in dimension $3$, where the author investigated the global well-posedness, scattering and blow-up phenomena in the case $p_0=4$. This includes in particular a scattering result for small mass, in the spirit of Theorem \ref{T:scatt_intro2}, but without explicit mass. Similar results were obtained in \cite{TaViZh07}, in general dimension $d$ in the energy-critical and supercritical setting $s_0\leq 1$. As already mentioned, Theorem \ref{T:scatt_intro2} was proved in \cite{KiOhPoVi17} in the particular case $d=3$, $p_0=4$, $p_1=2$. The scattering was extended to a slightly larger region in \cite{KillipVisan21}. In \cite{MiXuZh13}, the authors showed that solutions of a specific (NLS) scatter in both time directions by giving a new radial profile decomposition. 

The problem with $p_1=\frac{4}{d}$, $p_0<\frac{4}{d-2}$ was considered in \cite{Cheng20}, \cite{Murphy21}, where the author investigated scattering below or at the mass of the ground-state for the mass-critical homogeneous equation. See also \cite{CaCh21} which considers the
case $(p_0,p_1)=(4,2)$ in space dimensions $1$, $2$ and $3$.


We conclude this introduction with an outline of the paper. In Section \ref{S:preliminary}  we recall results from \cite{DuyckaertsPhan24P} in the particular case of equation \eqref{NLS}. This includes a well-posedness theory, as well as a profile decomposition adapted to \eqref{NLS} and a quantitative version of Property \ref{Proper:bnd}. In Section \ref{S:var}, we prove Theorem \ref{T:positivity} on the positivity of the virial functional. In Section \ref{S:scattering} we prove our scattering results Theorems \ref{T:scatt_intro1} and \ref{T:scatt_intro2}.

\section*{Acknowledgment}

The second author is supported by Post-doc fellowship from Labex MME-DII: SAIC/2022 No 10078.

\section{Preliminaries}
\label{S:preliminary}
\subsection{Cauchy theory}
This section is concerned with the Cauchy and stability theory for the equation \eqref{NLS}. We refer to \cite{DuyckaertsPhan24P} for the proofs.
We start with a few notations.

For each $q\geq 1$, we let $q'$ be the conjugate exponent given by
$
\frac{1}{q}+\frac{1}{q'}=1.
$

If $m$ is a complex valued function on $\R^d$, we define by $m(\nabla)$ the Fourier multiplier with symbol $m(\xi)$, i.e. $\widehat{m(\nabla )u}=m(\xi)\hat{u}(\xi)$, where $\hat{u}$ is the Fourier transform of $u$.
For a multi-index $\alpha=(\alpha_1,\alpha_2,...,\alpha_d)$, denote
$
D^{\alpha}=\partial_{x_1}^{\alpha_1}\cdot\cdot\cdot \partial_{x_d}^{\alpha_d}$ and $|\alpha|=\sum_{i=1}^d|\alpha_i|.$

If $X$ is a vector space, $(u,v)\in X^2$, we will make a small abuse of notation, denoting $\|(u,v)\|_{X}=\|u\|_X+\|v\|_X$.

We will consider the following function spaces:
$$S^0(I)=\begin{cases}
L^{\infty}\left(I,L^2(\R^d)\right)\cap L^{2}\left(I,L^{\frac{2d}{d-2}}(\R^d)\right)& \text{ if }d\geq 3\\
L^{\infty}\left(I,L^2(\R^2)\right)\cap L^{q_2}\left(I,L^{r_2}(\R^2)\right)& \text{ if }d=2 \\
L^{\infty}\left(I,L^2(\R)\right)\cap L^{4}\left(I,L^{\infty}(\R)\right)& \text{ if }d=1,
         \end{cases}
$$
where when $d=2$, $(q_2,r_2)$ is an admissible pair with $q_2>2$ close to $2$, and 
\begin{equation*}
N^0(I)= \begin{cases}
L^{1}\left(I,L^2(\R^d)\right)+ L^{2}\left(I,L^{\frac{2d}{d+2}}(\R^d)\right)& \text{ if }d\geq 3\\
L^{1}\left(I,L^2(\R^2)\right) + L^{q_2'}\left(I,L^{r_2'}(\R^2)\right)& \text{ if }d=2 \\
L^{1}\left(I,L^2(\R)\right)+ L^{4/3}\left(I,L^{1}(\R)\right)& \text{ if }d=1,
         \end{cases}
\end{equation*}
By Strichartz estimates (\cite{Strichartz77}, \cite{GiVe85} \cite{KeTa98}), 
$$ \|u\|_{S^0(I)}\lesssim \|u_0\|_{L^2}+\|f\|_{N^0(I)}$$
for any solution $C^0(I,\R)$ of 
$i\partial_tu +\Delta u=f$ on $I\ni 0$ with initial data $u_0$ at $t=0$.

We also define the following Strichartz spaces, and dual Strichartz spaces:
\begin{equation*}
W^0(I)=
L^{\frac{2(2+d)}{d}}\left(I\times\R^d\right)
 \quad Z^0(I)=\left(W^0(I)\right)'=L^{\frac{2(d+2)}{d+4}}(I\times \R^d),
\end{equation*}
so that we have $S^0(I)\subset W^0(I)$ and $Z^0(I)\subset N^0(I)$ (with continous embedding). We denote
\begin{equation}
\label{defX}
X(I)=L^{p_0(d+2)/2}_{t,x}(I\times\R^d)\cap L^{p_1(d+2)/2}_{t,x}(I\times\R^d).
\end{equation}
For $s\geq 0$, we denote
\begin{gather*}
\norm{u}_{S^s(I)}=\norm{\langle\nabla\rangle^{s} u}_{S^0(I)},\quad \norm{u}_{\dot{S}^s(I)}=\norm{|\nabla |^{s} u}_{S^0(I)}
\end{gather*}
and define similarly $W^s(I)$, $\dot{W}^s(I)$,  $N^s(I)$, $\dot{N}^s(I)$, $Z^s(I)$ and $\dot{Z}^s(I)$.

By Sobolev inequalities, $S^{s_0}(I)$ is continuously embedded in $X(I)$.


\begin{definition}
Let $0\in I$ be an interval. By definition, a solution $u$ to \eqref{NLS} on $I$, with initial data in $H^{s_0}$ is a function $u\in C(I,H^{s_0}) $ such that for all $K \subset I$ compact, $u\in S^{s_0}(K)$ and $u$ satisfies the following Duhamel formula
\begin{equation}
 \label{Duhamel}
u(t)=e^{it\Delta}u_0-i\int_0^t e^{i(t-\tau)\Delta}g(u)(\tau)\,d\tau,
 \end{equation} 
for all $t\in I$, where $g(u)=|u|^{p_0}u-|u|^{p_1}u$.
\end{definition}
Noting that the assumption $p_0>\frac{4}{d}$ implies $p_0(d+2)/2>2$ (and $p_0(d+2)/2\geq 6$ if $d=1$), we can choose $q_0$ such that $(p_0(d+2)/2,q_0)$ is an admissible pair. By Sobolev inequality and the definitions of $s_0$, $q_0$, one can check
\begin{equation}
 \label{inclusion}
 \|u\|_{L^{\frac{p_0(d+2)}{2}}_x}\lesssim \left\| |\nabla |^{s_0} u\right\|_{L^{q_0}}
 \end{equation} 
and thus
\begin{equation}
 \label{Ss_X}
 \|u\|_{L^{\frac{p_0(d+2)}{2}}(I\times \R^d)}\lesssim \||\nabla |^{s_0}u\|_{S^{0}(I)},\quad \|u\|_{X(I)}\lesssim \|u\|_{S^{s_0}(I)}.
\end{equation}

The following local well-posedness statement is contained in \cite[Section 2]{DuyckaertsPhan24P}. See in particular Proposition 2.12, Remarks 2.13, 2.14 and 2.15 and Lemma 2.16 there:
\begin{theorem}\label{T:local wellposed}
Let $u_0\in H^{s_0}$. Let $p_0$, $p_1$ such that Assumption \ref{Assum:NL}  holds. Then there exists a unique maximal solution of $u$ to \eqref{Duhamel}, defined on an interval $(T_-,T_+)\ni 0$, such that for any interval $I$ such that $\overline{I}\subset (T_-,T_+)$, one has $u\in S^{s_0}(I)$.
Furthermore we have the following:
\begin{itemize}
 \item $T_+<\infty \Longrightarrow u\notin X([0,T_{+})).$
 \item If $u\in X([0,\infty))$, then $u$ scatters as $t\to\infty$.
\end{itemize}
\end{theorem}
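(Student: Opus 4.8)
The plan is to build the solution by a contraction-mapping argument for the Duhamel operator
\[
 \Lambda_{u_0}(u)(t) := e^{it\Delta}u_0 - i\int_0^t e^{i(t-\tau)\Delta}g(u)(\tau)\,d\tau , \qquad g(u)=|u|^{p_0}u-|u|^{p_1}u,
\]
in a ball of $S^{s_0}(I)$ for a short interval $I\ni 0$, and then to promote the resulting local solution to the maximal-interval, blow-up and scattering statements by reusing the same nonlinear estimates together with the a priori control of the $X$-norm.

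The core of the argument is a pair of nonlinear estimates. Using the dual Strichartz inequality it suffices to control $g(u)$ in $N^{s_0}(I)$, and by the embedding $Z^0(I)\subset N^0(I)$ this in turn reduces to bounding $\norm{\langle\nabla\rangle^{s_0}(|u|^{p_j}u)}_{Z^0(I)}$ for $j\in\{0,1\}$. The fractional Leibniz and chain rules, followed by H\"older in space-time with the balance $\frac{d+4}{2(d+2)}=\frac{2}{d+2}+\frac{d}{2(d+2)}$, give
\[
 \norm{\langle\nabla\rangle^{s_0}(|u|^{p_j}u)}_{Z^0(I)} \lesssim \norm{u}_{L^{p_j(d+2)/2}(I\times\R^d)}^{p_j}\,\norm{\langle\nabla\rangle^{s_0}u}_{W^0(I)} \lesssim \norm{u}_{X(I)}^{p_j}\,\norm{u}_{S^{s_0}(I)},
\]
where the last step uses $S^0(I)\subset W^0(I)$ and the definition of $X(I)$. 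A parallel, derivative-free computation based on the pointwise bound $\big||z_1|^{p_j}z_1-|z_2|^{p_j}z_2\big|\lesssim(|z_1|^{p_j}+|z_2|^{p_j})|z_1-z_2|$ yields the difference estimate
\[
 \norm{g(u)-g(v)}_{N^0(I)}\lesssim \sum_{j\in\{0,1\}}\big(\norm{u}_{X(I)}^{p_j}+\norm{v}_{X(I)}^{p_j}\big)\,\norm{u-v}_{S^0(I)} .
\]
This is exactly where Assumption \ref{Assum:NL} enters: it ensures that $z\mapsto|z|^{p_j}z$ has enough regularity (it is $C^{\lceil s_0\rceil}$, or a polynomial in $z,\bar z$ when $p_j$ is an even integer) for the fractional chain rule to be applied at order $s_0$.

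With these in hand the contraction closes in the standard way. On the ball $B=\{\,\norm{u}_{S^{s_0}(I)}\le 2C\norm{u_0}_{H^{s_0}}\,\}$, the Strichartz estimate and the first nonlinear bound give $\norm{\Lambda_{u_0}(u)}_{S^{s_0}(I)}\le C\norm{u_0}_{H^{s_0}}+C\sum_{j}\norm{u}_{X(I)}^{p_j}\norm{u}_{S^{s_0}(I)}$; since $\norm{e^{it\Delta}u_0}_{X(I)}\to0$ as $|I|\to0$ and $\norm{u}_{X(I)}\lesssim\norm{u}_{S^{s_0}(I)}$, shrinking $I$ makes the superlinear terms negligible, so $\Lambda_{u_0}$ maps $B$ into itself and, by the difference estimate, is a contraction for the $S^0$ metric (on which $B$ is complete). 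The unique fixed point is the desired local solution; uniqueness on overlaps and a routine gluing/continuation argument then produce the maximal interval $(T_-,T_+)$.

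The blow-up and scattering criteria follow from the same estimates. If $u\in X([0,T_+))$ with $T_+<\infty$, one splits $[0,T_+)$ into finitely many subintervals on each of which $\norm{u}_{X}$ lies below the small-data threshold; iterating the Strichartz bound across them yields $\norm{u}_{S^{s_0}([0,T_+))}<\infty$, whence the local theory applied near $T_+$ extends $u$ beyond $T_+$ and contradicts maximality, so $T_+<\infty$ forces $u\notin X([0,T_+))$. If instead $u\in X([0,\infty))$, the same iteration gives $\norm{u}_{S^{s_0}([0,\infty))}<\infty$, so $v_0:=u_0-i\int_0^{\infty}e^{-i\tau\Delta}g(u)(\tau)\,d\tau$ converges in $H^{s_0}$ and satisfies $\norm{e^{it\Delta}v_0-u(t)}_{H^{s_0}}\to0$, i.e. scattering. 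I expect the main obstacle to be the higher-power nonlinear estimate at the critical regularity $s_0$ in the energy-supercritical range $s_0>1$, where the fractional chain rule must be carried to non-integer order above one while simultaneously fitting both powers $p_0$ and $p_1$ into the single space $X(I)$; this is precisely the technical point for which Assumption \ref{Assum:NL} is designed.
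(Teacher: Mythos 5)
Your overall route --- contraction for the Duhamel map in a ball of $S^{s_0}(I)$ using Strichartz, the fractional chain rule at order $s_0$ (this is where Assumption \ref{Assum:NL} enters), a derivative-free difference estimate so the contraction closes in the $S^0$ metric, and then the finite-subdivision argument for the blow-up and scattering criteria --- is the standard one, and is essentially what the paper imports from \cite{DuyckaertsPhan24P} (the paper itself gives no proof, only the citation). The nonlinear estimates and the H\"older balance $\frac{d+4}{2(d+2)}=\frac{2}{d+2}+\frac{d}{2(d+2)}$ are correct, and the blow-up/scattering part is fine.

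There is, however, one concrete flaw in the self-mapping step. You define $B=\{\norm{u}_{S^{s_0}(I)}\le 2C\norm{u_0}_{H^{s_0}}\}$ and claim the superlinear terms become negligible because $\norm{e^{it\Delta}u_0}_{X(I)}\to 0$ as $|I|\to 0$ and $\norm{u}_{X(I)}\lesssim\norm{u}_{S^{s_0}(I)}$. The second inequality only gives $\norm{u}_{X(I)}\lesssim\norm{u_0}_{H^{s_0}}$ uniformly over $B$, which is not small for large data; and no power of $|I|$ can be extracted, since the $L^{p_0(d+2)/2}_{t,x}$ component of $X(I)$ is exactly scaling-critical relative to $\dot H^{s_0}$ (the embedding \eqref{Ss_X} uses the full admissible pair with nothing to spare). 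So as written the estimate $\norm{\Lambda_{u_0}(u)}_{S^{s_0}(I)}\le C\norm{u_0}_{H^{s_0}}+C\sum_j\norm{u}_{X(I)}^{p_j}\norm{u}_{S^{s_0}(I)}$ does not return you to $B$, and the Lipschitz constant of $\Lambda_{u_0}$ is not small either. The standard repair is to build the smallness into the ball: take
$B=\{\norm{u}_{S^{s_0}(I)}\le 2C\norm{u_0}_{H^{s_0}}\ \text{and}\ \norm{u}_{X(I)}\le 2\delta\}$ with $\delta=\norm{e^{it\Delta}u_0}_{X(I)}$, and run the two estimates together: $\norm{\Lambda_{u_0}(u)}_{X(I)}\le\delta+C\sum_j(2\delta)^{p_j}\,2C\norm{u_0}_{H^{s_0}}\le 2\delta$ and $\norm{\Lambda_{u_0}(u)}_{S^{s_0}(I)}\le C\norm{u_0}_{H^{s_0}}+C\sum_j(2\delta)^{p_j}\,2C\norm{u_0}_{H^{s_0}}\le 2C\norm{u_0}_{H^{s_0}}$ once $\delta=\delta(\norm{u_0}_{H^{s_0}})$ is small, which is achieved by shrinking $I$ since $\norm{e^{it\Delta}u_0}_{X(\R)}<\infty$. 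With this modification (and noting $B$ remains closed for the $S^0$ metric by Fatou) the rest of your argument goes through unchanged; this is exactly the mechanism behind the Remark following the theorem, which locates the smallness in $\norm{e^{i\cdot\Delta}u_0}_{X(I)}$ rather than in $|I|$.
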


\begin{remark}
Let $I$ be an interval such that $0\in I$. If $\norm{e^{i\cdot\Delta}u_0}_{X(I)}<\delta$ is small enough, then $u$ is defined on $I$. By Strichartz, $\norm{e^{i\cdot\Delta}u_0}_{X(\R)}\lesssim \norm{u_0}_{H^{s_0}}<\infty$ which implies local well posedness of \eqref{Duhamel}.
\end{remark}
We next state the existence of wave operators for equation \eqref{NLS}:
\begin{proposition}\label{P:wave operators}
Let $u_0\in H^{s_0}$. Let $p_0$, $p_1$ such that Assumption \ref{Assum:NL}  holds. Let $v_0\in H^{s_0}$ and $v_L(t)=e^{it\Delta}v_0$. Then there exist a unique solution $u\in C^0((T_{-}(u),\infty),H^{s_0})$ of \eqref{NLS} such that
$$\lim_{t\to \infty}\|u(t)-v_L(t)\|_{H^{s_0}}=0.$$
\end{proposition}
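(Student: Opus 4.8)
The plan is to construct the desired solution by a fixed-point argument in a neighborhood of $t=+\infty$, and then to extend it to its maximal interval of existence by the Cauchy theory. Formally, if $u$ solves \eqref{NLS} on some $[T,\infty)$ and satisfies $\|u(t)-v_L(t)\|_{H^{s_0}}\to 0$, then writing the Duhamel formula between times $s$ and $t$, letting $s\to\infty$, and using that $e^{i(t-s)\Delta}$ is unitary on $H^{s_0}$, one obtains the integral equation
\begin{equation*}
u(t)=e^{it\Delta}v_0+i\int_t^\infty e^{i(t-\tau)\Delta}g(u)(\tau)\,d\tau=:\Psi(u)(t),\qquad g(u)=|u|^{p_0}u-|u|^{p_1}u.
\end{equation*}
I therefore look for a fixed point of $\Psi$ in a small ball of $S^{s_0}([T,\infty))$, for $T$ large.

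The first step is to set up the functional framework. By the Strichartz estimate recalled in Section \ref{S:preliminary}, $\|v_L\|_{S^{s_0}(\R)}\lesssim \|v_0\|_{H^{s_0}}<\infty$; since $S^{s_0}\hookrightarrow X$ and $\|v_L\|_{X(\R)}<\infty$, dominated convergence gives $\|v_L\|_{X([T,\infty))}\to 0$ as $T\to\infty$. The key analytic input is the nonlinear estimate established in \cite[Section 2]{DuyckaertsPhan24P} under Assumption \ref{Assum:NL}, of the shape
\begin{equation*}
\|g(u)\|_{N^{s_0}(I)}\lesssim \Big(\|u\|_{X(I)}^{p_0}+\|u\|_{X(I)}^{p_1}\Big)\|u\|_{S^{s_0}(I)},
\end{equation*}
together with the analogous estimate for the difference $g(u)-g(\tilde{u})$. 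Combined with $\|\Psi(u)\|_{S^{s_0}(I)}\lesssim \|v_0\|_{H^{s_0}}+\|g(u)\|_{N^{s_0}(I)}$, this shows that for $T$ large enough $\Psi$ maps a small ball of $S^{s_0}([T,\infty))$ into itself and is a contraction there. The Banach fixed-point theorem yields a unique solution $u\in S^{s_0}([T,\infty))$ of $u=\Psi(u)$.

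It remains to check the asymptotic condition and to extend and identify $u$. Applying Strichartz and the nonlinear estimate on $[t,\infty)$,
\begin{equation*}
\|u(t)-v_L(t)\|_{H^{s_0}}\lesssim \|g(u)\|_{N^{s_0}([t,\infty))}\longrightarrow 0\quad\text{as }t\to\infty,
\end{equation*}
since $\|u\|_{X([t,\infty))}\to 0$. By Theorem \ref{T:local wellposed}, $u$ extends to a maximal solution on an interval $(T_-(u),\infty)$, lying in $C^0((T_-(u),\infty),H^{s_0})$. For uniqueness, any other solution $\tilde{u}$ with $\|\tilde{u}(t)-v_L(t)\|_{H^{s_0}}\to 0$ satisfies $\Psi(\tilde{u})=\tilde{u}$ on some $[T',\infty)$; enlarging $T'$ so that both $u$ and $\tilde{u}$ lie in the contraction ball forces $u=\tilde{u}$ there, and the uniqueness part of Theorem \ref{T:local wellposed} propagates this to the whole common interval.

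The main obstacle is the nonlinear estimate controlling $g(u)$ in the $H^{s_0}$-based dual Strichartz norm when $s_0$ and the powers $p_j$ are not integers, which requires fractional Leibniz and chain-rule estimates and is exactly what Assumption \ref{Assum:NL} is designed to guarantee. Since these estimates are already available from \cite{DuyckaertsPhan24P}, the construction here reduces to the standard small-tail contraction scheme.
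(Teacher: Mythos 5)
The paper gives no proof of Proposition \ref{P:wave operators} and defers it to \cite{DuyckaertsPhan24P}; your argument is the standard construction of wave operators by a contraction mapping near $t=+\infty$ for the integral equation $u=\Psi(u)$, which is exactly the intended proof and is correct in all essentials (including the derivation of the asymptotic condition from $\|g(u)\|_{N^{s_0}([t,\infty))}\to 0$ and the backward extension and uniqueness via Theorem \ref{T:local wellposed}). One minor imprecision worth fixing: the fixed-point set cannot be a \emph{small} ball in $S^{s_0}([T,\infty))$ when $\|v_0\|_{H^{s_0}}$ is large, since $\|\Psi(u)\|_{S^{s_0}}\gtrsim\|v_L\|_{S^{s_0}}$; one should instead work in a ball of radius $\sim\|v_0\|_{H^{s_0}}$ for the $S^{s_0}$-norm intersected with a small ball for the $X$-norm (attainable because $\|v_L\|_{X([T,\infty))}\to 0$), which is precisely the two-norm structure your nonlinear estimate $\|g(u)\|_{N^{s_0}(I)}\lesssim(\|u\|_{X(I)}^{p_0}+\|u\|_{X(I)}^{p_1})\|u\|_{S^{s_0}(I)}$ is designed to exploit; the same remark is needed in the uniqueness step, where one must first invoke the small-$X$-tail local theory to place the competitor $\tilde{u}$ in $S^{s_0}([T',\infty))$ before identifying it as a fixed point.
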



We recall the following long time perturbation theory:
\begin{theorem}(see \cite{DuyckaertsPhan24P}[Theorem 2.19])\label{T:long time perturbation}
Let $A>0$. There exists constants $\eps(A)\in (0,1]$, $C(A)>0$ with the following properties.
Let $0\in I$ be an compact interval of $\R$ and $w$ be a solution of the following equation
\[
Lw=g(w)+e,
\] 
and $u_0\in H^{s_0}$ such that 
\begin{equation*}
\norm{w_0}_{H^{s_0}}+\norm{w}_{X(I)}\leq A,\quad
\norm{e}_{N^{s_0}(I)}+
\norm{u_0-w(0)}_{H^{s_0}}=\varepsilon\leq \eps(A).
\end{equation*}
Then the solution $u$ of \eqref{NLS} with initial data $u_0$ is defined on $I$ and satisfies
\begin{equation}
\label{bounduw}
\norm{u-w}_{S^{s_0}(I)}\leq C(A) \eps,\quad
\norm{u}_{S^{s_0}(I)}\leq C(A).
\end{equation}
\end{theorem}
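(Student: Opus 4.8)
The statement is a standard long-time perturbation (stability) result, so the plan is to reduce it to an a priori estimate on the difference $v:=u-w$ and then close a Strichartz fixed-point/continuity argument on a partition of $I$. First I would set $L=i\partial_t+\Delta$ and observe that, wherever the genuine solution $u$ exists, $v=u-w$ solves
\[
Lv=g(w+v)-g(w)-e,\qquad v(0)=u_0-w(0),\quad \norm{v(0)}_{H^{s_0}}\leq\eps .
\]
By the blow-up criterion of Theorem \ref{T:local wellposed} ($T_+<\infty\Rightarrow u\notin X([0,T_+))$) together with the embedding $\norm{v}_{X}\lesssim\norm{v}_{S^{s_0}}$ from \eqref{Ss_X}, it is enough to prove the a priori bound $\norm{v}_{S^{s_0}(I)}\leq C(A)\eps$ on any compact subinterval where $u$ is defined; the bound $\norm{u}_{S^{s_0}(I)}\leq C(A)$ and the extension of $u$ to all of $I$ then follow by a continuity argument using $\norm{w}_{X(I)}\leq A$.

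To obtain the a priori bound, I would first split $I$ into $N=N(A,\eta)$ consecutive subintervals $I_j$ on each of which $\norm{w}_{X(I_j)}\leq\eta$, where $\eta=\eta(A)>0$ is a small parameter fixed below; this is possible because $\norm{w}_{X(I)}\leq A$. On a single $I_j$, applying $\langle\nabla\rangle^{s_0}$ to the Duhamel formula for $v$ and using the Strichartz estimate reduces everything to the nonlinear bound
\[
\norm{g(w+v)-g(w)}_{N^{s_0}(I_j)}\lesssim\Big(\norm{w}_{X(I_j)}^{p_1}+\norm{w}_{X(I_j)}^{p_0}+\norm{v}_{S^{s_0}(I_j)}^{p_1}+\norm{v}_{S^{s_0}(I_j)}^{p_0}\Big)\norm{v}_{S^{s_0}(I_j)} .
\]
Taking $\eta$ small makes the $\norm{w}_{X(I_j)}^{p_j}$ factors small, while the $\norm{v}_{S^{s_0}(I_j)}^{p_j}$ factors are higher order and are controlled by a standard continuity (bootstrap) argument; this yields $\norm{v}_{S^{s_0}(I_j)}\leq C\big(\norm{v(t_j)}_{H^{s_0}}+\norm{e}_{N^{s_0}(I_j)}\big)$ as soon as $\norm{v(t_j)}_{H^{s_0}}$ and $\eps$ are small.

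It then remains to iterate over the finitely many subintervals. Since $\norm{v(t_{j+1})}_{H^{s_0}}\lesssim\norm{v}_{S^{s_0}(I_j)}$, the left-endpoint data grows at worst geometrically in $j$, and after the $N(A,\eta)$ steps one arrives at $\norm{v}_{S^{s_0}(I)}\leq C(A)\eps$; choosing $\eps(A)$ small enough guarantees that the smallness hypotheses needed at every step are met. The whole scheme is exactly the one used in \cite{DuyckaertsPhan24P}.

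The hard part will be the $H^{s_0}$-level nonlinear estimate displayed above. When $s_0$ is not an integer (in particular when $s_0>1$), bounding $g(w+v)-g(w)$ in $N^{s_0}$ requires differentiating the non-smooth nonlinearities $z\mapsto|z|^{p_j}z$ a fractional number of times, which is exactly where Assumption \ref{Assum:NL} (namely $\supent{s_0}<p_j$ or $p_j$ an even integer) and the fractional Leibniz/chain rules are needed; moreover the double-power structure forces one to carry both exponents — $p_0$, which is critical for $H^{s_0}$, and $p_1$, which is subcritical — simultaneously inside the single norm $X(I)=L^{p_0(d+2)/2}_{t,x}\cap L^{p_1(d+2)/2}_{t,x}$. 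The second delicate point is purely quantitative: ensuring that the geometric accumulation of the errors across the $N(A,\eta)$ subintervals produces a final constant $C(A)$ depending only on $A$.
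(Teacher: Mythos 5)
The paper itself gives no proof of this statement: it is quoted verbatim from \cite{DuyckaertsPhan24P}, Theorem 2.19, and your sketch is exactly the standard subdivision-plus-bootstrap stability argument used there, so the approach matches. One imprecision worth fixing: your displayed bound on $\norm{g(w+v)-g(w)}_{N^{s_0}(I_j)}$ omits the terms in which the $s_0$ derivatives fall on $w$ rather than on $v$; these carry a factor $\norm{\langle\nabla\rangle^{s_0}w}_{W^0(I_j)}$, so before running the difference estimate you must first establish the persistence bound $\norm{w}_{S^{s_0}(I)}\leq C(A)$ from the hypotheses $\norm{w_0}_{H^{s_0}}+\norm{w}_{X(I)}\leq A$ (by the same subdivision argument applied to $w$ alone), after which those cross terms are still acceptable because they come multiplied by the small factor $\norm{w}_{X(I_j)}^{p_j-1}\leq \eta^{p_j-1}$.
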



\begin{remark}
 \label{R:homogeneous}
The analog of Theorem \ref{T:local wellposed} for the equation \eqref{NLSh} where all the spaces are replaced by homogeneous spaces hold. Precisely, in the statement of Theorem \ref{T:local wellposed}, one can replace $X(I)$ by $L^{\frac{p_0(d+2)}{2}}(I\times \R^d)$, $H^{s_0}$ by $\dot{H}^{s_0}$, $S^{s_0}(I)$ by $\dot{S}^{s_0}(I)$.
\end{remark}

\subsection{Profile decomposition}
\label{sub:profile}
In this subsection we recall results on profile decomposition for the linear Schr\"odinger equation
 \begin{equation}
 \label{LS}
 i\partial_tu_L+\Delta u_L=0.
\end{equation}
and the nonlinear Schr\"odinger equation \eqref{NLS}. Profile decompositions for Schr\"odinger equations go back to \cite{MeVe98} (see also, among other works, \cite{Keraani01}, \cite{BeVa07} and \cite{Shao09}). Here, we follow the theory developed in Section 3 of our previous article \cite{DuyckaertsPhan24P} which is adapted to nonlinearities of the form $\pm |u|^{p_0}u+l.o.t.$.

We start by recalling the profile decomposition for sequences of solutions of \eqref{LS} such that the corresponding of  initial data is bounded in $H^{s_0}$.
We denote by $u_L(t)=e^{it\Delta}u_0$ the solution to this equation, with initial data  $u_L(0)=u_0\in H^{s_0}$
\begin{definition}
 \label{D:hom_profile}
 A linear $H^{s_0}$-\emph{profile}, in short \emph{profile}, is a sequence $(\varphi_{Ln})_n$, of solutions of \eqref{LS}, of the form 
 \begin{equation}
  \label{linear_profile}
 \varphi_{Ln}(t,x)=\frac{1}{\lambda_n^{\frac{2}{p_0}}}\varphi_L\left( \frac{t-t_n}{\lambda_n^2},\frac{x-x_n}{\lambda_n} \right), 
\end{equation}
 where $\varphi_L$ is a fixed solution of \eqref{LS} and $\Lambda_n=(\lambda_n,t_n,x_n)_n$ is a sequence in $(0,1]\times \R\times \R^d$ (called sequence of transformations) such that
 \begin{equation}
  \label{lim_time}
  \lim_{n\to\infty}\frac{-t_n}{\lambda_n^2}=\tau\in \R\cup \{\pm\infty\}.
 \end{equation} 
\end{definition}
\begin{definition}
\label{D:ortho_equiv}
We say that two sequence of transformations $\Lambda_n=(\lambda_n,t_n,x_n)$ and $M_n=(\mu_n,s_n,y_n)$ are \emph{orthogonal} when they satisfy
$$\lim_{n\to\infty} \frac{|t_n-s_n|}{\lambda_n^2}+\frac{|x_n-y_n|}{\lambda_n}+\left|\log\left( \frac{\lambda_n}{\mu_n} \right)\right|=\infty.$$ 
 We say that two $H^{s_0}$-profiles are \emph{orthogonal} when one of the two profiles is identically $0$ or when the corresponding sequences of transformations are orthogonal.
\end{definition}

Let $q_0=\frac{2d}{d-2s_0}=\frac{d}{2}p_0$ be the Lebesgue exponent such that the Sobolev embedding $\dot{H}^{s_0}\subset L^{q_0}$ holds. Then we have:
\begin{proposition}
\label{P:decomposition1}
For any bounded sequence $(u_{0,n})_n$ in $H^{s_0}$, there exists a subsequence (that we still denote by $(u_{0,n})_n$), 
a sequence $\left(\Big(\varphi_{Ln}^j\Big)_{n}\right)_{j\geq 1}$ of $H^{s_0}$ profiles such that 
\begin{equation}
 \label{limitwnJ}
\lim_{J\to\infty}\limsup_{n\to\infty}\left\|w_{Ln}^J\right\|_{X(\R)}+\left\| |\nabla|^{s_0}w_{Ln}^J\right\|_{W^0(\R)}+\|w_{Ln}^J\|_{L^{\infty}L^{q_0}}=0,
\end{equation} 
where 
\begin{equation}
\label{def_wLnJ} 
w_{Ln}^J=u_{Ln}-\sum_{j=1}^J \varphi_{Ln}^j,
\end{equation} 
$u_{L,n}=e^{it\Delta}u_{0,n}$.
 Furthermore, we have the Pythagorean expansions:
\begin{gather}
 \label{Pythagorean1}
 \forall J\geq 1,\quad \|u_{0,n}\|^2_{\dot{H}^{s_0}}=\sum_{j=1}^J \left\|\varphi_{L}^j(0)\right\|^2_{\dot{H}^{s_0}}+\left\|w_{Ln}^J(0)\right\|^2_{\dot{H}^{s_0}}+o(1),\quad n\to\infty\\
 \label{PythagoreanHs}
 \forall J\geq 1,\quad \|u_{0,n}\|^2_{\dot{H}^{s}}=\sum_{\substack{j\in \JJJ_{NC}\\1\leq j\leq J}} \left\|\varphi_{L}^j(0)\right\|^2_{\dot{H}^{s}}+\left\|w_{Ln}^J(0)\right\|^2_{\dot{H}^{s}}+o(1),\quad n\to\infty;\; 0\leq s<s_0.
\end{gather} 
We say that the sequence $\left((\varphi^j_{Ln})_n\right)_{j\geq 1}$ is a profile decomposition of $(u_{Ln})_n$
\end{proposition}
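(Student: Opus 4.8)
The statement is the standard $\dot H^{s_0}$-critical linear profile decomposition, and I would prove it by iterated profile extraction, in the now-classical style of \cite{MeVe98} and \cite{Keraani01} (and as carried out for equation \eqref{NLS} in \cite{DuyckaertsPhan24P}). The one point requiring care is that the extraction must be driven by the $\dot H^{s_0}$-\emph{critical} Strichartz norm, while the subcritical piece of \eqref{limitwnJ} is recovered afterwards. I first note that all three critical-scaling quantities appearing in \eqref{limitwnJ}, namely $\|\cdot\|_{L^{p_0(d+2)/2}_{t,x}}$, $\||\nabla|^{s_0}\cdot\|_{W^0}$, and $\|\cdot\|_{L^\infty L^{q_0}}$ with $q_0=\tfrac d2 p_0$, are invariant under the transformations \eqref{linear_profile} (the power $2/p_0=\tfrac d2-s_0$ is exactly the $\dot H^{s_0}$-invariant one). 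Hence it suffices to arrange that the critical space--time norm $\|e^{it\Delta}w_{Ln}^J(0)\|_{L^{p_0(d+2)/2}_{t,x}}$ vanishes in the iterated limit; the other two are then killed by the same extraction mechanism, since the inverse Strichartz inequality below produces a nontrivial bubble whenever any of them stays bounded below.

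The analytic heart is an \emph{inverse (refined) Strichartz inequality}. Writing $P_N$ for the Littlewood--Paley projection at dyadic frequency $N$, I would establish a refinement of the form
\[
\|e^{it\Delta}f\|_{L^{p_0(d+2)/2}_{t,x}(\R^{1+d})}\lesssim \|f\|_{\dot H^{s_0}}^{1-\theta}\,\Big(\sup_{N}\,N^{s_0}\,\|e^{it\Delta}P_Nf\|_{L^\infty_{t,x}}\Big)^{\theta}
\]
for some $\theta\in(0,1)$, obtained by interpolating the Strichartz--Sobolev estimate against a Bernstein bound. From this one extracts a bubble: if $(v_n)$ is bounded in $\dot H^{s_0}$ and $\limsup_n\|e^{it\Delta}v_n\|_{L^{p_0(d+2)/2}_{t,x}}\geq\eps$, then, after passing to a subsequence, there exist transformations $\Lambda_n=(\lambda_n,t_n,x_n)\in(0,1]\times\R\times\R^d$ satisfying \eqref{lim_time} and a nonzero $\psi\in\dot H^{s_0}$ such that the sequence obtained from $v_n$ by undoing $\Lambda_n$ converges weakly in $\dot H^{s_0}$ to $\psi$, with $\|\psi\|_{\dot H^{s_0}}\gtrsim\eta(\eps)$ for an explicit gauge $\eta$. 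The resulting bubble $\psi_{Ln}$ of the form \eqref{linear_profile} splits off almost orthogonally, $\|v_n\|_{\dot H^{s_0}}^2=\|\psi\|_{\dot H^{s_0}}^2+\|v_n-\psi_{Ln}(0)\|_{\dot H^{s_0}}^2+o(1)$.

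I would then iterate. Setting $w^0_{Ln}=u_{Ln}$, at each step I extract a new profile $\varphi^{j}_{Ln}$ from the current remainder and subtract it to define $w^{j}_{Ln}$, using a diagonal argument so that a single subsequence of $(u_{0,n})$ serves all $j$. Iterating the almost-orthogonal splitting $J$ times yields \eqref{Pythagorean1}, whence $\sum_j\|\varphi^j_L(0)\|_{\dot H^{s_0}}^2\leq\limsup_n\|u_{0,n}\|_{\dot H^{s_0}}^2<\infty$; in particular $\|\varphi^j_L(0)\|_{\dot H^{s_0}}\to0$. Since each extracted profile carries at least $\eta(\eps_J)^2$ of squared $\dot H^{s_0}$-norm, where $\eps_J:=\limsup_n\|w^J_{Ln}\|_{L^{p_0(d+2)/2}_{t,x}}$, summability forces $\eta(\eps_J)\to0$, hence $\eps_J\to0$ as $J\to\infty$; this gives \eqref{limitwnJ} for the critical norm, and for the other two critical-scaling quantities by the remark in the first paragraph. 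Finally, the subcritical component $\|w^J_{Ln}\|_{L^{p_1(d+2)/2}_{t,x}}$ (the exponent corresponding to regularity $s_1=\tfrac d2-\tfrac2{p_1}\in(0,s_0)$) is handled by interpolating this vanishing critical norm against the Strichartz norms that remain uniformly bounded because $(u_{0,n})$, and hence every remainder, is bounded in $H^{s_0}$.

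It remains to record orthogonality and the two Pythagorean expansions. Orthogonality of the transformations in the sense of Definition \ref{D:ortho_equiv} is automatic: were two extracted sequences of transformations not orthogonal, the corresponding weak limits could be recombined, contradicting that each remainder converges weakly to $0$ in the frame of every previously extracted profile. Almost-orthogonality of the $\dot H^{s}$ inner products of mutually orthogonal bubbles then gives \eqref{PythagoreanHs}. For the restriction to $\JJJ_{NC}$, I would use the exact scaling identity $\|\varphi^j_{Ln}(0)\|_{\dot H^s}=(\lambda^j_n)^{s_0-s}\|\varphi^j_L(0)\|_{\dot H^s}$ for $0\leq s<s_0$, which follows from unitarity of $e^{it\Delta}$ on $\dot H^s$ together with the $\dot H^{s_0}$-invariant normalization in \eqref{linear_profile}. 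Since $\lambda^j_n\in(0,1]$ and $s_0-s>0$, profiles whose scale degenerates ($\lambda^j_n\to0$) contribute nothing in $\dot H^s$, so only the indices with $\lambda^j_n$ bounded away from $0$, namely $\JJJ_{NC}$, survive in \eqref{PythagoreanHs}. I expect the genuine obstacle to be the inverse Strichartz inequality in this $\dot H^{s_0}$-critical, non-radial setting, where one must isolate a single frequency scale, time, and spatial center while simultaneously controlling the three distinct critical norms of \eqref{limitwnJ}; once it is established, the iteration, diagonal extraction, and orthogonality book-keeping are routine, the only equation-specific step being the interpolation transferring vanishing of the $L^{p_0(d+2)/2}$ norm to the $L^{p_1(d+2)/2}$ norm.
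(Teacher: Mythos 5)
Your overall strategy -- iterated bubble extraction driven by a refined Strichartz inequality, almost-orthogonal splitting of the $\dot H^{s_0}$ norm, and recovery of the subcritical $L^{p_1(d+2)/2}_{t,x}$ component of the $X$-norm by interpolation against the uniformly bounded $L^2$-level Strichartz norms -- is exactly the route the paper takes: the proposition is proved in the companion paper \cite{DuyckaertsPhan24P} by this standard construction, and the only addition made here is precisely your final interpolation remark for the $X(\R)$ norm. So the architecture of your argument is correct and matches the source.

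Two points need repair. First, the refined Strichartz inequality you propose to establish is false as written: the quantity $\sup_N N^{s_0}\|e^{it\Delta}P_N f\|_{L^\infty_{t,x}}$ is not invariant under the $\dot H^{s_0}$ scaling $f\mapsto \lambda^{-2/p_0}f(\cdot/\lambda)$ (it picks up a factor $\lambda^{-d/2}$), and by Bernstein it is not even controlled by $\|f\|_{\dot H^{s_0}}$, so the right-hand side cannot dominate the scale-invariant left-hand side. The correct concentration functional is $\sup_N N^{s_0-d/2}\|e^{it\Delta}P_N f\|_{L^\infty_{t,x}}=\sup_N N^{-2/p_0}\|e^{it\Delta}P_N f\|_{L^\infty_{t,x}}$, which is scale-invariant and satisfies $N^{s_0-d/2}\|e^{it\Delta}P_Nf\|_{L^\infty_{t,x}}\lesssim\|P_Nf\|_{\dot H^{s_0}}$; with this weight your interpolation scheme goes through. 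Second, in your derivation of \eqref{PythagoreanHs} you invoke the identity $\|\varphi^j_{Ln}(0)\|_{\dot H^s}=(\lambda^j_n)^{s_0-s}\|\varphi^j_L(0)\|_{\dot H^s}$ for concentrating profiles, but for $j\in\JJJ_C$ the profile $\varphi^j_L(0)$ is only known to lie in $\dot H^{s_0}$, and its $\dot H^s$ norm may be infinite; the correct statement is that a concentrating bubble is, up to an error small in $\dot H^{s_0}$ (which must then be tracked through the expansion), frequency-localized at scales $\gtrsim (\lambda^j_n)^{-1}\to\infty$, whence its contribution to any $\dot H^s$ norm with $s<s_0$ vanishes after a density/truncation argument. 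Relatedly, you should justify that the extracted scales can be normalized to lie in $(0,1]$ as required by Definition \ref{D:hom_profile}: bubbles with $\lambda_n\to\infty$ would concentrate $\dot H^{s_0}$ mass at frequencies tending to $0$, which is excluded by the uniform $L^2$ bound contained in the $H^{s_0}$ (inhomogeneous) boundedness of $(u_{0,n})_n$.
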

See Definition 3.4, Proposition 3.5, Lemma 3.7 in \cite{DuyckaertsPhan24P}, and also Remark 3.10 there for the Pythagorean expansion \eqref{PythagoreanHs}. The norm $X(\R)$ does not appear in the analog of \eqref{limitwnJ} in \cite{DuyckaertsPhan24P} (see (3.5) there). The fact that this norm also goes to $0$ follows from the boundedness of the sequence $(u_{0,n})_n$ is bounded in $H^{s_0}$ and Strichartz estimates.

We next construct a nonlinear analog, adapted to the equation \eqref{NLS}, of the preceding profile decomposition.
With the notations of Proposition \ref{P:decomposition1}, extracting subsequences if necessary, we see that for all $j$, one of the following holds:
\begin{itemize}
\item $\varphi^j_L\equiv 0$. In this case we say that $(\varphi_{Ln}^j)_n$ is a \emph{null profile}, and denote $j\in \JJJ_0$. 
\item $\varphi^j_L\not \equiv 0$ and $\lim_{n\to\infty}\lambda_{n}^j=\lambda^j\in (0,1]$. In this case we say that $(\varphi_{Ln}^j)_n$ is a \emph{non-concentrating profile}, and denote $j\in \JJJ_{NC}$. We have $\varphi_0^j=\varphi^j_L(0)\in H^{s_0}$.
Rescaling, we see that we can assume $\lambda^j=1$. As a consequence, we can assume $\lambda_n^j=1$ for all $n$.
\item $\varphi^j_L \not\equiv 0$ and $\lim_{n\to\infty}\lambda_n^j=0$. In this case we say that $(\varphi_{Ln}^j)_n$ is a \emph{concentrating profile}, and denote $j\in\JJJ_C$. 
\end{itemize}
\begin{remark}
\label{R:noconcentration}
 If the sequence $(u_{0,n})_n$ is bounded in $H^{s}$ for some $s>s_0$, then $\JJJ_C=\emptyset$.
\end{remark}
\begin{remark}
\label{R:Lq}
Let $2<q<q_0=\frac{2d}{d-2s_0}$, and 
 \begin{equation}
 \label{deftauj}
 \tau^j=\lim_{n\to\infty} \frac{-t_n^j}{(\lambda_n^j)^2}.
\end{equation}
Extracting subsequences and translating the profiles in time, we can assume $\tau^j\in\{0,\pm\infty\}$. Then if $\tau^j\in \{\pm\infty\}$ or $j\in \JJJ_C$
\begin{equation}
\label{0Lq}
\lim_{n\to\infty}\|\varphi_{Ln}^j(0)\|_{L^q}=0.
\end{equation} 
Furthermore, we have the following expansion:
$$\lim_{n\to\infty}\|u_{0,n}\|_{L^q}^{q}=\sum_{\substack{j\in \JJJ_{NC}\\ \tau^j=0}}\|\varphi_{L}^j(0)\|^{q}_{L^q}.$$ 
\end{remark}
To each (linear) profile $\varphi_{Ln}^j$, we associate a \emph{nonlinear profile} $\varphi_{n}^j$ and a \emph{modified nonlinear profile} $\tilde{\varphi}_{n}^j$  in the following way:
\begin{itemize}
\item If $j\in \JJJ_0$, $\tilde{\varphi}^j_n$ and $\varphi_n^j$ are both equal to the constant null function.
 \item If $j\in \JJJ_{NC}$, the 
 modified  nonlinear profile and the nonlinear profile are equal, and defined by 
 $$\tvarphi_n^j(t,x)=\varphi_n^j(t,x)=\varphi^j\left(t-t^j_n,x-x_n^j\right),$$
 where  $\varphi^j$ is the unique solution of \eqref{NLS} such that 
 \begin{equation}
 \lim_{t\to\tau^j}\left\|\varphi^j(t)-\varphi^j_L(t)\right\|_{H^{s_0}}=0,\quad \tau^j=\lim_{n\to\infty}-t^j_n.
 \end{equation}
 \item If $j\in \JJJ_{C}$, the nonlinear profile $\varphi_n^j$ is defined by 
 \begin{equation}
 \label{defphinj}
 \varphi_{n}^j(t,x)=\frac{1}{(\lambda_n^j)^{\frac{2}{p_0}}}\varphi^j\left( \frac{t-t_n^j}{(\lambda_n^j)^2},\frac{x-x_n^j}{\lambda_n^j} \right)
 \end{equation} 
 where $\varphi^j$ is the unique solution of the homogeneous equation \eqref{NLSh} such that
 $$\lim_{t\to\tau^j} \left\|\varphi^j(t)-\varphi_L^j(t)\right\|_{\dot{H}^{s_0}}=0,\quad \tau^j=\lim_{n\to\infty}\frac{-t_{j,n}}{(\lambda_n^j)^2}.$$
  To define the modified profile, we fix $\sigma^j$ in the maximal interval of existence of $\varphi^j$ (if $\tau^j$ is finite, we can take $\sigma^j=\tau^j$, if $\tau^j=\pm\infty$, $|\sigma^j|$ large and with the same sign than $\tau^j$). We let 
 $$ s^j_n=(\lambda_n^j)^2\sigma^j+t_n^j$$
 and denote by $\tilde{\varphi}_n^j$ the solution of \eqref{NLSh} such that 
 \begin{equation}
 \label{def_tildephij}
 \tilde{\varphi}_n^j(s^j_n)=\chi\left( x-x_n^j \right)\varphi_n^j(s_n^j)= \chi\left(x-x_n^j\right)\frac{1}{(\lambda_n^j)^{\frac{2}{p_0}}}\varphi^j\left( \sigma^j,\frac{x-x_{n}^j}{\lambda_{n}^j} \right),  
 \end{equation} 
where $\chi\in C_0^{\infty}(\R^N)$ is radially symmetric, $\chi(x)=1$ for $|x|<1$, $\chi(x)=0$ for $|x|>2$.
 \end{itemize}
When $j\in \JJJ_C$, the modified profiles $\tilde{\varphi}_n^j$ are close to the profiles $\varphi_n^j$ in $\dot{H}^{s_0}$, \begin{lemma}
\label{L:modifprofile1}
 Let $j\in \JJJ_C$. Then
 \begin{equation}
  \label{equiv_prof1}
  \lim_{n\to\infty}\left\|\tilde{\varphi}^j_n(0)-\varphi_{n}^j(0)\right\|_{\dot{H}^{s_0}}=0.
 \end{equation}
 \end{lemma}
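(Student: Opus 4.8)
The plan is to exploit the exact scaling invariance of the homogeneous equation \eqref{NLSh} to reduce \eqref{equiv_prof1} to a statement about two solutions of \eqref{NLSh} whose data at a common time converge in $\dot{H}^{s_0}$. Throughout I drop the superscript $j$ (writing $\lambda_n,t_n,x_n,\varphi,\sigma,\tau,s_n$) and recall that $s_0=\frac d2-\frac{2}{p_0}$ is precisely the scaling-critical regularity for \eqref{NLSh}, so that the operator $(T_nf)(y)=\lambda_n^{2/p_0}f(\lambda_n y+x_n)$ is an isometry of $\dot{H}^{s_0}$. Applying $T_n$ and using \eqref{defphinj} and \eqref{def_tildephij}, a direct computation gives $T_n\varphi_n(0)=\varphi(t_n')$ and $T_n\tilde{\varphi}_n(0)=\psi_n(t_n')$, where $t_n'=-t_n/\lambda_n^2\to\tau$ by \eqref{deftauj}, and where $\psi_n(t,y)=\lambda_n^{2/p_0}\tilde{\varphi}_n(\lambda_n^2t+t_n,\lambda_ny+x_n)$ is, by the scaling symmetry of \eqref{NLSh}, the solution of \eqref{NLSh} with data $\psi_n(\sigma)=\chi(\lambda_n\cdot)\varphi(\sigma)$ at time $\sigma$. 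Since $T_n$ is an isometry, \eqref{equiv_prof1} is equivalent to $\lim_n\|\psi_n(t_n')-\varphi(t_n')\|_{\dot{H}^{s_0}}=0$.

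The first, and main, step is to show the data converge, i.e.
\[
\lim_{n\to\infty}\|\psi_n(\sigma)-\varphi(\sigma)\|_{\dot{H}^{s_0}}=\lim_{n\to\infty}\|(\chi(\lambda_n\cdot)-1)\varphi(\sigma)\|_{\dot{H}^{s_0}}=0.
\]
I would prove this by a density argument. First, the operators $f\mapsto(\chi(\lambda_n\cdot)-1)f$ are uniformly bounded on $\dot{H}^{s_0}$: by the fractional Leibniz rule together with $\dot{H}^{s_0}\subset L^{2d/(d-2s_0)}$,
\[
\|(\chi(\lambda_n\cdot)-1)f\|_{\dot{H}^{s_0}}\lesssim \|\chi(\lambda_n\cdot)-1\|_{L^\infty}\|f\|_{\dot{H}^{s_0}}+\||\nabla|^{s_0}(\chi(\lambda_n\cdot)-1)\|_{L^{d/s_0}}\|f\|_{L^{2d/(d-2s_0)}},
\]
and both norms of $\chi(\lambda_n\cdot)-1$ are invariant under $x\mapsto\lambda_nx$ (as $L^{d/s_0}$ is scaling-critical for $|\nabla|^{s_0}$), hence bounded uniformly in $n$. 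Given $\eps>0$, pick $v\in C_c^\infty$ with $\|\varphi(\sigma)-v\|_{\dot{H}^{s_0}}<\eps$. Since $\lambda_n\to 0$ (because $j\in\JJJ_C$), for $n$ large $\chi(\lambda_n\cdot)\equiv 1$ on the support of $v$, so $(\chi(\lambda_n\cdot)-1)v=0$; combining with the uniform bound applied to $\varphi(\sigma)-v$ gives $\limsup_n\|(\chi(\lambda_n\cdot)-1)\varphi(\sigma)\|_{\dot{H}^{s_0}}\lesssim\eps$, and $\eps\to0$ concludes.

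The second step is to propagate this from time $\sigma$ to time $t_n'$. Extracting a subsequence, $\tau\in\{0,\pm\infty\}$. If $\tau$ is finite, then $\sigma=\tau$ and $t_n'\to\tau$; fixing a compact interval $\mathcal{I}\ni\tau$ on which $\varphi$ is defined and invoking continuous dependence of the \eqref{NLSh} flow on the data (the homogeneous analogue of Theorem \ref{T:local wellposed} and Theorem \ref{T:long time perturbation}, see Remark \ref{R:homogeneous}) yields $\sup_{t\in\mathcal{I}}\|\psi_n(t)-\varphi(t)\|_{\dot{H}^{s_0}}\to0$, and $t_n'\in\mathcal{I}$ for $n$ large gives the claim. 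If $\tau=\pm\infty$, then $\varphi$ scatters as $t\to\tau$, so its scattering norm $\|\varphi\|_{L^{p_0(d+2)/2}(\mathcal{I}\times\R^d)}$ on the half-line $\mathcal{I}$ from $\sigma$ to $\tau$ is finite; setting $A$ equal to this norm and using that Step~1 makes $\|\psi_n(\sigma)-\varphi(\sigma)\|_{\dot{H}^{s_0}}<\eps(A)$ for $n$ large, the homogeneous long-time perturbation theorem gives $\psi_n$ defined on $\mathcal{I}$ with $\|\psi_n-\varphi\|_{\dot{S}^{s_0}(\mathcal{I})}\leq C(A)\|\psi_n(\sigma)-\varphi(\sigma)\|_{\dot{H}^{s_0}}\to0$. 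In both cases $t_n'\in\mathcal{I}$ for $n$ large, so $\|\psi_n(t_n')-\varphi(t_n')\|_{\dot{H}^{s_0}}\to0$.

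The main obstacle is the data convergence in Step~1: the cutoff $\chi(\lambda_n\cdot)-1$ does not tend to $0$ in any scaling-invariant norm, so a naive product estimate only yields boundedness. The vanishing genuinely relies on the density of $C_c^\infty$ in $\dot{H}^{s_0}$ together with the fact that the support of $\chi(\lambda_n\cdot)-1$ escapes to infinity as $\lambda_n\to0$; everything else (the rescaling identities and the propagation in Step~2) is routine given the Cauchy and stability theory already recalled.
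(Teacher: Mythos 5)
Your proof is correct, and it follows the route that the paper itself takes (the lemma is only quoted here, with the proof deferred to Lemma 3.12 of \cite{DuyckaertsPhan24P}): conjugate by the $\dot{H}^{s_0}$-isometric rescaling so that the difference at the matching time $\sigma^j$ becomes $(\chi(\lambda_n\cdot)-1)\varphi^j(\sigma^j)$, show this vanishes by uniform boundedness of the multiplication operators plus density of $C_c^\infty$ in $\dot H^{s_0}$, and propagate to time $t_n'$ by the homogeneous local/long-time stability theory, splitting according to whether $\tau^j$ is finite or infinite. No gaps.
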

These modified profiles are also approximate solutions of \eqref{NLS} (see Lemma 3.13 in \cite{DuyckaertsPhan24P}).

We next give an approximation result. We let $\tw_{L,n}^J$ be the solution of the linear wave equation with initial data
\begin{equation}
\label{def_tilde_w}
\tilde{w}_{L,n}^J(0)=u_{0,n}-\sum_{j=1}^J\tilde{\varphi}_n^j(0).
\end{equation} 
One can prove that for all $s$ with $0<s\leq s_0$,
 $$\lim_{J\to\infty}\limsup_{n\to\infty}\left\|\tilde{w}_{L,n}^J\right\|_{X(\R)}+\left\|\tilde{w}_{L,n}^J\right\|_{\dot{W}^s(\R)}=0.$$
Then we have:
 \begin{theorem}[Approximation by profiles]
\label{T:NLapprox}
Let $(u_{0,n})_n$ be a sequence bounded in $H^{s_0}$ that admits a profile decomposition $\left((\varphi^j_{Ln})_n\right)_{j}$. Let $\varphi_n^j$,  $\tvarphi_n^j$ and $\tilde{w}_{L,n}^J$ be as above. Let $I_n$ be a sequence of intervals such that $0\in I_n$, and assume that for each $j\geq 1$, for large $n$,
$0\in I_n \subset I_{\max}(\varphi_n^j),$
\begin{gather}
\label{scatt_profile1}
j\in \JJJ_C\Longrightarrow \limsup_{n\to\infty} \left\||\nabla|^{s_0}\varphi_n^j\right\|_{S^{0}(I_n)}<\infty,\\
\label{scatt_profile2}
j\in \JJJ_{NC}\Longrightarrow \limsup_{n\to\infty} \left\|\varphi_n^j\right\|_{S^{s_0}(I_n)}<\infty.
\end{gather}
Let $u_n$ be the solution of \eqref{NLS} with initial data $u_{0,n}$. Then for large $n$, $I_n\subset I_{\max}(u_n)$, 
$$ u_n(t)=\sum_{1\leq j\leq J} \tvarphi_n^j(t,x)+\tilde{w}_{L,n}^J(t)+r_{n}^J(t),\quad t\in I_n,$$
with 
$$\limsup_{n\to \infty}\|u_n\|_{S^{s_0}(I_n)}<\infty$$
and
\begin{equation}
 \label{lim_rnJ}
\lim_{J\to\infty}\limsup_{n\to\infty}\left\|r_n^J\right\|_{S^{s_0}(I_n)}=0.
 \end{equation} 
\end{theorem}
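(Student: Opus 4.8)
The plan is to realize $u_n$ as a small perturbation of the explicit approximate solution
\[
U_n^J:=\sum_{1\leq j\leq J}\tvarphi_n^j+\tw_{L,n}^J,
\]
and then to invoke the long-time perturbation theorem (Theorem~\ref{T:long time perturbation}) with $w=U_n^J$. By the definition \eqref{def_tilde_w} of $\tw_{L,n}^J$ the initial data match exactly, $U_n^J(0)=u_{0,n}$, so the term $\norm{u_0-w(0)}_{H^{s_0}}$ in the hypothesis of Theorem~\ref{T:long time perturbation} vanishes. It therefore remains to establish the two quantitative inputs of that theorem: a uniform bound $\norm{U_n^J}_{X(I_n)}\leq A$ with $A$ independent of both $n$ and $J$, and the smallness of the error
\[
e_n^J:=(i\partial_t+\Delta)U_n^J-g(U_n^J),\qquad \lim_{J\to\infty}\limsup_{n\to\infty}\norm{e_n^J}_{N^{s_0}(I_n)}=0.
\]
Given these, one fixes $J$ large so that $\limsup_n\norm{e_n^J}_{N^{s_0}(I_n)}$ is below the threshold $\eps(A)$, applies Theorem~\ref{T:long time perturbation} for large $n$, and reads off $r_n^J=u_n-U_n^J$ with the bound \eqref{lim_rnJ}; the requirement that $A$ be independent of $J$ is what makes this choice of $J$ legitimate.

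For the uniform bound I would pass from the hypotheses \eqref{scatt_profile1}--\eqref{scatt_profile2} on individual profiles to a bound on their sum using the orthogonality of the sequences of transformations (Definition~\ref{D:ortho_equiv}). Orthogonality forces the mixed space-time integrals $\int|\tvarphi_n^j|^a|\tvarphi_n^k|^b$ with $j\neq k$ to vanish as $n\to\infty$, so the $X(I_n)$- and $S^{s_0}(I_n)$-norms of $\sum_{j\leq J}\tvarphi_n^j$ are, up to $o_n(1)$ errors, governed by the sum of the individual norms. Here the finitely many profiles of non-negligible $\dot{H}^{s_0}$-size are bounded directly by \eqref{scatt_profile1}--\eqref{scatt_profile2}, while the remaining profiles scatter with small norm by small-data theory, their sizes being $\ell^2$-summable thanks to the Pythagorean expansion \eqref{Pythagorean1}; this yields a bound independent of $n$ and $J$. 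The contribution of $\tw_{L,n}^J$ is controlled by its stated smallness in $X(\R)$ and $\dot{W}^s(\R)$.

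The heart of the proof is the error estimate, which I would decompose into a self-interaction part and a cross-interaction part. Each modified profile is only an approximate solution of \eqref{NLS}: the non-concentrating profiles are exact solutions, while the concentrating profiles solve the homogeneous equation \eqref{NLSh} after truncation by $\chi$, so $(i\partial_t+\Delta)\tvarphi_n^j-g(\tvarphi_n^j)$ consists of the rescaled lower-order term and the commutator generated by the cutoff; both are negligible in $N^{s_0}$ by Lemma~3.13 of \cite{DuyckaertsPhan24P}. The remaining error,
\[
g(U_n^J)-\sum_{j\leq J}g(\tvarphi_n^j)-\Big(g\big(\textstyle\sum_{j\leq J}\tvarphi_n^j+\tw_{L,n}^J\big)-g\big(\textstyle\sum_{j\leq J}\tvarphi_n^j\big)\Big),
\]
measures the genuinely nonlinear interactions between distinct profiles and between the profiles and the dispersive remainder. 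The profile--profile interactions vanish by the same orthogonality mechanism, and the interactions involving $\tw_{L,n}^J$ are absorbed using its smallness in $X(\R)$ together with the Strichartz and fractional Leibniz estimates underlying the $N^{s_0}$ norm.

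The main obstacle I expect is the cross-term analysis for the higher-order nonlinearity in the energy-supercritical regime $s_0>1$. Since $g$ is not polynomial unless $p_0,p_1$ are even integers, one cannot simply expand $g\big(\sum_{j\leq J}\tvarphi_n^j\big)$; instead one must invoke fractional product and chain-rule estimates, valid precisely under Assumption~\ref{Assum:NL}, and verify that after distributing $s_0$ derivatives every resulting mixed term still carries either a factor that vanishes by orthogonality or a factor involving $\tw_{L,n}^J$ that is small. Maintaining the $\dot{H}^{s_0}$-scaling consistently across the concentrating profiles, and controlling the cutoff commutators in $N^{s_0}$ uniformly in $J$, is the delicate bookkeeping the argument must carry out.
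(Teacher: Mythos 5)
Your proposal is correct and follows essentially the same route as the source: the paper does not reprove Theorem~\ref{T:NLapprox} but recalls it from Section~3 of \cite{DuyckaertsPhan24P}, where the argument is exactly the one you outline — take $U_n^J=\sum_{j\leq J}\tvarphi_n^j+\tw_{L,n}^J$ as an approximate solution with matching data, control its $X(I_n)$-norm uniformly in $J$ and $n$ via orthogonality plus the Pythagorean expansion and small-data theory for the tail, show the error is small in $N^{s_0}$ by combining the approximate-solution property of the modified profiles (their Lemma~3.13) with the vanishing of cross-interactions, and close with the long-time perturbation theorem. The difficulties you flag (non-polynomial nonlinearity requiring fractional chain-rule estimates under Assumption~\ref{Assum:NL}, and the cutoff commutators for concentrating profiles) are precisely the technical points handled in the cited reference.
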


We will also need the fact that Pythagorean expansions of the Sobolev norms hold in the setting of the preceding Theorem:
\begin{lemma}
\label{L:Pythagorean}
With the same assumptions and notations as in Theorem \ref{T:NLapprox}, if $(t_n)_n$ is a sequence of time with $t_n\in I_n$ for all time, then for all $J\geq 1$,
$$ \|u_n(t_n)\|^2_{\dot{H}^{s_0}}=\sum_{j=1}^J\|\tilde{\varphi}^j_n(t_n)\|_{\dot{H}^{s_0}}^2+\|\tilde{w}_n^J(t_n)\|^2_{\dot{H}^{s_0}}+o_{J,n}(1),$$
where $\lim_{J\to\infty}\limsup_{n\to\infty} o_{J,n}(1)=0$. Furthermore, for all $s$ with $0\leq s<s_0$, 
$$ \|u_n(t_n)\|^2_{\dot{H}^{s}}= \sum_{\substack{1\leq j\leq J\\ j\in \JJJ_{NC}}}\|\tilde{\varphi}^j_n(t_n)\|_{\dot{H}^{s}}^2+\|\tilde{w}_n^J(t_n)\|^2_{\dot{H}^{s}}+o_{J,n}(1),$$
\end{lemma}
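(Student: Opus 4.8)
The plan is to expand the squared $\dot H^{s}$ norm of the profile decomposition furnished by Theorem \ref{T:NLapprox} and to show that every cross term is asymptotically negligible. First I would discard the remainder: since $\lim_{J\to\infty}\limsup_{n\to\infty}\norm{r_n^J}_{S^{s_0}(I_n)}=0$ and $S^{s_0}(I_n)$ embeds into $L^\infty(I_n,H^{s_0})$, which controls $\norm{\cdot}_{\dot H^{s}}$ pointwise in time for every $0\leq s\leq s_0$, the term $r_n^J(t_n)$ contributes only $o_{J,n}(1)$ to $\norm{u_n(t_n)}_{\dot H^{s}}^2$. Hence it suffices to prove the expansion for $v_n^J(t_n):=\sum_{j=1}^J\tilde\varphi_n^j(t_n)+\tilde w_{L,n}^J(t_n)$, where $\tilde w_{L,n}^J$ is the dispersive remainder of Theorem \ref{T:NLapprox} (written $\tilde w_n^J$ in the statement). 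Expanding the square, the lemma reduces to the two orthogonality statements
\[
\lim_{n\to\infty}\left\langle |\nabla|^s\tilde\varphi_n^j(t_n),\,|\nabla|^s\tilde\varphi_n^k(t_n)\right\rangle_{L^2}=0\quad(j\neq k),
\]
and, for each fixed $j$,
\[
\lim_{n\to\infty}\left\langle |\nabla|^s\tilde\varphi_n^j(t_n),\,|\nabla|^s\tilde w_{L,n}^J(t_n)\right\rangle_{L^2}=0,
\]
together with the bookkeeping of which profiles survive at regularity $s$.

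For the profile--profile cross terms I would exploit the scaling invariance of the critical space: because $s_0=\frac d2-\frac2{p_0}$, the transformation $T_{\Lambda_n^j}f=(\lambda_n^j)^{-2/p_0}f\big((\cdot-x_n^j)/\lambda_n^j\big)$ is an isometry of $\dot H^{s_0}$. Writing each modified nonlinear profile at time $t_n$ in transformed form --- directly for $j\in\JJJ_{NC}$, and via the scale invariance of \eqref{NLSh} together with Lemma \ref{L:modifprofile1} and $\chi(\lambda_n^j\cdot)\to1$ for $j\in\JJJ_C$ --- one sees that $\tilde\varphi_n^j(t_n)=T_{\Lambda_n^j}f_n^j$ with $f_n^j$ bounded in $\dot H^{s_0}$ (using \eqref{scatt_profile1}--\eqref{scatt_profile2}) and converging, along a subsequence, to a fixed function. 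By density one reduces to fixed profiles and invokes the standard orthogonality estimate: if $\Lambda_n^j\perp\Lambda_n^k$ then $\langle T_{\Lambda_n^j}f,T_{\Lambda_n^k}g\rangle_{\dot H^{s_0}}\to0$. The only delicate sub-case is when the scales and positions are comparable but the rescaled times separate, i.e. $|t_n^j-t_n^k|/(\lambda_n^j)^2\to\infty$; there I would use the dispersive decay of the scattering profiles to conclude that the overlap vanishes.

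The profile--tail cross terms are the main obstacle. Here the point is that $\tilde w_{L,n}^J$ solves the linear equation, and its initial data $u_{0,n}-\sum_{j\le J}\tilde\varphi_n^j(0)$ is, by the construction of the decomposition and the closeness of modified to linear profiles (Lemma \ref{L:modifprofile1}), asymptotically orthogonal in $\dot H^{s_0}$ to each profile after applying $T_{\Lambda_n^j}^{-1}$. Since $e^{it\Delta}$ is unitary on $\dot H^{s_0}$ and commutes with the spatial transformations $T_{\Lambda_n^j}$, this weak-null property is transported from time $0$ to the moving time $t_n$: $T_{\Lambda_n^j}^{-1}\tilde w_{L,n}^J(t_n)\rightharpoonup0$ in $\dot H^{s_0}$, while $T_{\Lambda_n^j}^{-1}\tilde\varphi_n^j(t_n)$ converges strongly along a subsequence, so the inner product tends to $0$. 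The delicacy --- and the reason I expect this to be the hardest step --- is reconciling the modified nonlinear profiles (solutions of \eqref{NLSh} cut off by $\chi$) with the linear profiles that define the remainder, and carefully commuting the time evolution past the weak limit; I would carry this out by combining \eqref{def_tilde_w}, the smallness of the dispersive remainder in the spirit of \eqref{limitwnJ}, Lemma \ref{L:modifprofile1}, and the unitarity of the linear flow.

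Finally, for the lower regularities $0\leq s<s_0$ the same scheme applies, with one simplification that also explains the restriction of the sum to $\JJJ_{NC}$: a direct computation gives $\norm{T_{\Lambda_n^j}f}_{\dot H^{s}}=(\lambda_n^j)^{s_0-s}\norm{f}_{\dot H^{s}}$, which tends to $0$ whenever $\lambda_n^j\to0$. Thus every concentrating profile $j\in\JJJ_C$ contributes $o(1)$ to the $\dot H^{s}$ norm and may be dropped, leaving only $j\in\JJJ_{NC}$, consistently with \eqref{PythagoreanHs}. The cross terms are treated exactly as above, now in $\dot H^{s}$, which completes the proof.
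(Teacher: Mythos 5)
The paper itself gives no proof of Lemma \ref{L:Pythagorean}: the whole of Subsection \ref{sub:profile} is a recollection of results from \cite{DuyckaertsPhan24P}, and the lemma is quoted from there without argument. So there is no in-paper proof to compare against; what follows is an assessment of your plan on its own terms. The architecture is the standard one (discard $r_n^J$ via \eqref{lim_rnJ} and the embedding $S^{s_0}(I)\subset L^\infty(I,H^{s_0})$, then kill the profile--profile and profile--remainder cross terms using orthogonality of the transformations), and that is certainly the intended route.

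Two points need repair. First, and most concretely, your treatment of the concentrating profiles at regularity $s<s_0$ does not work as written: the identity $\|T_{\Lambda_n^j}f\|_{\dot H^{s}}=(\lambda_n^j)^{s_0-s}\|f\|_{\dot H^{s}}$ only gives decay if $f$ is a \emph{fixed} function with finite $\dot H^{s}$ norm, whereas for $j\in\JJJ_C$ the rescaled object $T_{\Lambda_n^j}^{-1}\tilde\varphi_n^j(t_n)$ is only controlled in $\dot H^{s_0}$ (the profile $\varphi^j$ lives in $\dot H^{s_0}$, not in $\dot H^{s}$, and the cut-off data $\chi(\lambda_n^j\cdot)\varphi^j(\sigma^j)$ has $L^2$ norm of size $(\lambda_n^j)^{-s_0}$ in general). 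The correct argument goes through the cut-off: split $\varphi^j(\sigma^j)$ into a compactly supported smooth piece and a piece small in $\dot H^{s_0}$ to show $\|\tilde\varphi_n^j(s_n^j)\|_{L^2}\to0$, then use conservation of mass for \eqref{NLSh} to transport this to time $t_n$, and interpolate with the uniform $\dot H^{s_0}$ bound to get $\|\tilde\varphi_n^j(t_n)\|_{\dot H^{s}}\to0$ for all $0\leq s<s_0$. Second, the profile--remainder step is asserted a bit too quickly: unitarity of $e^{it\Delta}$ on $\dot H^{s}$ does not by itself transport the weak vanishing of $T_{\Lambda_n^j}^{-1}\tilde w_{L,n}^J(0)$ to an unbounded sequence of rescaled times; when $(t_n-t_n^j)/(\lambda_n^j)^2$ diverges one must instead pair against the linear asymptote of the scattering profile and invoke the smallness of $\tilde w_{L,n}^J$ in the dispersive norms ($X(\R)$, $\dot W^{s}(\R)$, $L^\infty L^{q_0}$) together with an interpolation with the bounded $\dot H^{s_0}$ norm. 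You gesture at both ingredients, so this is a matter of execution rather than a wrong idea, but as stated the step is not yet a proof.
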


\subsection{A uniform bound on the scattering norm}
We recall here \cite[Proposition 4.1]{DuyckaertsPhan24P}, which gives a quantitative version of Property \ref{Proper:bnd}. The proof uses profile decomposition:
\begin{proposition}
\label{P:boundh}
Let $p_0>\frac{4}{d}$, $s_0=\frac{d}{2}-\frac{2}{p_0}$. Assume Property \ref{Proper:bnd}. Then for all $A\in (0,A_0)$, there exists $\FFF(A)>0$ such that for all interval $0\in I$, for all solution $u\in C^0(I,\dot{H}^{s_0})$ of \eqref{NLSh} such that
\begin{equation}
\label{Hs0_bound}
\sup_{t\in I}\|u(t)\|_{\dot{H}^{s_0}}\leq A
\end{equation} 
we have 
\begin{equation}
\label{s0_bound}
\|u\|_{\dot{S}^{s_0}(I)}\leq \FFF(A).
\end{equation} 
\end{proposition}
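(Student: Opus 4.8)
The plan is to argue by contradiction, combining the profile decomposition for \eqref{NLSh} with Property \ref{Proper:bnd} through a critical-threshold (minimal blow-up) argument. I would let $\FFF(A)$ denote the supremum of $\|u\|_{\dot S^{s_0}(I)}$ over all intervals $0\in I$ and all solutions $u$ of \eqref{NLSh} satisfying $\sup_{t\in I}\|u(t)\|_{\dot H^{s_0}}\leq A$. The small-data theory gives $\FFF(A)<\infty$ for $A$ small, so the threshold $A_c=\sup\{A:\FFF(A)<\infty\}$ is positive, and the claim is exactly $A_c\geq A_0$. Assuming instead $A_c<A_0$, I would pick solutions $u_n$ on intervals $I_n\ni 0$ with $\sup_{t\in I_n}\|u_n(t)\|_{\dot H^{s_0}}\to A_c$ and $\|u_n\|_{\dot S^{s_0}(I_n)}\to\infty$, and then translate time to a point where $\|u_n(t)\|_{\dot H^{s_0}}$ nearly attains its maximum over $I_n$, so that $\|u_n(0)\|_{\dot H^{s_0}}\to A_c$: the initial data carry the full critical norm.

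Next I would apply the profile decomposition of Proposition \ref{P:decomposition1} (in its homogeneous version adapted to \eqref{NLSh}) to the bounded sequence $(u_n(0))_n$ in $\dot H^{s_0}$, producing profiles $\varphi_{Ln}^j$, nonlinear profiles $\varphi^j$, and a remainder $w_{Ln}^J$. The $\dot H^{s_0}$ Pythagorean identity \eqref{Pythagorean1} at $t=0$ yields $\sum_j\|\varphi_L^j(0)\|_{\dot H^{s_0}}^2+\|w_{Ln}^J(0)\|_{\dot H^{s_0}}^2\to A_c^2$. This forces a dichotomy: either (i) one profile saturates, with $\|\varphi_L^1(0)\|_{\dot H^{s_0}}=A_c$ and all other profiles and the remainder vanishing; or (ii) every nontrivial piece has critical norm at most $\sqrt{A_c^2-\delta}$ for some $\delta>0$.

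In case (ii) each nonlinear profile is strictly subcritical initially. Transferring the uniform bound $\sup_{t\in I_n}\|u_n(t)\|_{\dot H^{s_0}}\leq A_c$ to the individual profiles through the nonlinear Pythagorean expansion of Lemma \ref{L:Pythagorean} keeps each $\tilde\varphi_n^j$ bounded by $A_c$ in $\dot H^{s_0}$ along $I_n$, so that $\sup_t\|\varphi^j(t)\|_{\dot H^{s_0}}\leq A_c-\delta/2<A_c$ on the relevant interval and hence $\|\varphi^j\|_{\dot S^{s_0}}\leq \FFF(A_c-\delta/2)<\infty$. Feeding these bounds, together with the control of the remainder, into the approximation Theorem \ref{T:NLapprox} bounds $\limsup_n\|u_n\|_{\dot S^{s_0}(I_n)}$, contradicting its divergence. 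In case (i), the single nonlinear profile $U=\varphi^1$ is a critical element: $u_n$ equals, up to symmetries and a vanishing error, one bump modelled on $U$, and the max-time centering forces $\sup_t\|U(t)\|_{\dot H^{s_0}}\leq A_c<A_0$. Property \ref{Proper:bnd} then applies to $U$ (in both time directions, by time reversal of \eqref{NLSh}), so $U$ scatters and $\|U\|_{\dot S^{s_0}}<\infty$; the approximation theorem once more bounds $\|u_n\|_{\dot S^{s_0}(I_n)}$, the desired contradiction. Hence $A_c\geq A_0$, which is the asserted uniform bound for every $A<A_0$.

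The main obstacle is the apparent circularity in case (ii): both Theorem \ref{T:NLapprox} and Lemma \ref{L:Pythagorean} require the nonlinear profiles to have finite scattering norm on $I_n$, which is precisely what must be established. This is why the time translation to the maximizing instant is essential: centering at the worst time lets the global-in-$I_n$ bound $\sup_t\|u_n(t)\|_{\dot H^{s_0}}\leq A_c$ propagate, via the Pythagorean expansions, into an a priori $\dot H^{s_0}$ bound strictly below $A_c$ for each profile, so the subcritical hypothesis $\FFF(\,\cdot\,)<\infty$ may be invoked without circularity. Making this rigorous requires a continuity/maximal-good-interval bootstrap in time, which I expect to be the most delicate part of the argument.
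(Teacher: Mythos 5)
Your plan is essentially the argument the authors intend: the paper does not reprove Proposition \ref{P:boundh} here (it is quoted from \cite[Proposition 4.1]{DuyckaertsPhan24P} with the remark that the proof uses profile decomposition), and the induction-on-the-critical-threshold scheme you describe — profile decomposition at a near-maximizing time, Pythagorean dichotomy, subcritical profiles handled by the inductive bound and the critical element by Property \ref{Proper:bnd} — is exactly the scheme carried out in detail in Section \ref{S:scattering} for the inhomogeneous analogue (Theorem \ref{T:scatt2}, Claim \ref{Cl:compactness}). The circularity you flag in case (ii) is resolved there precisely by the maximal-good-interval bootstrap (the choice of $b_n'$ in Step 2), so your proposal is correct in approach and matches the paper's.
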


\subsection{Global well posedness} 
Using Proposition \ref{P:boundh}, we obtain that if Property \ref{Proper:bnd} holds then solutions satisfying \eqref{bound_Hs0} is global forward in time. More precisely, we have the following result:
\begin{theorem}(see \cite[Theorem 1.2]{DuyckaertsPhan24P})
\label{T:GWP}
Let $p_0>\frac{4}{d}$, $s_0=\frac{d}{2}-\frac{2}{p_0}$. Suppose that Assumption \ref{Assum:NL} holds. Assume Property \ref{Proper:bnd}. Let $u$ be a solution of \eqref{NLS}, with initial data in $H^{s_0}$, such that \eqref{bound_Hs0} holds. Then $T_+(u)=\infty$.
\end{theorem}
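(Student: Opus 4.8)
The plan is to argue by contradiction, reducing global existence to a finite bound on the scattering norm. Suppose $T_+(u)<\infty$. By \eqref{bound_Hs0} I can fix $A\in(0,A_0)$ and a time $t_0\in[0,T_+(u))$ with $\sup_{t\in[t_0,T_+)}\|u(t)\|_{\dot H^{s_0}}\le A$; since the mass $M(u)=\|u(t)\|_{L^2}^2$ is conserved, this upgrades to a bound on the full norm, $\sup_{t\in[t_0,T_+)}\|u(t)\|_{H^{s_0}}\le A':=A+\|u_0\|_{L^2}$. By the blow-up criterion in Theorem \ref{T:local wellposed}, $T_+<\infty$ forces $\|u\|_{X([0,T_+))}=\infty$, so it suffices to prove $\|u\|_{X([t_0,T_+))}<\infty$ (equivalently, by \eqref{Ss_X}, $\|u\|_{S^{s_0}([t_0,T_+))}<\infty$) to reach a contradiction and conclude $T_+=\infty$.

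The mechanism I would use is to regard the lower-order term $-|u|^{p_1}u$ as a perturbation of the homogeneous equation \eqref{NLSh}, for which Property \ref{Proper:bnd} — quantified by Proposition \ref{P:boundh} — already controls the scattering norm. The key structural asymmetry is that $p_1$ is $\dot H^{s_0}$-subcritical (its scaling-critical exponent $s_1=\frac d2-\frac{2}{p_1}$ is $<s_0$), so on a subinterval $J\subset[t_0,T_+)$ of length $|J|$ the $p_1$-contribution to a dual-Strichartz norm gains a positive power of $|J|$ after Hölder in time, i.e. $\big\||u|^{p_1}u\big\|_{N^{s_0}(J)}\lesssim |J|^{\theta}\,\|u\|_{S^{s_0}(J)}^{p_1+1}$ for some $\theta>0$ (using the bound $A'$). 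By contrast the critical $p_0$-term gains nothing from shrinking $|J|$; its control can only come from the homogeneous scattering theory.

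Concretely, I would subdivide $[t_0,T_+)$ into $N$ consecutive intervals $J_1,\dots,J_N$, chosen short enough that $N=N(A',T_+-t_0)$ depends only on $A'$ and the finite length $T_+-t_0$, not on the scattering norm of $u$, and that on each $J_k$ the $p_1$-error lies below the perturbation threshold $\eps(\FFF(2A))$ of Theorem \ref{T:long time perturbation}. On each $J_k$ I would introduce the solution $h_k$ of \eqref{NLSh} with $h_k=u$ at the left endpoint of $J_k$: since $\|h_k\|_{\dot H^{s_0}}\le A$ there, a continuity/bootstrap argument based on Theorem \ref{T:long time perturbation} (whose conclusion \eqref{bounduw} controls $\|u-h_k\|_{S^{s_0}(J_k)}$, hence the $\dot H^{s_0}$-distance at every time) keeps $h_k$ inside the ball $\{\|\cdot\|_{\dot H^{s_0}}<A_0\}$; Proposition \ref{P:boundh} then yields $\|h_k\|_{\dot S^{s_0}(J_k)}\le\FFF(2A)$, the perturbation theorem transfers this to $\|u\|_{S^{s_0}(J_k)}\lesssim C(\FFF(2A))$, and $u$ at the right endpoint again has $\dot H^{s_0}$-norm $\le A$, so the iteration restarts. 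Summing over the finitely many $k$ gives $\|u\|_{X([t_0,T_+))}\le N\,C(\FFF(2A))<\infty$, the desired contradiction.

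I expect the main obstacle to be closing this induction uniformly: ensuring that neither $N$ nor the per-step constants depend on the a priori unknown quantity $\|u\|_{X}$, and that the error committed when replacing $u$ by $h_k$ does not accumulate over the $N$ steps and push the homogeneous comparison solutions out of the ball of radius $<A_0$ where Property \ref{Proper:bnd} is available. Because the critical $p_0$-interaction gains nothing from short time, one cannot make the total nonlinear interaction on each $J_k$ small; the per-interval scattering control is supplied entirely by Proposition \ref{P:boundh}, and the bootstrap must be arranged so that the loss in the $\dot H^{s_0}$ bound per step is $\ll (A_0-A)/N$. Verifying the subcritical time-gain for $\|\,|u|^{p_1}u\,\|_{N^{s_0}}$ under Assumption \ref{Assum:NL} (to make sense of fractional derivatives of the nonlinearity) and tracking these constants through the $N$-fold iteration is the delicate, if essentially routine, part.
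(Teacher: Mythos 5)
Your argument is correct and follows essentially the route the paper intends: the theorem is quoted from \cite[Theorem 1.2]{DuyckaertsPhan24P} and the paper's only indication of proof is that it ``uses Proposition \ref{P:boundh}'', which is exactly your mechanism of comparing $u$ on short time intervals near $T_+$ to the homogeneous flow, bounding the latter's scattering norm by Proposition \ref{P:boundh}, transferring the bound via Theorem \ref{T:long time perturbation} with error $e=|h|^{p_1}h$ made small by the $\dot H^{s_0}$-subcriticality of $p_1$, and contradicting the blow-up criterion of Theorem \ref{T:local wellposed}. The bootstrap issues you flag (keeping the comparison solutions below $A_0$, and making the number of intervals depend only on $A$, $M(u)$ and $T_+-t_0$) are real but are handled by the standard open-closed continuity argument you describe.
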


\section{Variational properties}
\label{S:var}
In this section, we fix $d\geq 1$ and consider a nonlinearity $g$ of the form
$$ g(u)=|u|^{p_0}u-|u|^{p_1}u,$$
with $\frac{4}{d}<p_1<p_0,$ such that $\frac{d}{2}-\frac{2}{p_0}=s_0\geq 1$. We also assume that $p_0$ and $p_1$ are even integers, or that $\supent{s_0}\leq p_1$. 

We recall the definition of the energy and the virial functional:
\begin{align*}
E(u)&=\int_{\R^d} |\nabla u|^2-\frac{2}{p_1+2}|u|^{p_1+2}+\frac{2}{p_0+2}|u|^{p_0+2}\,dx,\\
\Phi(u)&=\int_{\R^d} |\nabla u|^2-\frac{dp_1}{2(p_1+2)}|u|^{p_1+2}+\frac{dp_0}{2(p_0+2)}|u|^{p_0+2}\,dx,
\end{align*}
In this subsection, we prove positivity properties of $E$ and $\Phi$ in some subsets of $H^{s_0}$, see Theorems \ref{thm1}, Corollary \ref{cor2} and Theorem \ref{thm3} below. These properties are obtained essentially as consequences of \cite{LeNo20}.


\subsection{Preliminaries}

As in \cite{LeNo20}, we have the following Gagliardo-Nirenberg inequality
\begin{equation}
\label{eq4}
\norm{u}^{p_1+2}_{p_1+2}\leq_{p_1,p_0,d} \norm{u}_{L^2}^{p_1-\theta p_0}\norm{\nabla u}_{L^2}^{2(1-\theta)}\norm{u}^{\theta(p_0+2)}_{p_0+2},
\end{equation}
where 
\[
\theta=\frac{p_1-\frac{4}{d}}{p_0-\frac{4}{d}} \in [0,1).
\] 
We note that this implies the following:
\begin{claim}
\label{Cl:nabla}
Let $u\in H^1(\R^d)\cap L^{p_0+2}(\R^d)$ be such that $M(u)=m>0$ and $\Phi(u)=0$ Then,
\[
\norm{\nabla u}^2_2 \gtrsim_{d,p_1,p_0,m} 1. 
\]
\end{claim}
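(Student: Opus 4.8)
The plan is to bound $\norm{\nabla u}_2$ from below by playing the constraint $\Phi(u)=0$ against a Gagliardo--Nirenberg inequality, the decisive feature being the $L^2$-supercriticality $p_1>\frac4d$. Since the $L^{p_0+2}$ term in $\Phi$ is favorable, I would first discard it: writing $\Phi(u)=0$ as
\[
\norm{\nabla u}_2^2+\frac{dp_0}{2(p_0+2)}\norm{u}_{p_0+2}^{p_0+2}=\frac{dp_1}{2(p_1+2)}\norm{u}_{p_1+2}^{p_1+2},
\]
and dropping the nonnegative second term on the left gives the one-sided bound $\norm{\nabla u}_2^2\leq \frac{dp_1}{2(p_1+2)}\norm{u}_{p_1+2}^{p_1+2}$.

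In the range $p_1+2\leq \frac{2d}{d-2}$ (that is, $d\leq2$, or $d\geq3$ and $p_1\leq\frac{4}{d-2}$), the cleanest route is to forgo \eqref{eq4} and use instead the homogeneous Gagliardo--Nirenberg inequality involving only the gradient and the mass, $\norm{u}_{p_1+2}^{p_1+2}\lesssim_{d,p_1}\norm{\nabla u}_2^{dp_1/2}\,\norm{u}_2^{p_1+2-dp_1/2}$. Inserting $\norm{u}_2^2=m$ and combining with the previous bound yields $\norm{\nabla u}_2^2\lesssim_{d,p_1}\norm{\nabla u}_2^{dp_1/2}\,m^{(p_1+2-dp_1/2)/2}$. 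Since $u\not\equiv0$ while $u\in L^2$, $u$ is not a.e.\ constant and $\norm{\nabla u}_2>0$; and because $p_1>\frac4d$ forces $\frac{dp_1}{2}>2$, I may divide by $\norm{\nabla u}_2^2$ and rearrange to reach $\norm{\nabla u}_2^{\,dp_1/2-2}\gtrsim_{d,p_1} m^{-(p_1+2-dp_1/2)/2}$, which is exactly the asserted lower bound (only the explicit $m$-dependence of the constant changes with the sign of the exponent on $m$).

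The step I expect to be the genuine obstacle is the complementary range $p_1+2>\frac{2d}{d-2}$, where $p_1$ is itself energy-supercritical and the gradient-only Gagliardo--Nirenberg inequality above fails ($\norm{u}_{p_1+2}$ is then no longer controlled by $\norm{\nabla u}_2$ and $\norm{u}_2$ alone). There one must keep the three-term inequality \eqref{eq4}, and the difficulty is that both the identity from $\Phi(u)=0$ and \eqref{eq4} are homogeneous of degree one in the triple $\big(\norm{\nabla u}_2^2,\norm{u}_{p_0+2}^{p_0+2},\norm{u}_{p_1+2}^{p_1+2}\big)$: combining them yields only the comparability $\norm{\nabla u}_2^2\asymp_m\norm{u}_{p_0+2}^{p_0+2}\asymp_m\norm{u}_{p_1+2}^{p_1+2}$ (together with a threshold constraint on $m$), but cannot fix the common magnitude, so no absolute lower bound follows from these relations alone. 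To break this homogeneity I would argue by contradiction: for a sequence with $M(u_n)=m$, $\Phi(u_n)=0$ and $\norm{\nabla u_n}_2\to0$, the above comparability forces $\norm{u_n}_{p_0+2}\to0$ and $\norm{u_n}_{p_1+2}\to0$, a vanishing/spreading scenario, which I would then exclude by rescaling to unit gradient and invoking the mass constraint $\norm{u_n}_2^2=m$ through a concentration-compactness (profile) argument; this is where the real work lies.
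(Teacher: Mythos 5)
Your argument is complete and correct in the range $d\le 2$ or $p_1\le\frac{4}{d-2}$, where it follows essentially the paper's route: drop (or keep) the favorable $p_0$-term, and close with the mass--gradient Gagliardo--Nirenberg inequality whose exponent $\frac{dp_1}{2}>2$ breaks the scaling. The genuine gap is the case $d\ge 3$, $p_1>\frac{4}{d-2}$, which is within the scope of the claim (only $\frac{4}{d}<p_1<p_0$ is assumed). There you correctly diagnose the obstruction --- the identity $\Phi(u)=0$ and the inequality \eqref{eq4} are both homogeneous of degree one in the triple $\big(\norm{\nabla u}_2^2,\norm{u}_{p_0+2}^{p_0+2},\norm{u}_{p_1+2}^{p_1+2}\big)$ and therefore only yield the comparability of the three quantities --- but the concentration-compactness argument you propose to break this homogeneity is not carried out, and as you acknowledge, it is where the real work would lie. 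As written, the proof does not establish the claim in this range.

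The paper resolves this case with one additional elementary inequality that is \emph{not} degree-one homogeneous in the triple. Since $\frac{2d}{d-2}<p_1+2<p_0+2$, H\"older interpolation followed by Sobolev gives
\[
\norm{u}_{p_1+2}\le\norm{u}_{\frac{2d}{d-2}}^{\theta}\norm{u}_{p_0+2}^{1-\theta}\lesssim\norm{\nabla u}_2^{\theta}\norm{u}_{p_0+2}^{1-\theta},
\qquad
\frac{1}{p_1+2}=\frac{\theta(d-2)}{2d}+\frac{1-\theta}{p_0+2},\quad \theta\in(0,1).
\]
Inserting the comparability $\norm{u}_{p_1+2}^{p_1+2}\approx\norm{u}_{p_0+2}^{p_0+2}\approx\norm{\nabla u}_2^2$ (with constants depending on $m$) turns this into
\[
\norm{\nabla u}_2^{\frac{2}{p_1+2}}\lesssim\norm{\nabla u}_2^{\theta+\frac{2(1-\theta)}{p_0+2}},
\]
and since $\frac{2}{p_1+2}=\frac{\theta(d-2)}{d}+\frac{2(1-\theta)}{p_0+2}<\theta+\frac{2(1-\theta)}{p_0+2}$, the exponent on the right strictly exceeds the one on the left, whence $\norm{\nabla u}_2\gtrsim_{d,p_0,p_1,m}1$. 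So the homogeneity obstruction you flagged is real but is broken by a one-line interpolation rather than a profile decomposition; you should replace the sketched compactness argument by this (or an equivalent) scaling-breaking inequality.
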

\begin{proof}
Using $\Phi(u)=0$ and Gagliardo-Nirenberg inequality \eqref{eq4}, we have
\[
\frac{dp_0}{2(p_0+2)}\norm{u}^{p_0+2}_{p_0+2}+\norm{\nabla u}^2_2 =\frac{dp_1}{2(p_1+2)}\norm{u}^{p_1+2}_{p_1+2}\lesssim_{d,p_1,p_0,m}\norm{\nabla u}^{2(1-\theta)}_2\norm{u}^{\theta(p_0+2)}_{p_0+2}. 
\]
Thus, $\norm{u}^{p_0+2}_{p_0+2}\approx \norm{u}^{p_1+2}_{p_1+2}\approx \norm{\nabla u}^2_2$. If $p_1<\frac{4}{d-2}$ then 
\[
\norm{u}_{p_1+2}\lesssim \norm{u}_2^{1-\frac{dp_1}{2(p_1+2)}}\norm{\nabla u}_2^{\frac{dp_1}{2(p_1+2)}}.
\]
Then, 
\[
\norm{\nabla u}_2^2 \approx\norm{u}_{p_1+2}^{p_1+2}\lesssim \norm{\nabla u}_2^{\frac{dp_1}{2}}.
\]
It implies that $\norm{\nabla u}_2 \gtrsim_{d,p_1,p_0,m} 1$ since $p_1> 4/d$.\\
If $p_1=\frac{4}{d-2}$ then 
\[
\norm{\nabla u}_2^{\frac{2}{p_1+2}}\lesssim \norm{u}_{p_1+2}\lesssim \norm{\nabla u}_2.
\]
It implies that $\norm{\nabla u}_2 \gtrsim_{d,p_1,p_0,m} 1$. \\
If $p_1>\frac{4}{d-2}$ then 
\[
\norm{u}_{p_1+2}\lesssim \norm{u}_{\frac{2d}{d-2}}^{\theta}\norm{u}_{p_0+2}^{1-\theta}\lesssim \norm{\nabla u}_2^{\theta}\norm{u}_{p_0+2}^{1-\theta},
\]
where $\theta\in (0,1)$ satisfies 
\[
\frac{1}{p_1+2}=\frac{\theta}{2d/(d-2)}+\frac{1-\theta}{p_0+2}.
\] 
Thus,
\[
\norm{\nabla u}_2^{\frac{2}{p_1+2}}\lesssim \norm{\nabla u}_2^{\theta}\norm{\nabla u}_2^{\frac{2(1-\theta)}{p_0+2}}.
\]
We see that $\frac{2}{p_1+2}<\theta+\frac{2(1-\theta)}{p_0+2}$. Thus, $\norm{\nabla u}_2 \gtrsim_{d,p_1,p_0,m} 1$. This completes the proof.
\end{proof}

Define
\[
I(m)=\inf_{\begin{matrix} u \in H^1(\R^d) \cap L^{p_0+2}(\R^d)\\ \int_{\R^d} |u|^2\,dx =m \end{matrix}} E(u).
\]
The following Theorem is essentially contained in \cite{LeNo20}.
\begin{theorem}\label{thm1}
Let $d \geq 1$ and $p_0>p_1>\frac{4}{d}$. The function $m \mapsto I(m)$ is concave non-increasing over $[0,\infty)$. There exists $m_c>0$ such that it satisfies
\begin{itemize}
\item[•] $I(m)=0$ for all $0\leq m\leq m_c$,
\item[•] $m\mapsto I(m)$ is negative and strictly decreasing on $(m_c,\infty)$. 
\end{itemize}
The problem $I(m)$ admits at least one positive, radial, decreasing minimizer $u$ for every $m\geq m_c$.
Any minimizer $u$ solves the Euler-Lagrange equation 
\begin{equation}\label{eq3.1}
-\Delta u=-u^{p_0+1}+u^{p_1+1}-\omega u,
\end{equation}
for some $\omega \in (0,\omega_*)$, hence must be equal to $Q_{\omega}$, where $\omega_*,Q_{\omega}$ are defined in the introduction. The minimizer in the case $m=m_c$ is an optimizer for the Gagliardo-Nirenberg inequality \eqref{eq4}.

The infimum is not attained for $m<m_c$.  

For $0<m<m_c$, there exists $\delta(m)>0$ such that for any $u \in H^1(\R^d)\cap L^{p_0+2}(\R^d)$ such that $\int_{\R^d} |u|^2\,dx=m < m_c$, one has $E(u)\geq \delta(m)\|\nabla u\|^2_2$. 

For each minimizer $u$ of the problem $I(m)$ satisfies the Pohozaev identity: $\Phi(u)=0$.
\end{theorem}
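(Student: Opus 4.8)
The plan is to derive the structural statements on the minimization problem $I(m)$ directly from \cite{LeNo20}. Specifically, the concavity and monotonicity of $m\mapsto I(m)$, the existence of the critical mass $m_c$, the existence of a positive radial decreasing minimizer for $m\geq m_c$, the Euler--Lagrange equation \eqref{eq3.1} with its identification with $Q_\omega$ for some $\omega\in(0,\omega_*)$, the characterization of the $m=m_c$ minimizer as a Gagliardo--Nirenberg optimizer for \eqref{eq4}, and the non-attainment for $m<m_c$, are all contained in their Theorem~6 and Theorem~2. The dichotomy $I(m)=0$ on $[0,m_c]$ versus $I(m)<0$ and strictly decreasing on $(m_c,\infty)$ is then a soft consequence of concavity: one has $I(0)=0$, $I\leq 0$ everywhere (letting a mass-preserving dilation spread out, so $E\to 0^+$), and $I(m)<0$ for some finite $m$ (a suitable rescaling making the focusing $p_1$-term dominate), so $\{I=0\}$ is a bounded interval $[0,m_c]$ and concavity forces strict decay beyond $m_c$. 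The two genuinely new ingredients to supply are the Pohozaev identity $\Phi(u)=0$ for minimizers and the coercivity estimate below the critical mass.

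For the Pohozaev identity I would use the mass-preserving dilation $u_\mu(x)=\mu^{d/2}u(\mu x)$, so that $\|u_\mu\|_2=\|u\|_2$ and the path $\mu\mapsto u_\mu$ stays on the constraint sphere through $u$ at $\mu=1$. A direct computation gives
\[
E(u_\mu)=\mu^2\|\nabla u\|_2^2-\frac{2}{p_1+2}\mu^{dp_1/2}\|u\|_{p_1+2}^{p_1+2}+\frac{2}{p_0+2}\mu^{dp_0/2}\|u\|_{p_0+2}^{p_0+2},
\]
and differentiating, $\tfrac12\frac{d}{d\mu}E(u_\mu)\big|_{\mu=1}=\Phi(u)$. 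Since $u$ is a constrained minimizer, $\mu=1$ is an interior minimum of the smooth function $\mu\mapsto E(u_\mu)$, hence its derivative vanishes and $\Phi(u)=0$. All norms are finite since $u\in H^1\cap L^{p_0+2}$, the $L^{p_1+2}$ norm being controlled by interpolation.

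For the coercivity I would combine the sharp form of \eqref{eq4} with a weighted Young inequality. Writing $A=\|\nabla u\|_2^2$, $B=\frac{2}{p_0+2}\|u\|_{p_0+2}^{p_0+2}$ and letting $C_{GN}$ be the best constant in \eqref{eq4}, the bound $A^{1-\theta}B^\theta\leq(1-\theta)^{1-\theta}\theta^\theta(A+B)$ turns \eqref{eq4} into
\[
\frac{2}{p_1+2}\|u\|_{p_1+2}^{p_1+2}\leq \kappa\, m^{(p_1-\theta p_0)/2}(A+B),
\]
for an explicit $\kappa=\kappa(C_{GN},\theta,p_0,p_1)$, where one checks $(p_1-\theta p_0)/2>0$. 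Hence $E(u)=A+B-\frac{2}{p_1+2}\|u\|_{p_1+2}^{p_1+2}\geq\bigl(1-\kappa\, m^{(p_1-\theta p_0)/2}\bigr)(A+B)$. The key point is that the critical-mass minimizer $u_c$ saturates this entire chain: it is a Gagliardo--Nirenberg optimizer (equality in \eqref{eq4}), and its Pohozaev balance $\Phi(u_c)=0$ is exactly the equality case $\theta A=(1-\theta)B$ of the Young step; since moreover $E(u_c)=I(m_c)=0$ with $A+B>0$, we conclude $\kappa\, m_c^{(p_1-\theta p_0)/2}=1$. Therefore $\delta(m):=1-(m/m_c)^{(p_1-\theta p_0)/2}>0$ for $m<m_c$, and since $B\geq 0$ we obtain $E(u)\geq\delta(m)(A+B)\geq\delta(m)\|\nabla u\|_2^2$.

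The main obstacle is not in these last two computations but in the inputs imported from \cite{LeNo20}: the existence of minimizers requires a concentration--compactness analysis of minimizing sequences (ruling out vanishing and dichotomy, which is where $p_1>\frac4d$ and the defocusing higher power $p_0$ are essential) and the identification of the $m_c$-minimizer with the Gagliardo--Nirenberg optimizer. I would also need to make the identity $\kappa\, m_c^{(p_1-\theta p_0)/2}=1$ fully rigorous by checking that the equality cases in \eqref{eq4} and in the Young inequality are simultaneously realized by $u_c$ --- precisely where the Pohozaev identity of the previous step enters --- and to record that non-attainment for $m<m_c$ follows since any minimizer would have to be a Gagliardo--Nirenberg optimizer, and hence carry the fixed mass $m_c$.
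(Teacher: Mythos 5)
Your proposal is correct in substance for $d\geq 2$, but on the two points that the paper actually has to prove it takes a genuinely different route. For the Pohozaev identity $\Phi(u)=0$ you differentiate $\mu\mapsto E(\mu^{d/2}u(\mu\cdot))$ along the mass-preserving dilation at the interior minimum $\mu=1$; the paper instead derives it from the Euler--Lagrange equation via a multiplier argument (pairing with $x\cdot\nabla\bar u+\tfrac d2\bar u$, its Claim \ref{Cl:Pohozaev}), which requires justifying the integration by parts for radial or locally bounded $u$. Your dilation argument is cleaner and avoids that regularity discussion. For the coercivity $E(u)\geq\delta(m)\|\nabla u\|_2^2$ the paper uses a one-line rescaling: set $v(x)=u(\beta x)$ with $\beta^d=m/m_c$, so $M(v)=m_c$, $E(v)=\tfrac{m_c}{m}\bigl((\beta^2-1)\|\nabla u\|_2^2+E(u)\bigr)\geq 0$, giving $\delta(m)=1-(m/m_c)^{2/d}$; this needs only $I(m_c)=0$, not attainment, not the sharp Gagliardo--Nirenberg constant, and not the optimizer characterization. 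Your sharp-constant/Young saturation argument is correct (the equality case $\theta A=(1-\theta)B$ does follow, as you note, from $E(u_c)=0$ \emph{and} $\Phi(u_c)=0$ combined, not from $\Phi(u_c)=0$ alone), but it rests on the full strength of the $m=m_c$ optimizer statement and is therefore considerably heavier than necessary.

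The genuine gap is the case $d=1$. The theorem is stated for $d\geq 1$, but \cite[Theorem 6]{LeNo20}, from which you import all the structural facts (existence of $m_c>0$, attainment for $m\geq m_c$, strict monotonicity, the optimizer characterization), is a $d\geq 2$ result. The paper's proof devotes its second half precisely to supplying the missing one-dimensional arguments: continuity of $I$ via the reformulation $I(m)=mJ(m^{-2})$, the positivity of $m_c$ by a contradiction argument combining the Euler--Lagrange identity with the Gagliardo--Nirenberg inequality \eqref{eq4} and Young's inequality at small mass, and strict monotonicity on $(m_c,\infty)$ by showing that a flat stretch would force a zero-multiplier critical point whose two virial-type identities are incompatible. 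Your soft concavity argument for the dichotomy is fine once $m_c>0$ is known, but it does not by itself produce $m_c>0$ (concavity, $I(0)=0$ and $I\leq 0$ are consistent with $I<0$ for all $m>0$), so for $d=1$ you would need to reproduce something like the paper's small-mass contradiction argument rather than cite \cite{LeNo20}.
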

\begin{proof}
 In the case $d\geq 2$, this is \cite[Theorem 6]{LeNo20}, except for the claim that there exists $\delta(m)$ such that $E(u)\geq \delta(m)\|\nabla u\|^2_{2}$ if $\int |u|^2dx=m<m_c$. To prove this point, we let $v(x)=u(\beta x)$, where $\beta^d=\frac{m}{m_c}<1$.
Then $\int |v|^2=m_c$ and $E(v)=\frac{m_c}{m}((\beta^2-1) \norm{|\nabla u|}^2_2+E(u))$.
Since $E(v)\geq 0$, we obtain $E(u)\geq (1-\beta^2)\norm{\nabla u}^2_2$. 

Let $u$ be a minimizer of $I(m)$. Then $u$ solves the following Euler-Lagrange equation:
$$-\Delta u= -|u|^{p_0}u+|u|^{p_1}u-\lambda u,$$
for some $\lambda\in\R$. By Claim \ref{Cl:Pohozaev}, we have $\Phi(u)=0$.

It remains to consider the case $d=1$. The existence and uniqueness of solution for elliptic equation \eqref{eq3.1} are proved in \cite{BeLi83a}, \cite[Proposition 2.1]{LiTsZw21}, \cite[Theorem 8.1.4]{Ca03}. Thus, we only need to show the existence of such $m_c$ in the case $d=1$. It is classical that $I$ is non-positive and non-increasing. We show that $I$ is continuous on $(0,\infty)$. Indeed, as in \cite[Proof of Theorem 6]{LeNo20}, we can rewrite 
    \[
    I(m)=m J(m^{-2}),
    \]
    where 
    \[
    J(\lambda):=\inf_{\begin{matrix} u \in H^1(\R) \cap L^{p_0+2}(\R)\\ \int_{\R} |u|^2\,dx =1 \end{matrix}} H_{\lambda}(u),
    \]
    where 
    \[
    H_{\lambda}(u):=\frac{\lambda}{2}\int |\partial_x u|^2 +\frac{1}{p_0+2}\int |u|^{p_0+2}-\frac{1}{p_1+2}\int |u|^{p_1+2}.
    \]
    It suffices to show that $J$ is continuous on $(0,\infty)$. Let $\lambda \in (0,\infty)$ and $\lambda_n\rightarrow \lambda$ as $n\rightarrow\infty$. Let $u_n$ be such that $M(u_n)=1$, $J(\lambda_n) +2^{-n}\geq H_{\lambda_n}(u_n)$. It is clear that $u_n$ is uniformly bounded in $H^1(\R)\cap L^{p_0+2}(\R)$. Thus, for $n$ large,
    \[
    J(\lambda_n)+2^{-n}\geq H_{\lambda_n}(u_n)\geq H_{\lambda}(u_n)+o(1)\geq J(\lambda)+o(1)
    \]
    which implies that
    \[
    J(\lambda)\leq \liminf_{n\rightarrow\infty}J(\lambda_n).
    \]
    Thus, it suffices to show that
    \[
    J(\lambda) \geq \limsup_{n\rightarrow\infty}J(\lambda_n).
    \]
    Let $\eps>0$, $u\in H^1(\R)\cap L^{p_0+2}(\R)$ be such that $M(u)=1$, $J(\lambda)\geq H_{\lambda}(u)-\eps$. We have, for $n$ large enough,
    \[
    J(\lambda)\geq H_{\lambda}(u)-\eps=H_{\lambda_n}(u)-\eps+o(1) \geq J(\lambda_n)-\eps+o(1),
    \]
    which implies the desired result, letting $n\to\infty$ then $\eps\to 0$.\\
    
    Next, we show that there exists $m_c>0$ such that $I(m_c)=0$. We argue by contradiction. Assume that $I(m)<0$ for all $m>0$. We fix $m>0$ close to $0$ and consider a minimizing sequence $(u_n)_n$, with $u_n\in H^1(\R)$, $\int |u_n|^2=m$ and $\lim_n E(u_n)=I(m)$. Using Schwarz rearrangement we can assume $u_n$ nonnegative, even, and decreasing for $x>0$.  By the inequality $u^{p_0+2}+Cu^2\geq u^{p_1+2}$, we see that $(u_n)_n$ is bounded in $H^1$. Extracting subsequences, we can assume that $(u_n)_n$ converges weakly to some $u\in H^1$. The convergence is strong in $L^{p_0+2}\cap L^{p_1+2}$. Thus 
    \[ E(u)\leq I(m),\quad m'=\int |u|^2\leq m\]
    Since $I$ is nonincreasing, we deduce that $I(m')\leq E(u)\leq I(m)\leq I(m')$. Thus $u$ is a minimizer for $I(m')$. If $m'=0$, then the equality $I(m)=I(m')=0$ shows that $I=0$ close to $0$ and we are done. If $m'>0$,
$u$ is a positive minimizer of $I(m')$. Let $\omega$ be the associated Lagrange multiplier. Then $\omega \in (0,\omega_*)$ and
    \[
    \partial_{xx} u - u^{p_0+1}+u^{p_1+1}-\omega u=0.
    \]
    This implies that
    \begin{equation}
    \label{eq_ENS}
        \int |\partial_x u|^2 + \omega \int |u|^2=\int |u|^{p_1+2}-\int |u|^{p_0+2}.
    \end{equation}
    where all integrals take over the real line. 
    By the Gagliardo-Nirenberg type inequality \eqref{eq4}  and since $m'\leq m$, we have
    \[ \int |u|^{p_1+2}\leq Cm^{\frac{p_1-\theta p_0}{2}} \|\partial_x u\|_{L^2}^{2(1-\theta)} \|u\|_{L^{p_0+2}}^{\theta(p_0+2)}.\]
    Combining with Young's inequality $a^{\theta}b^{1-\theta}\leq \theta a+(1-\theta)b$, we obtain
    \[ \int |u|^{p_1+2}\leq \frac{1}{2}\int |u|^{p_0+2}+Cm^{\frac{p_1-\theta p_0}{2(1-\theta)}}\int (\partial_xu)^2.\]
    With \eqref{eq_ENS}, we deduce
    \[ \int |\partial_xu|^2 \leq Cm^{\frac{p_1-\theta p_0}{2(1-\theta)}}\int (\partial_xu)^2 -\frac{1}{2}\int |u|^{p_0+2}.\] 
    This gives a contradiction if $m$ is small enough. Hence, there exists $m_c>0$ such that $I=0$ on $(0,m_c)$ and $I(m)<0$ if $m>m_c$. It remains to prove that $I$ is strictly decreasing on $(m_c,\infty)$. Assume by contradiction that there exist $m_c<m_1<m_2$ such that $I(m_1)=I(m_2)$. 
    
    Let $m\in (m_1,m_2)$ and $u_n$ be a minimizer sequence of $I(m)$. We can assume that $u_n$ is positive, radial, decreasing. It is classical that $u_n$ is uniformly bounded in $H^1(\R) \cap L^{p_0+2}(\R)$ and hence $u_n$ converges weakly to $u$ in $H^1(\R)$ and strongly in $L^q$ for each $q\in (2,\infty)$. Thus, 
    \begin{align*}
        E(u) &\leq \liminf_{n\rightarrow \infty} E(u_n) = I(m),\\
        M(u) &\leq  \liminf_{n\rightarrow \infty} M(u_n)=m,
    \end{align*}
    and hence, $u$ is a minimizer of the following problem
    \[
    \inf \{ E(v): v\in H^1(\R)\cap L^{p_0+2}(\R), M(v)\in (0,m_2)\}.
    \]
    Thus, $u$ is a critical point of $E$:
    \[
    \partial_{xx} u - u^{p_0+1}+ u^{p_1+1}=0.
    \]
    This shows that
    \begin{align*}
        \int |\partial_x u|^2&=\int |u|^{p_1+2}-\int |u|^{p_0+2},\\
        -\int |\partial_x u|^2 &=\frac{2}{p_1+2}\int |u|^{p_1+2}-\frac{2}{p_0+2}\int |u|^{p_0+2}.
    \end{align*}
    Hence,
    \begin{align*}
        \frac{p_1+4}{p_1+2}\int |u|^{p_1+2}&=\frac{p_0+4}{p_0+2}\int |u|^{p_0+2},\\
        2\int |\partial_x u|^2 &= \frac{p_1}{p_1+2}\int |u|^{p_1+2}-\frac{p_0}{p_0+2}\int |u|^{p_0+2}\\
        &=\int |u|^{p_0+2} \frac{4p_1-4p_0}{(p_0+2)(p_1+2)}<0,
    \end{align*}
    which gives a contradiction. Hence, $I$ is strictly decreasing on $(m_c,\infty)$ and the proof is completed.
\end{proof}
Recall from \eqref{def_tilde_m} the definition of $\tilde{m}_c$. In the remainder of this subsection, we will prove:
\begin{theorem}
 \label{thm3}
 There exists a nonincreasing function $\mathsf{e}:(0,m_c)\to [0,\infty]$ such that $\mathsf{e}(m)=\infty$ if $0< m< \tilde{m}_c$, and, for all $u\in H^1(\R^d)\cap L^{p_0+2}(\R^d)$ such that $M(u)=m\in (0,m_c)$, $0<E(u)<\mathsf{e}(m)$, one has
$$\Phi(u)-|P(u)|^2/M(u)>0.$$
\end{theorem}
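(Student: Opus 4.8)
The plan is to remove the momentum first by a Galilean boost, reducing the claim to a momentum-free positivity property of $\Phi$, and then to construct the function $\mathsf{e}$ and prove that property through a scaling analysis of the energy combined with the sharp Gagliardo--Nirenberg inequality \eqref{eq4}. To carry out the reduction I would record the effect of the boost $v_\xi=e^{ix\cdot\xi}u$, $\xi\in\R^d$. Since $|v_\xi|=|u|$, the mass and the two nonlinear terms are unchanged, while $\norm{\nabla v_\xi}_2^2=\norm{\nabla u}_2^2+2\xi\cdot P(u)+|\xi|^2M(u)$, so that
\[
\Phi(v_\xi)=\Phi(u)+2\xi\cdot P(u)+|\xi|^2M(u),\qquad E(v_\xi)=E(u)+2\xi\cdot P(u)+|\xi|^2M(u).
\]
Taking $\xi_*=-P(u)/M(u)$ yields $P(v_{\xi_*})=0$ together with $\Phi(v_{\xi_*})=\Phi(u)-|P(u)|^2/M(u)$ and $E(v_{\xi_*})=E(u)-|P(u)|^2/M(u)\leq E(u)$. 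It therefore suffices to prove the momentum-free statement that $\Phi(w)>0$ whenever $M(w)=m\in(0,m_c)$ and $0<E(w)<\mathsf{e}(m)$: applying it to $w=v_{\xi_*}$, which still has mass $m<m_c$ and energy $0<E(v_{\xi_*})\leq E(u)<\mathsf{e}(m)$ (the positivity of $E(v_{\xi_*})$ being automatic since $I(m)=0$ and $E\geq\delta(m)\norm{\nabla\, \cdot\,}_2^2$ for mass $m<m_c$ by Theorem \ref{thm1}), gives $\Phi(u)-|P(u)|^2/M(u)=\Phi(v_{\xi_*})>0$.

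For the momentum-free statement I would set $\mathsf{e}(m)=\inf\{E(v):M(v)=m,\ \Phi(v)=0\}$, with the convention $\inf\emptyset=+\infty$, and study the mass-preserving rescaling $w_\lambda(x)=\lambda^{d/2}w(\lambda x)$. The function $f(\lambda)=E(w_\lambda)=\norm{\nabla w}_2^2\,\lambda^2-\tfrac{2}{p_1+2}\norm{w}_{p_1+2}^{p_1+2}\,\lambda^{dp_1/2}+\tfrac{2}{p_0+2}\norm{w}_{p_0+2}^{p_0+2}\,\lambda^{dp_0/2}$ satisfies $f(0^+)=0$, $f>0$ on $(0,\infty)$ (because $M(w_\lambda)=m<m_c$ forces $E\geq\delta(m)\norm{\nabla w_\lambda}_2^2>0$), and the virial identity $\Phi(w_\lambda)=\tfrac{\lambda}{2}f'(\lambda)$. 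As $dp_1/2>2$ and $dp_0/2>2$ (this is where the $L^2$-supercriticality $p_1>4/d$ enters), $f'(\lambda)=\lambda\,h(\lambda)$ with $h$ decreasing then increasing, so $f'$ has at most two sign changes. If $\Phi(w)\leq0$ then $f'(1)\leq0$ and there is a larger zero $\lambda_2\geq1$ of $f'$ with $f$ nonincreasing on $[1,\lambda_2]$; then $w_{\lambda_2}$ obeys $M(w_{\lambda_2})=m$, $\Phi(w_{\lambda_2})=0$, $E(w_{\lambda_2})=f(\lambda_2)\leq f(1)=E(w)$, hence $\mathsf{e}(m)\leq E(w)$, contradicting $E(w)<\mathsf{e}(m)$. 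Thus $\Phi(w)>0$.

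It remains to establish the properties of $\mathsf{e}$. Positivity $\mathsf{e}(m)>0$ follows from $E(v)\geq\delta(m)\norm{\nabla v}_2^2$ and Claim \ref{Cl:nabla}, which bounds $\norm{\nabla v}_2^2$ below on $\{M=m,\ \Phi=0\}$. To locate the threshold I would show that $\{v\neq0:M(v)=m,\ \Phi(v)=0\}$ is empty exactly for $m<\tilde{m}_c$: bounding the focusing term by the sharp form of \eqref{eq4} and then invoking the weighted arithmetic--geometric inequality $X+\kappa_0 Y\geq\frac{\kappa_0^{\theta}(1-\theta)^{\theta-1}}{\theta^{\theta}}X^{1-\theta}Y^{\theta}$, with $X=\norm{\nabla v}_2^2$, $Y=\norm{v}_{p_0+2}^{p_0+2}$ and $\kappa_0=\frac{dp_0}{2(p_0+2)}$, gives $\Phi(v)\geq\bigl(m^*-\kappa_1 M(v)^{\beta}\bigr)X^{1-\theta}Y^{\theta}$ where $m^*=\frac{\kappa_0^{\theta}(1-\theta)^{\theta-1}}{\theta^{\theta}}$, $\kappa_1=\frac{dp_1}{2(p_1+2)}C_{GN}$ and $\beta=\frac{p_1-\theta p_0}{2}>0$; this is positive precisely when $M(v)<(m^*/\kappa_1)^{1/\beta}$. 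Finiteness of $\mathsf{e}$ for $m\geq\tilde{m}_c$ follows by realizing $\Phi=0$ at every such mass through the two-parameter rescalings $a\,Q_{\omega_c}(b\,\cdot)$ of the ground state, and monotonicity of $\mathsf{e}$ on $[\tilde{m}_c,m_c)$ by adjoining to a near-optimal $v$ a far-away, widely spread bump carrying the extra mass while contributing $o(1)$ to both $E$ and $\Phi$, and then restoring $\Phi=0$ by an arbitrarily small rescaling.

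The main obstacle will be the sharpness of the threshold, namely proving that the constant $(m^*/\kappa_1)^{1/\beta}$ produced above is exactly the explicit $\tilde{m}_c$ of \eqref{def_tilde_m}. This forces both the Gagliardo--Nirenberg inequality and the arithmetic--geometric inequality to be saturated by the same function, hence by a rescaled ground state; one must then eliminate the $Q_{\omega_c}$-dependent quantities hidden in $C_{GN}$ using the two relations $E(Q_{\omega_c})=0$ and $\Phi(Q_{\omega_c})=0$ (which fix the ratios between $\norm{\nabla Q_{\omega_c}}_2^2$, $\norm{Q_{\omega_c}}_{p_1+2}^{p_1+2}$ and $\norm{Q_{\omega_c}}_{p_0+2}^{p_0+2}$), leaving a constant depending only on $p_0$, $p_1$, $d$ and $m_c$.
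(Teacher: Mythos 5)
Your overall architecture coincides with the paper's: the Galilean boost $e^{ix\cdot\xi}u$ with $\xi=-P(u)/M(u)$ to kill the momentum, the definition $\mathsf{e}(m)=\inf\{E(v):M(v)=m,\ \Phi(v)=0\}$ (the paper's $I^{\Phi}(m)$), the mass-preserving rescaling $w_\lambda=\lambda^{d/2}w(\lambda\cdot)$ with $\Phi(w_\lambda)=\tfrac{\lambda}{2}\tfrac{d}{d\lambda}E(w_\lambda)$ to rule out $\Phi\leq 0$ (this is exactly Lemma \ref{lm90}), positivity of $\mathsf{e}$ via $E\geq\delta(m)\|\nabla\cdot\|_2^2$ and Claim \ref{Cl:nabla}, and monotonicity by grafting a widely spread low-energy bump onto a near-minimizer and restoring $\Phi=0$ by rescaling (the paper's \S\ref{ssub:monotonicity}). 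All of these steps are correct as you present them.

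The one genuine gap is the point you yourself flag as "the main obstacle": showing that the set $\{M(v)=m,\ \Phi(v)=0,\ v\neq 0\}$ is empty precisely for $m$ below the \emph{explicit} constant $\tilde{m}_c$ of \eqref{def_tilde_m}. Your route via the sharp Gagliardo--Nirenberg constant plus weighted AM--GM produces a threshold $(m^*/\kappa_1)^{1/\beta}$ expressed through $C_{GN}$, and identifying this with \eqref{def_tilde_m} requires extracting $C_{GN}$ from the Pohozaev relations of $Q_{\omega_c}$ and then verifying simultaneous saturation of both inequalities by a suitably rescaled optimizer; none of this is carried out, and without it you cannot assert $\mathsf{e}(m)=\infty$ for all $m<\tilde{m}_c$ as the theorem states. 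The paper avoids this computation entirely with a cleaner device: writing $u_{a,b}(x)=au(bx)$ and choosing $(a_0,b_0)$ so that the rescaled energy density matches the virial density, one gets the exact identity $E_{a_0,b_0}\equiv\Phi$, hence $\tilde{I}(m)=a_0^{-2}b_0^{d-2}\,I(a_0^2b_0^{-d}m)$, and the $\Phi$-threshold is \emph{by construction} $\tilde{m}_c=a_0^{-2}b_0^{d}\,m_c$, which a two-line computation of $a_0=(p_0/p_1)^{1/(p_0-p_1)}$ and $b_0^2=\tfrac{4}{dp_1}(p_0/p_1)^{p_1/(p_0-p_1)}$ shows equals \eqref{def_tilde_m}. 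This is Corollary \ref{cor2}, and it both closes your gap and supplies the existence, for every $m\geq\tilde{m}_c$, of a minimizer with $\Phi\leq 0$ needed for the finiteness and monotonicity steps, without invoking the two-parameter family $aQ_{\omega_c}(b\cdot)$. I would recommend replacing your sharp-constant computation by this rescaling identity.
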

\begin{remark}
\label{R:decreasing}
If $d\in \{1,2,3,4\}$ or $d\geq 5$ and $p_0\leq \frac{4}{d-2}$, the function $e$ is strictly decreasing on $[\tilde{m}_c,m_c]$. The case $d\geq 5$, $p_0>\frac{4}{d-2}$ is open.
\end{remark}

\subsection{Positivity of virial functional below critical mass}
Define
\[
\tilde{I}(m)=\inf_{\begin{matrix} u \in H^1(\R^d) \cap L^{p_0+2}(\R^d)\\ \int_{\R^d} |u|^2\,dx =m \end{matrix}} \Phi(u).
\]
Let $u \in H^1(\R^d) \cap L^{p_0+2}(\R^d)$ and $a,b>0$. Define
\[
u_{a,b}(x)=au(bx).
\]
We have 
\begin{equation*}
\int_{\R^d}|u_{a,b}(x)|^2\,dx=a^2b^{-d}\int_{\R^d}|u|^2\,dx,\quad
E(u_{a,b})
=a^2b^{2-d} E_{a,b}(u),
\end{equation*}
where
\[
E_{a,b}(u):=\int_{\R^d}|\nabla u|^2\,dx-\frac{2}{p_1+2}a^{p_1}b^{-2}\int_{\R^d}|u|^{p_1+2}\,dx+\frac{2}{p_0+2}a^{p_0}b^{-2}\int_{\R^d}|u|^{p_0+2}\,dx.
\]
We have
\begin{align*}
I(m)&=\inf_{\begin{matrix} u \in H^1(\R^d) \cap L^{p_0+2}(\R^d)\\ \int_{\R^d} |u_{a,b}(x)|^2\,dx =m \end{matrix}}E(u_{a,b})=a^2b^{2-d}\inf_{\begin{matrix} u \in H^1(\R^d) \cap L^{p_0+2}(\R^d)\\ \int_{\R^d} |u|^2\,dx =m a^{-2}b^d \end{matrix}} E_{a,b}(u),
\end{align*}
for all $a,b>0$.\\
We chose $a=a_0$, $b=b_0$ such that
\begin{equation*}
\frac{2}{p_1+2}a_0^{p_1}b_0^{-2}=\frac{dp_1}{2(p_1+2)},\quad
\frac{2}{p_0+2}a_0^{p_0}b_0^{-2}=\frac{dp_0}{2(p_0+2)},
\end{equation*}
that is
\begin{equation*}
a_0= \left(\frac{p_0}{p_1}\right)^{\frac{1}{p_0-p_1}},\quad
b_0^2=\frac{4}{dp_1}\left(\frac{p_0}{p_1}\right)^{\frac{p_1}{p_0-p_1}}.
\end{equation*}
By the definition of $a_0,b_0$, $E_{a_0,b_0}\equiv \Phi$. By the above, we have
\[
I\left(a_0^2b_0^{-d}m\right)=a_0^2b_0^{2-d}\inf_{\begin{matrix} u \in H^1(\R^d) \cap L^{p_0+2}(\R^d)\\ \int_{\R^d} |u|^2\,dx =m \end{matrix}} \Phi(u)=a_0^2b_0^{2-d} \tilde{I}(m).
\]
Define $\tilde{m}_c=(a_0^2b_0^{-d})^{-1}m_c$.

We have
\begin{equation*}
\tilde{m}_c<m_c\Leftrightarrow a^2_0>b_0^d\Leftrightarrow  \left(\frac{p_0}{p_1}\right)^{\frac{2}{p_0-p_1}}>\left(\frac{4}{dp_1}\right)^{\frac{d}{2}}\left(\frac{p_0}{p_1}\right)^{\frac{dp_1}{2(p_0-p_1)}}
\Leftrightarrow \left(\frac{p_0}{p_1}\right)^{\frac{p_1}{p_0-p_1}}<\left(\frac{dp_1}{4}\right)^{\frac{dp_1}{dp_1-4}}
\end{equation*}
\begin{equation}
\tilde{m}_c<m_c\iff f_1\left(\frac{p_0-p_1}{p_1}\right)<f_2\left(\frac{dp_1}{4}\right), \label{eq78}
\end{equation}
where $f_1(x)=(1+x)^{\frac{1}{x}}$ and $f_2(x)=x^{\frac{x}{x-1}}$. We see that $f_1$ is strictly decreasing on $(0,\infty)$ and $f_2$ is strictly increasing on $(1,\infty)$. Thus, 
\[
f_1\left(\frac{p_0-p_1}{p_1}\right)<\lim_{x\rightarrow 0}f_1(x)=e=\lim_{x\rightarrow 1}f_2(x)<f_2\left(\frac{dp_1}{4}\right)
\]
Then, \eqref{eq78} holds when $p_0>p_1>4/d$, and $\tilde{m}_c<m_c$.

From Theorem \ref{thm1}, we have the following result.
\begin{corollary}\label{cor2}
Let $d \geq 1$ and $p_0>p_1>\frac{4}{d}$. The function $m \mapsto \tilde{I}(m)$ is concave non-increasing over $[0,\infty)$. It satisfies
\begin{itemize}
\item[•] $\tilde{I}(m)=0$ for all $0\leq m\leq \tilde{m}_c$,
\item[•] $m\mapsto \tilde{I}(m)$ is negative and strictly decreasing on $(\tilde{m_c},\infty)$. The problem $\tilde{I}(m)$ admits at least one positive, radial, decreasing minimizer $\tilde{u}$ for every $m\geq \tilde{m}_c$.
\end{itemize}
For $0<m<\tilde{m}_c$, there exists $\tilde{\delta}(m)$ such that 
for any $u\in L^{p_0+2}\cap H^1$ with $\int |u|^2=m$,
$$\Phi(u)\geq \tilde{\delta}(m)\|\nabla u\|^2_{L^2}.$$
\end{corollary}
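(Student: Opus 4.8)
The plan is to deduce every assertion directly from the corresponding property of $I$ in Theorem \ref{thm1}, transported through the explicit scaling already computed above the statement. Writing $\kappa=a_0^2b_0^{-d}$ and $c=a_0^2b_0^{2-d}$, that computation gives the identity $I(\kappa m)=c\,\tilde{I}(m)$, equivalently $\tilde{I}(m)=\frac1c I(\kappa m)$, and by construction $\tilde{m}_c=\kappa^{-1}m_c$, so that $\kappa\tilde{m}_c=m_c$. Since $m\mapsto\kappa m$ is linear and $c>0$, concavity and monotonicity of $\tilde{I}$ over $[0,\infty)$ are inherited from those of $I$. For the sign, when $0\leq m\leq\tilde{m}_c$ we have $\kappa m\leq m_c$, hence $I(\kappa m)=0$ and $\tilde{I}(m)=0$; when $m>\tilde{m}_c$ we have $\kappa m>m_c$, so $I(\kappa m)$ is negative and strictly decreasing, and the same follows for $\tilde{I}$.

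For the existence of minimizers I would use that $u\mapsto u_{a_0,b_0}$ is a bijection of $H^1(\R^d)\cap L^{p_0+2}(\R^d)$, with inverse $v\mapsto v_{1/a_0,1/b_0}$, under which $\int|u_{a_0,b_0}|^2=\kappa\int|u|^2$ and $\Phi(u)=\frac1c E(u_{a_0,b_0})$ (recall $E_{a_0,b_0}\equiv\Phi$). Thus a minimizer $\tilde{u}$ of $I(\kappa m)$, which by Theorem \ref{thm1} exists and is positive, radial and decreasing whenever $\kappa m\geq m_c$, that is $m\geq\tilde{m}_c$, pulls back to a minimizer $\tilde{u}_{1/a_0,1/b_0}$ of $\tilde{I}(m)$; because $a_0,b_0>0$ the scaling preserves positivity, radiality and radial monotonicity.

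The coercivity estimate for $0<m<\tilde{m}_c$ is the only point needing a pointwise, rather than infimum-level, use of the scaling, but it is equally direct. Given $u$ with $\int|u|^2=m<\tilde{m}_c$, set $v=u_{a_0,b_0}$, so that $\int|v|^2=\kappa m<m_c$. One has the pointwise identities $\Phi(u)=\frac1c E(v)$ and $\norm{\nabla v}_{L^2}^2=c\,\norm{\nabla u}_{L^2}^2$, while Theorem \ref{thm1} supplies $\delta(\kappa m)>0$ with $E(v)\geq\delta(\kappa m)\,\norm{\nabla v}_{L^2}^2$. Combining these, the factors $c$ cancel: $\Phi(u)=\frac1c E(v)\geq\frac1c\,\delta(\kappa m)\,c\,\norm{\nabla u}_{L^2}^2=\delta(\kappa m)\,\norm{\nabla u}_{L^2}^2$, so $\tilde{\delta}(m):=\delta(\kappa m)$ works.

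Since the corollary is a transcription of Theorem \ref{thm1} through an explicit change of variables, there is no serious obstacle. The only care required is to keep the two distinct scaling exponents $\kappa$ and $c$ apart and to check that $\norm{\nabla\,\cdot\,}_{L^2}^2$ scales by exactly the same factor $c$ as $\Phi$, which is precisely what makes the coercivity constant transfer from $E$ to $\Phi$ without distortion.
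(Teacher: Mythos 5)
Your proposal is correct and follows exactly the route the paper intends: the corollary is obtained by transporting each assertion of Theorem \ref{thm1} through the scaling identity $I(a_0^2b_0^{-d}m)=a_0^2b_0^{2-d}\,\tilde{I}(m)$ established just before the statement, and your bookkeeping of the two factors $\kappa=a_0^2b_0^{-d}$ and $c=a_0^2b_0^{2-d}$, including the cancellation in the coercivity estimate, is accurate. Nothing is missing.
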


\subsection{Positivity of virial functional below a critical energy, at fixed mass}
As before, we assume $\frac{4}{d}<p_1<p_0$ and thus $\tilde{m}_c<m_c$. Define
\[
I^{\Phi}(m)=\inf_{\begin{matrix} u \in H^1(\R^d) \cap L^{p_0+2}(\R^d)\\ M(u)=m,\,\Phi(u)=0 \end{matrix}} E(u).
\]
If there is no function $u \in H^1(\R^d) \cap L^{p_0+2}(\R^d)$ such that $M(u)=m$ and $\Phi(u)=0$ then we let $I^{\Phi}=\infty$. \\
Define
\[
\mathcal{R}=\{(m,e): 0<m<m_c,\, 0<e<I^{\Phi}(m)\}.
\]
As in \cite[Theorem 5.2]{KiOhPoVi17}, we have the following result.
\begin{theorem}
\label{thm90}
If $(M(u),E(u))\in \mathcal{R}$ for some $u\in H^1(\R^d) \cap L^{p_0+2}(\R^d)$ then $\Phi(u)>0$.
\begin{itemize}
\item $I^{\Phi}(m)=\infty$ when $0<m<\tilde{m}_c$.
\item $0<I^{\Phi}(m)<\infty$ when $\tilde{m}_c\leq m<m_c$.
\item $I^{\Phi}(m)=I(m)$ when $m\geq m_c$.
\end{itemize}
Moreover, $I^{\Phi}(m)$ is nonincreasing as a function of $m$ for $m\geq \tilde{m}_c$. 

If we further assume that $d=1,2,3,4$ or $p_0\leq \frac{4}{d-2}$, then the infimum $I^{\Phi}(m)$ achieved. Moreover, in these cases, $I^{\Phi}(m)$ is strictly decreasing and lower semicontinous on $[\tilde{m}_c,\infty)$.
\end{theorem}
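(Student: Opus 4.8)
The plan is to base everything on two families of scalings — the mass–preserving rescaling $u_\lambda(x)=\lambda^{d/2}u(\lambda x)$ and the two–parameter family $a\,u(b\cdot)$ already used above — together with Theorem~\ref{thm1}, Corollary~\ref{cor2} and Claim~\ref{Cl:nabla}. For $u\neq0$ one has $M(u_\lambda)=M(u)$, $\|\nabla u_\lambda\|_{L^2}^2=\lambda^2\|\nabla u\|_{L^2}^2$ and $\|u_\lambda\|_{L^{p+2}}^{p+2}=\lambda^{dp/2}\|u\|_{L^{p+2}}^{p+2}$; since $2<\tfrac{d}{2}p_1<\tfrac d2 p_0$, the map $\lambda\mapsto\Phi(u_\lambda)$ is positive near $0$ and near $+\infty$ and is negative exactly on an interval $(\lambda_-,\lambda_+)$ (it has a single minimum). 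As $\tfrac{d}{d\lambda}E(u_\lambda)=\tfrac2\lambda\Phi(u_\lambda)$, the function $\lambda\mapsto E(u_\lambda)$ is strictly decreasing on $(\lambda_-,\lambda_+)$. I would use this first for the pointwise statement: if $(M(u),E(u))\in\mathcal R$ but $\Phi(u)\le0$, then either $\Phi(u)=0$, so that $u$ is admissible for $I^{\Phi}(M(u))$ and $E(u)\ge I^{\Phi}(M(u))$, or $\Phi(u)<0$, so that $u_{\lambda_+}$ has the same mass, satisfies $\Phi(u_{\lambda_+})=0$ and $E(u_{\lambda_+})<E(u)$, whence again $E(u)>I^{\Phi}(M(u))$; both contradict $E(u)<I^{\Phi}(M(u))$. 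The same comparison gives the identity $I^{\Phi}(m)=\inf\{E(u):M(u)=m,\ \Phi(u)\le0\}$, which I use below.

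The three values of $I^{\Phi}$ are read off from the variational facts already established. If $0<m<\tilde m_c$, Corollary~\ref{cor2} gives $\Phi(u)\ge\tilde\delta(m)\|\nabla u\|_{L^2}^2>0$ for every $u$ with $M(u)=m$, so the constraint set is empty and $I^{\Phi}(m)=\infty$. If $\tilde m_c\le m<m_c$, the constraint set is nonempty because $\tilde I(m)\le0$ yields (by the rescaling above for $m>\tilde m_c$, and by the minimizer of $\tilde I$ for $m=\tilde m_c$) a function with $M=m$ and $\Phi=0$; positivity then follows by combining $E(u)\ge\delta(m)\|\nabla u\|_{L^2}^2$ from Theorem~\ref{thm1} (valid since $m<m_c$) with $\|\nabla u\|_{L^2}^2\gtrsim1$ from Claim~\ref{Cl:nabla} (valid since $\Phi(u)=0$), giving $I^{\Phi}(m)>0$. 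If $m\ge m_c$, then $I^{\Phi}(m)\ge I(m)$ because the admissible set is smaller, while a minimizer of $I(m)$ exists by Theorem~\ref{thm1} and satisfies the Pohozaev identity $\Phi=0$, hence is admissible for $I^{\Phi}(m)$; this gives $I^{\Phi}(m)\le I(m)$ and so equality.

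For the monotonicity I would produce, from a competitor at mass $m_1$, a competitor at any mass $m_2>m_1\ge\tilde m_c$ with no larger energy. Given $u$ with $M(u)=m_1$ and $\Phi(u)\le0$, the dilation $w=u(\cdot/L)$ with $L^d=m_2/m_1$ has $M(w)=m_2$ and $\Phi(w)=L^{d-2}\big[(1-L^2)\|\nabla u\|_{L^2}^2+L^2\Phi(u)\big]\le0$, so $I^{\Phi}(m_2)\le E(w)$. Moreover $\tfrac{d}{dL}E(u(\cdot/L))=L^{d-3}\big[(d-2)\|\nabla u\|_{L^2}^2+2dL^2\big(\tfrac1{p_0+2}\|u\|_{L^{p_0+2}}^{p_0+2}-\tfrac1{p_1+2}\|u\|_{L^{p_1+2}}^{p_1+2}\big)\big]$; the bracketed $L^2$–coefficient is negative (from $\Phi(u)\le0$ and $p_1<p_0$) and, when $p_0\le\tfrac4{d-2}$ or $d\le2$, the whole bracket is $\le0$ for $L\ge1$ by an elementary inequality using $\Phi(u)\le0$, so $E(w)\le E(u)$ and $I^{\Phi}(m_2)\le I^{\Phi}(m_1)$. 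In the energy–supercritical range the dilation no longer lowers the energy, and I would instead move along $a\,u(b\cdot)$ on the curve $a^2b^{-d}m_1=m_2$: a direct computation shows this curve always meets $\{\Phi<0\}$, so the crux — and the main obstacle — is the energy comparison along it. This is precisely where the dimensional restriction of Remark~\ref{R:decreasing} appears, and where in high dimension only the nonstrict inequality is obtained.

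Finally, under the hypothesis $d\le4$ or $p_0\le\tfrac4{d-2}$ I would obtain a minimizer of $I^{\Phi}(m)$ by concentration–compactness: a minimizing sequence is bounded in $H^1\cap L^{p_0+2}$ by \eqref{eq4} and $\Phi=0$, it cannot vanish by Claim~\ref{Cl:nabla}, and dichotomy is ruled out by the strict subadditivity furnished by the monotonicity above; after translating it converges to a minimizer with $\Phi=0$ and $E=I^{\Phi}(m)$. Applying the scaling competitor of the previous paragraph to this minimizer, where the inequalities are now strict, yields strict monotonicity on $[\tilde m_c,\infty)$, and lower semicontinuity follows from the precompactness of optimizing sequences. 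The least routine point throughout is the energy comparison for the two–parameter scaling in the energy–supercritical case, which is exactly what confines the strict statement to $d\le4$ or $p_0\le\tfrac4{d-2}$.
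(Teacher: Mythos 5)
Your treatment of the pointwise positivity and of the three values of $I^{\Phi}$ is correct and coincides with the paper's (Lemma \ref{lm90} for the case $\Phi(u)<0$, Corollary \ref{cor2} for $m<\tilde m_c$, Claim \ref{Cl:nabla} plus the coercivity $E\geq \delta(m)\|\nabla u\|_2^2$ for $\tilde m_c\leq m<m_c$, and the Pohozaev identity for $m\geq m_c$). The problems begin with monotonicity. The theorem asserts that $I^{\Phi}$ is nonincreasing on $[\tilde m_c,\infty)$ with \emph{no} restriction on $d$ or $p_0$, but your only competitor is the dilation $u(\cdot/L)$, whose energy comparison $E(u(\cdot/L))\leq E(u)$ you can justify only when $d\leq 2$ or $p_0\leq\frac{4}{d-2}$; in the energy-supercritical range you explicitly leave the comparison along $a\,u(b\cdot)$ as "the main obstacle," and you then misattribute the theorem's dimensional restriction to this point. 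The missing idea is different and elementary: starting from a near-minimizer $v$ with $\Phi(v)=0$, first set $v_b=v(b\cdot)$ with $b<1$ close to $1$ so that $\Phi(v_b)<0$ strictly while $M$ and $E$ move by $o(1)$, and then add the perturbation $\lambda^{d/2}\psi(\lambda x)$ with $\lambda\to 0$, which carries the exact missing mass $m'-M(v_b)$ but contributes $o(1)$ to $\|\nabla\cdot\|_{L^2}$ and to every $\|\cdot\|_{L^{p_j+2}}$, hence leaves $\Phi\leq 0$ and $E\leq I^{\Phi}(m)+\eps$. This works for all $d,p_0$ and is what the paper does; without it your proof of the unrestricted monotonicity statement is incomplete.

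The attainment and strict monotonicity parts also have gaps. Ruling out dichotomy by "the strict subadditivity furnished by the monotonicity above" is circular: at that stage only non-strict monotonicity is available, and since $I^{\Phi}>0$ on $[\tilde m_c,m_c)$ the usual subadditivity mechanism does not apply anyway. The paper instead takes a radial decreasing minimizing sequence (Schwarz rearrangement decreases both $E$ and $\Phi$), uses Strauss-type compactness to pass to a weak limit $v$ with $\Phi(v)\leq 0$, $E(v)\leq I^{\Phi}(m)$, $M(v)\leq m$, and then must exclude the genuinely delicate scenario $M(v)<m$. That exclusion is exactly where the hypothesis $d\in\{1,2,3,4\}$ or $p_0\leq\frac{4}{d-2}$ enters: for $d\leq 4$ one shows $v$ minimizes $E$ under the sole constraint $\Phi=0$, writes the Euler--Lagrange equation, and invokes the elliptic uniqueness result (Proposition \ref{P:uniqueness}) to force $v=0$, a contradiction; for $p_0\leq\frac{4}{d-2}$ one uses Lemma \ref{Lm:exist desired profile} to raise the mass while strictly lowering the energy on the constraint set. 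Your diagnosis that the restriction comes from the scaling energy comparison is therefore incorrect, and your dilation argument for strict monotonicity does not cover the case $d\in\{3,4\}$ with $p_0>\frac{4}{d-2}$, which the theorem includes; there the paper again relies on Proposition \ref{P:uniqueness} rather than on any scaling competitor.
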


Before proving Theorem \ref{thm90}, we prove that it is equivalent to consider the minimum of $E$ on the set $\{\Phi(u)=0\}$ and on the larger set $\{\Phi(u)\leq 0\}$.
\begin{lemma}
\label{lm90}
Suppose $u\in H^1(\R^d) \cap L^{p_0+2}(\R^d)$ is not identically zero and that \label{I:Phi<0} $\Phi(u)<0$
Then there exists $\lambda>1$ so that $u^{\lambda}(x)=\lambda^{\frac{d}{2}}u(\lambda x)$ obeys $\Phi(u^{\lambda})=0$ and $E(u^{\lambda})<E(u)$. Note that $M(u^{\lambda})=M(u)$ and $\int_{\R^d}|\nabla u^{\lambda}|^2\,dx=\lambda^2\int_{\R^d}|\nabla u|^2\,dx>\int_{\R^d}|\nabla u|^2\,dx$.
\end{lemma}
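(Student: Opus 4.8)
The plan is to use the scaling family $u^{\lambda}(x)=\lambda^{d/2}u(\lambda x)$ and track how the mass, the gradient term, and the two potential terms transform as functions of $\lambda$. Note immediately that the $L^2$ scaling is chosen precisely so that $M(u^{\lambda})=M(u)$ for all $\lambda>0$, which is what makes this family the natural one for a mass-constrained problem. First I would compute, for $u^{\lambda}$, that $\int|\nabla u^{\lambda}|^2=\lambda^2\int|\nabla u|^2$, while $\int|u^{\lambda}|^{p_j+2}=\lambda^{dp_j/2}\int|u|^{p_j+2}$ for $j\in\{0,1\}$. Substituting these into the definition of $\Phi$ gives
$$\Phi(u^{\lambda})=\lambda^2\int|\nabla u|^2-\frac{dp_1}{2(p_1+2)}\lambda^{dp_1/2}\int|u|^{p_1+2}+\frac{dp_0}{2(p_0+2)}\lambda^{dp_0/2}\int|u|^{p_0+2}.$$

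Set $\phi(\lambda):=\Phi(u^{\lambda})$ and observe the key structural fact: since $p_1<p_0$, the three exponents of $\lambda$ are strictly ordered as $2<dp_1/2<dp_0/2$ (using $p_1>4/d$). Thus $\phi$ has a negative coefficient on the middle term and positive coefficients on the extreme terms. I would next examine $\phi$ near the endpoints: as $\lambda\to 0^+$, $\phi(\lambda)\sim\lambda^2\int|\nabla u|^2>0$ (the lowest power dominates), so $\phi(\lambda)>0$ for small $\lambda$; as $\lambda\to+\infty$, $\phi(\lambda)\sim\frac{dp_0}{2(p_0+2)}\lambda^{dp_0/2}\int|u|^{p_0+2}>0$ (the highest power dominates), so $\phi(\lambda)\to+\infty$. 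Combined with the hypothesis $\phi(1)=\Phi(u)<0$, the intermediate value theorem yields a zero of $\phi$ in $(1,\infty)$; I would take $\lambda>1$ to be the \emph{smallest} such zero, so that $\Phi(u^{\lambda})=0$ as required.

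It then remains to show $E(u^{\lambda})<E(u)$. Here the clean approach is to differentiate the energy along the scaling and relate it to $\Phi$. Writing $e(\lambda):=E(u^{\lambda})$, the same scaling computation gives $e(\lambda)=\lambda^2\int|\nabla u|^2-\frac{2}{p_1+2}\lambda^{dp_1/2}\int|u|^{p_1+2}+\frac{2}{p_0+2}\lambda^{dp_0/2}\int|u|^{p_0+2}$, and a direct differentiation shows
$$\lambda\, e'(\lambda)=2\lambda^2\int|\nabla u|^2-\frac{dp_1}{p_1+2}\lambda^{dp_1/2}\int|u|^{p_1+2}+\frac{dp_0}{p_0+2}\lambda^{dp_0/2}\int|u|^{p_0+2}=2\,\phi(\lambda),$$
so that $e'(\lambda)=\frac{2}{\lambda}\Phi(u^{\lambda})$, the differential form of the virial identity \eqref{virial_ID}. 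Since on $(1,\lambda)$ the value $\lambda'=1$ gives $\phi(1)<0$ and $\lambda$ is chosen as the first zero after $1$, we have $\phi(\lambda')<0$ for all $\lambda'\in(1,\lambda)$, hence $e'(\lambda')<0$ there. Integrating from $1$ to $\lambda$ gives $E(u^{\lambda})=e(\lambda)<e(1)=E(u)$, as claimed. The final assertions about $M$ and the gradient norm are immediate from the scaling formulas, with the strict inequality $\int|\nabla u^{\lambda}|^2=\lambda^2\int|\nabla u|^2>\int|\nabla u|^2$ following from $\lambda>1$.

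The main obstacle, though it is a mild one, is the bookkeeping needed to guarantee that $\phi$ stays negative on the whole interval $(1,\lambda)$ rather than merely at the endpoints; this is what lets us conclude $e'<0$ throughout and hence strict monotonicity of the energy. Choosing $\lambda$ as the \emph{first} zero of $\phi$ beyond $1$ handles this cleanly, since $\phi$ is continuous and negative at $1$. One should double-check the strict ordering $2<dp_1/2<dp_0/2$ of the exponents (which uses $p_1>4/d$ and $p_0>p_1$) since the sign analysis near $\lambda=0$ and $\lambda=\infty$ relies on it; this is exactly the place where the $L^2$-supercriticality hypothesis enters.
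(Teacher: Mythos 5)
Your proof is correct and follows essentially the same route as the paper's: both use the scaling identity $\Phi(u^{\lambda})=\frac{\lambda}{2}\frac{d}{d\lambda}E(u^{\lambda})$, note that $\Phi(u^{\lambda})\to+\infty$ as $\lambda\to\infty$ (since the $\lambda^{dp_0/2}$ term dominates with positive coefficient), pick the smallest zero $\lambda_0>1$, and conclude that $E(u^{\lambda})$ is decreasing on $[1,\lambda_0)$. The extra analysis near $\lambda\to 0^+$ is harmless but unnecessary.
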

\begin{proof}
By direct computation, we have
\begin{equation}
\label{eq900}
\Phi(u^{\lambda})=\frac{\lambda}{2}\frac{d E(u^{\lambda})}{d\lambda}=\lambda^2\int_{\R^d}|\nabla u|^2\,dx-\frac{dp_1\lambda^{\frac{dp_1}{2}}}{2(p_1+2)}\int_{\R^d}|u|^{p_1+2}\,dx+\frac{dp_0\lambda^{\frac{dp_0}{2}}}{2(p_0+2)}\int_{\R^d}|u|^{p_0+2}\,dx.
\end{equation}
Suppose $\Phi(u)<0$. By \eqref{eq900}, $\lim_{\lambda\rightarrow \infty} \Phi(u^{\lambda})=\infty$. Thus, there exists $\lambda>1$ such that $\Phi(u^{\lambda})=0$. Let $\lambda_0$ be smallest such $\lambda$. It implies that from \eqref{eq900}, $E(u^{\lambda})$ is decreasing on $[1,\lambda_0)$. Thus, $E(u^{\lambda_0})<E(u)$.
\end{proof}

\begin{lemma}(\cite[Lemma 5.4]{KiOhPoVi17})
\label{Lm:exist desired profile}
Assume $d=1,2$ or $d\geq 3$ and $p_0\leq\frac{4}{d-2}$. Let $m>0$ and $u \in H^1(\R^d)$ be such that $0<M(u)<m$, $\Phi(u)=0$. Then there exists $v\in H^1(\R^d)$ satisfying: $M(v)=m$, $E(v)< E(u)$ and $\Phi(v)=0$.  
\end{lemma}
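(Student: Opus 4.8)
\emph{Strategy.} The plan is to produce $v$ inside the two–parameter scaling orbit of $u$, i.e. among the functions $v_{c,\lambda}(x)=c\,\lambda^{d/2}u(\lambda x)$, $c,\lambda>0$. The mass $M(v_{c,\lambda})=c^2M(u)$ depends only on $c$, so fixing a target mass amounts to fixing $c$, while $\lambda$ runs through the $L^2$–preserving dilations of Lemma \ref{lm90}, for which $\Phi(v_{c,\lambda})=\tfrac\lambda2\partial_\lambda E(v_{c,\lambda})$. The point $u$ itself is $c=\lambda=1$. I will travel along the constraint set $\{\Phi=0\}$ inside this orbit, starting from $u$ and raising the mass up to $m$, and show that the energy strictly decreases; the endpoint will then be the desired $v$.

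\emph{Reduction to one variable.} Writing $G=\|\nabla u\|_{L^2}^2$, $P_j=\|u\|_{L^{p_j+2}}^{p_j+2}$ and $q_j=\tfrac{dp_j}{2}-2>0$, the change of variable $t=c^{2/d}\lambda$ yields
\begin{gather*}
\Phi(v_{c,\lambda})=c^2\lambda^2\big(G-c^{4/d}H(t)\big),\qquad
E(v_{c,\lambda})=c^2\big(c^{-4/d}G\,t^2+\tilde h(t)\big),
\end{gather*}
where $H(t)=\tfrac{dp_1}{2(p_1+2)}P_1\,t^{q_1}-\tfrac{dp_0}{2(p_0+2)}P_0\,t^{q_0}$ and $\tilde h(t)=-\tfrac{2}{p_1+2}P_1\,t^{dp_1/2}+\tfrac{2}{p_0+2}P_0\,t^{dp_0/2}$. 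Since $\Phi(u)=0$ forces $H(1)=G$, the constraint $\Phi(v_{c,\lambda})=0$ reads $c^{4/d}=G/H(t)$, solvable exactly where $H(t)>0$. Substituting, the mass and energy on $\{\Phi=0\}$ become single–variable functions
\begin{gather*}
\mu(t)=M(u)\big(G/H(t)\big)^{d/2},\qquad
\widehat E(t)=\big(G/H(t)\big)^{d/2}\big(H(t)\,t^2+\tilde h(t)\big),
\end{gather*}
and one checks $\mu(1)=M(u)$, $\widehat E(1)=E(u)$.

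\emph{Monotonicity and the key cancellation.} Differentiating, $\mu'(t)$ has the sign of $-H'(t)$. For the energy I would use the virial identity $\tilde h'(t)=-2tH(t)$ (which is precisely $\Phi=\tfrac\lambda2\partial_\lambda E$ in these variables): it kills the term $2H^2t+H\tilde h'$ in $K'=H't^2+2Ht+\tilde h'$ (with $K=Ht^2+\tilde h$) and collapses $\widehat E'(t)$ to $G^{d/2}H^{-d/2-1}H'(t)\big(H(t)t^2-\tfrac d2 K(t)\big)$. Hence $\tfrac{d\widehat E}{d\mu}$ has the sign of $-\big(Ht^2-\tfrac d2 K\big)$, and the entire argument reduces to the inequality $Ht^2-\tfrac d2 K>0$ on the curve. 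Setting $X=\tfrac{dp_1}{2(p_1+2)}P_1 t^{q_1}$, $Y=\tfrac{dp_0}{2(p_0+2)}P_0 t^{q_0}$ (so $H=X-Y>0$, i.e. $X>Y>0$), this is equivalent to
\[
\Big(2-d+\tfrac{4}{p_1}\Big)X>\Big(2-d+\tfrac{4}{p_0}\Big)Y .
\]

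\emph{Where the hypothesis enters.} This inequality is the main obstacle, and it is exactly here that the assumption $d\in\{1,2\}$ or $p_0\leq\frac4{d-2}$ is used: it forces $c_0:=2-d+\tfrac4{p_0}\geq 0$, whence $c_1:=2-d+\tfrac4{p_1}>c_0\geq 0$, so $c_1X>c_1Y\geq c_0Y$ follows from $X>Y>0$. (If $p_0>\frac4{d-2}$ then $c_0<0$ and the inequality may fail, which is why the case is excluded.) Granting it, $\widehat E$ is strictly decreasing in $\mu$ along the branch of $\{\Phi=0\}$ through $t=1$; since $H(t)$ stays positive and $\mu(t)$ ranges up to $+\infty$ on that branch, the mass $m>M(u)$ is attained at some admissible point $v=v_{c,\lambda}$ with $M(v)=m$, $\Phi(v)=0$ and $E(v)<E(u)$, as required. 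The only bookkeeping point is to choose the direction of $t$ that raises the mass (the two branches meet at the minimal–mass value $t_\ast$ where $H'=0$), but the sign computation shows $\widehat E$ decreases as $\mu$ grows on either branch, so the conclusion is unaffected.
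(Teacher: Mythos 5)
Your proof is correct, and it takes the same route that the paper itself adopts by reference: the paper's ``proof'' of this lemma is only a pointer to \cite[Lemma 5.4]{KiOhPoVi17}, whose argument is exactly your two-parameter rescaling $c\,\lambda^{d/2}u(\lambda x)$ followed along the constraint $\{\Phi=0\}$. Your computations check out (in particular $\tilde h'=-2tH$, the collapse of $\widehat E'$ to $G^{d/2}H^{-d/2-1}H'\bigl(Ht^2-\tfrac d2K\bigr)$, and the reduction of the sign condition to $(2-d+\tfrac4{p_1})X>(2-d+\tfrac4{p_0})Y$, which is precisely where $d\leq 2$ or $p_0\leq\frac4{d-2}$ is needed), so you have in effect supplied the general-exponent details the paper leaves to the citation.
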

\begin{proof}
The proof is similar as in \cite{KiOhPoVi17}[Lemma 5.4]. Remark that $H^1 \hookrightarrow L^{p_0+2}$ since $p_0\leq \frac{4}{d-2}$.
\end{proof}

We divide the proof of Theorem \ref{thm90} into three steps.

\subsubsection{Finiteness}
We argue as in \cite[Proof of Theorem 5.2]{KiOhPoVi17}. Assume $u\in H^1(\R^d)\cap L^{p_0+2}(\R^d)$ such that $(M(u),E(u))\in\mathcal{R}$. If $\Phi(u)=0$ then by the definition of the function $I^{\Phi}(\lambda)$, $I^{\Phi}(M(u))<E(u)$. If $\Phi(u)< 0$ then using Lemma \ref{lm90}, there exists $u^{\lambda}$ such that $\Phi(u^{\lambda})=0$ and $E(u^{\lambda})<E(u)$. Thus, $I^{\Phi}(M(u))=I^{\Phi}(M(u^{\lambda}))\leq E(u^{\lambda})<E(u)$. In all cases, this gives a contradiction with the definition of $\mathcal{R}$. Thus, $\Phi(u)>0$.

If $M(u)=m<\tilde{m}_c$ then by Theorem \ref{thm1}, $\Phi(u)>0$. Thus $I^{\Phi}(m)=\infty$.

If $\tilde{m}_c\leq m<m_c$. By Claim \ref{Cl:nabla} and Theorem \ref{thm1}, $I^{\Phi}(m)>0$. Let $u \in H^1(\R^d)\cap L^{p_0+2}(\R^d)$ be such that $M(u)=m$. By Corollary \ref{cor2}, there exists $u_0\in H^1(\R^d)\cap L^{p_0+2}(\R^d)$ such that $M(u_0)=m$ and $\Phi(u_0)=\tilde{I}(m)\leq 0$. We divide into two cases. If $m=\tilde{m}_c$ then $\Phi(u_0)=\tilde{I}(m)=0$. This implies that $0<I^{\phi}(m)\leq E(u_0)<\infty$. If $\tilde{m}_c<m<m_c$ then $\Phi(u_0)=\tilde{I}(m)<0$. By Lemma \ref{lm90}, there exists $\lambda_0>1$ such that $\Phi(u_0^{\lambda_0})=0$, $E(u_0^{\lambda_0})<E(u_0)$, $M(u_0^{\lambda_0})=m$. Combining all the above, we have
\[
0<I^{\Phi}(m)\leq E(u_0^{\lambda_0})<E(u_0)<\infty.
\]

If $m\geq m_c$ then by Theorem \ref{thm1}, there exists $u_0\in H^1(\R^d)\cap L^{p_0+2}(\R^d)$ such that $M(u_0)=m$, $E(u_0)=I(m)\leq 0$. Since $u_0$ is a minimizer of problem $I(m)$, from Theorem \ref{thm1}, we have $\Phi(u_0)=0$.
Thus, $I(m)=I^{\Phi}(m)$ and $I^{\Phi}(m)$ is achieved. Moreover, using Theorem \ref{thm1}, $I^{\Phi}(m)$ is negative, strictly decreasing on $(m_c,\infty)$.

\subsubsection{Monotonicity}
\label{ssub:monotonicity}

We next prove that $m\mapsto I^{\phi}(m)$ is nonincreasing on $[\tilde{m}_c,m_c]$. Let $m\in [\tilde{m}_c,m_c]$ and $m'>m$ such that $m'\leq m_c$. We will prove that for all $\eps>0$, there exists $U_{\eps}\in H^1\cap L^{p_0+2}$ such that $M(U_{\eps})=m'$, $\Phi(U_{\eps})\leq 0$ and $E(U_{\eps})\leq I^{\phi}(m)+\eps$. Using Lemma \ref{lm90}, we see that it would impliy $I^{\phi}(m')\leq I^{\phi}(m)+\eps$, and thus, letting $\eps$ goes to $0$, $I^{\phi}(m')\leq I^{\phi}(m)$, which yields that $I^{\phi}$ is nonincreasing.

To prove the existence of $U_{\eps}$ satisfying the desired property, we first consider a function $v\in H^1\cap L^{p_0+2}$, given by the definition of $I^{\phi}$, such that
\begin{equation}
 \label{Pv9}
 M(v)=m,\ \Phi(v)=0\text{ and } E(v)\leq I^{\phi}(m)+\frac{\eps}{8}.
\end{equation}
We let $v_b(x)=v(bx)$, for some $b<1$. Then
\begin{equation}
 \label{Pv10}
 \int |v_b|^2=b^{-d}\int |v|^2=m+o(1)\text{ as }b\to 1.
 \end{equation}
 Furthermore
$\Phi(v_b)= b^{-d}\left(b^2\int_{\R^d}|\nabla v|^2\,dx-\frac{dp_1}{2(p_1+2)}\int_{\R^d}|v|^{p_1+2}\,dx+\frac{dp_0}{2(p_0+2)}\int_{\R^d}|v|^{p_0+2}\,dx\right)$
which implies, using that $\Phi(v)=0$,
 \begin{equation}
 \label{Pv11}
 \Phi(v_b)=b^{-d}(b^2-1)\int |\nabla v|^2<0,\text{ when }b<1.
\end{equation}
We also have
\begin{equation}
\label{Pv12}
E(v_b)=E(v)+o(1)\text{ as }b\to 1.
\end{equation}
By \eqref{Pv9}, \eqref{Pv10} and \eqref{Pv12} one can choose $b<1$ such that
\begin{equation}
 \label{Pv13}
 M(v_b)\leq m',\quad \Phi(v_b)<0,\quad E(v_b)\leq I^{\phi}(m)+\frac{\eps}{4}.
\end{equation}
If $M(v_b)=m'$, we can take $U_{\eps}=v_b$ and we are done.

If $M(v_b)<m'$, we fix $\psi\in C_0^{\infty}(\R^d\setminus \{0\})$ and $M(\psi)=m'-M(v_b)$, and we let
$$\tilde{u}_{\lambda}(x)=v_b(x)+\lambda^{d/2}\psi(\lambda x),\quad u_{\lambda}=\frac{\sqrt{m'}}{\|\tilde{u}_{\lambda}\|_2}\tilde{u}_{\lambda}.$$
Then
\begin{gather}
 \label{Pv20}
 \lim_{\lambda\to 0} \|\tilde{u}_{\lambda}\|_{2}^2=M(v_b)+M(\psi)=m'\\
 \label{Pv21}
 \|u_{\lambda}\|^2_{2}=m'.
\end{gather}
Furthermore, since $\frac{4}{d}<p_1<p_0$,
$$\sum_{j=0,1}\lim_{\lambda \to 0}\|\tilde{u}_{\lambda}-v_b\|_{L^{p_j+2}}+\|\nabla(\tilde{u}_{\lambda}-v_b)\|_{L^2}=0,$$
and thus, by \eqref{Pv20} and the definition of $u_{\lambda}$,
$$\sum_{j=0,1}\lim_{\lambda \to 0}\|u_{\lambda}-v_b\|_{L^{p_j+2}}+\|\nabla(u_{\lambda}-v_b)\|_{L^2}=0,$$
which implies, using also \eqref{Pv13},
\begin{equation}
 \label{Pv22}
 \lim_{\lambda\to 0}\Phi(u_{\lambda})=\Phi(v_b)<0,\quad \lim_{\lambda\to 0}E(u_{\lambda})=E(v_b)\leq I^{\Phi}(m)+\frac{\eps}{2}.
\end{equation}
By \eqref{Pv22}, we can choose $\lambda$ large such that
$$\Phi(u_{\lambda})\leq 0,\quad E(u_{\lambda})\leq I^{\phi}(m)+\eps.$$
Combining with \eqref{Pv21} we see that $U_{\eps}=u_{\lambda}$ satisfies the desired properties.

\subsubsection{Strict monotonicity}


Now, we prove the last statement in Theorem \ref{thm90}. From Theorem \ref{thm1} and $I^{\Phi}(m)=I(m)$ for $m\geq m_c$, we see that $I^{\Phi}$ is strictly decreasing and achieved on $[m_c,\infty)$. Now, consider $I^{\Phi}$ on $[\tilde{m}_c,m_c]$. Assume that $d=1,2,3,4$ or $d\geq 5$ and $p_0\leq\frac{4}{d-2}$.



We start by proving that $I^{\phi}(m)$ is achieved. Note that it is already known in the case $m=m_c$. We thus assume $ \tilde{m}\leq m<m_c$. We consider a minimizing sequence $u_n$ for $I^{\phi}(m)$:
$$ \Phi(u_n)=0, \quad E(u_n)\leq I^{\phi}(m)+\frac{1}{2^{n}}, \quad \int |u_n|^2=m.$$
Let $v_n$ be the symmetric decreasing rearrangement of $u_n$. Then
\begin{equation}
 \label{bound_vn}
\Phi(v_n)\leq 0,\quad E(v_n)\leq E(u_n)\leq I^{\phi}(m)+\frac{1}{2^n},\quad \int |v_n|^2=m.
 \end{equation}
Let
$$a=\max_{s\geq 0} \left( \frac{p_1 d}{2(p_1+2)}s^{p_1}-\frac{p_0 d}{2(p_0+2)}s^{p_0}\right)\in (0,\infty).$$
Then
$$-\frac{p_1d}{2(p_1+2)}\int |v_n|^{p_1+2}+\frac{p_0d}{2(p_0+2)}\int |v_n|^{p_0+2}+aM(v_n)\geq 0,$$
and $\Phi(v_n)\leq 0$, $\int |v_n|^2=m$ imply $ \int |\nabla v_n|^2\leq am.$
Thus $v_n$ is bounded in $\dot{H}^1$. Since $\Phi(v_n)\leq 0$, using Gagliardo-Nirenberg inequality and the boundedness in $L^2$, we deduce that $v_n$ is bounded in $H^1\cap L^{p_0+2}$.  Let $v\in H^1\cap L^{p_0+2}$ such that (after extraction)
$$v_n \xrightharpoonup[]{n\to\infty}v \text{ weakly in }H^1\cap L^{p_0+2}.$$
Since $v_n$ is radial and a nonincreasing function of $|x|$, using Strauss Lemma, we obtain that the convergence is strong in $L^q$, $2<q<\frac{2d}{d-2}$ (for all $2<q<\infty$ if $d=1,2$). Also, since $(v_n)_n$ is bounded in $L^{p_0+2}$, the convergence is strong in $L^q$ for all $2<q<p_0+2$. In particular, it is strong in $L^{p_1+2}$. We thus obtain:
\begin{gather*}
\Phi(v)\leq \liminf_{n}\Phi(v_n)\leq 0\\
E(v)\leq \liminf_{n}E(v_n)\leq I^{\phi}(m)\\
 M(v)\leq \liminf_{n}M(v_n)\leq m.
\end{gather*}
By Lemma \ref{lm90} and the fact that $\Phi(v)$ is nonpositive, we obtain $E(v)\geq I^{\phi}(M(v))$.
Since by \S \ref{ssub:monotonicity}, $I^{\phi}$ is nonincreasing, we obtain
$$ I^{\phi}(m)\leq I^{\phi}(M(v))\leq E(v)\leq I^{\phi}(m),$$
which proves that all these quantities must be equal. Thus $\lim_{n}E(v_n)=E(v),$
which shows that $v_n$ converges strongly to $v$ in $\dot{H^1}$ and in $L^{p_0+2}$. Thus we have found $v\in L^{p_0+2}\cap H^1$ such that $\Phi(v)\leq 0$ and
\begin{equation}
\label{good_v1}
E(v)=I^{\phi}(m)=I^{\phi}(M(v)),\quad M(v)\leq m.
\end{equation}
Note that since $m<m_c$, we have $E(v)>0$ and thus $v$ is not identically $0$.
By Lemma \ref{lm90}, we see that we cannot have $\Phi(v)<0$. Thus
\begin{equation}
 \label{good_v2}
 \Phi(v)=0
\end{equation}
If $M(v)=m$ we are done. To conclude the proof that $I^{\phi}(m)$ is attained, we will assume $M(v)<m$ and obtain a contradiction.

First assume $d\in\{1,2,3,4\}$. Denoting $m'=M(v)$ , we see by the above that $I^{\phi}(m)=I^{\phi}(m')$ and since $I^{\phi}$ is nonincreasing, we deduce
$$ I^{\phi}(\mu)=I^{\phi}(m),\quad m'\leq \mu\leq m.$$
By the definition of $I^{\phi}$ we obtain
\begin{equation}
\label{minime_v}
E(v)=\min_{\substack{0<M(u)\leq m\\ \Phi(u)=0}}E(u).
\end{equation}
Since $M(v)=m'<m$, we see that $v$ is a minimum of $E(u)$ with the sole constraint $\Phi(u)=0$. Thus there exists a Lagrange multiplier $\alpha$ such that
\begin{equation}
\label{equation_v0}
-\Delta v-v^{p_1+1}+v^{p_0+1}=\alpha\left(-\Delta v-\frac{dp_1}{4}v^{p_1+1}+\frac{dp_0}{4}v^{p_0+1}\right).
\end{equation}
If $\alpha=1$, we obtain
$$\left( \frac{dp_1}{4}-1 \right)v^{p_1+1}=\left( \frac{dp_0}{4}-1 \right)v^{p_0+1},$$
which shows, since $v\in H^1$, that $v=0$, a contradiction.

If $\alpha\neq 1$, we see that $v$ is a $H^1$ solution of an equation of the form
$$-\Delta v+a_1v^{p_1+1}+a_0v^{p_0+1}=0,$$
where $(a_0,a_1)\in \R^2$. Since we have assumed $d\in \{1,2,3,4\}$, this proves again by Proposition \ref{P:uniqueness} that $v=0$, a contradiction which concludes the proof if $d\in \{1,2,3,4\}$.\\

We next assume that $d\geq 5$ and $p_0\leq \frac{4}{d-2}$.
By Lemma \ref{Lm:exist desired profile}, there exists $\tilde{v}\neq 0$ such that $M(\tilde{v})=m$, $\Phi(\tilde{v})=0$ and $E(\tilde{v})<E(v)=I^{\Phi}(m)$, this gives a contradiction to the definition of $I^{\Phi}(m)$. Thus, the achievement of $I^{\Phi}(m)$ is proved.\\

Now, we prove $I^{\Phi}(m)$ is strictly decreasing on $[\tilde{m}_c,m_c]$. Let $\tilde{m}_c\leq m_1<m_2\leq m_c$. Since $I^{\Phi}(m_1)$ is achieved, let $u\in H^1(\R^d)\cap L^{p_0+2}(\R^d)$ be such that $M(u)=m_1$, $\Phi(u)=0$, $E(u)=I^{\Phi}(m_1)$. Assume that $p_0\leq \frac{4}{d-2}$. By Lemma \ref{Lm:exist desired profile}, there exists $\tilde{u}$ such that $M(\tilde{u})=m_2$, $\Phi(\tilde{u})=0$, $E(\tilde{u})<E(u)=I^{\Phi}(m_1)$. Thus, by the definition of $I^{\Phi}(m_2)$, we have $I^{\Phi}(m_1)>I^{\Phi}(m_2)$, which proves that $I^{\Phi}$ is strictly decreasing on $[\tilde{m}_c,m_c]$ in the case $p_0\leq \frac{4}{d-2}$. We next assume that $d\in\{1,2,3,4\}$ and $I^{\phi}(m_1)=I^{\phi}(m_2)$. Since $I^{\phi}(m)$ is nonincreasing on $[\tilde{m}_c,\infty)$, we have 
\[
E(u)=\min_{\substack{0<M(v)\leq m_2\\ \Phi(v)=0}}E(v).
\]
Similarly as above, using Proposition \ref{P:uniqueness}, we have $u=0$, a contradiction which implies the desired result.\\

Next, we prove that $I^{\Phi}$ is lower semicontinuous on $[\tilde{m}_c,\infty)$. Since, $I^{\Phi}$ is nonincreasing, it suffices to show that $I^{\Phi}$ is right continuous. We argue by contradiction, assume that there exists $m_n$ decreasing to $m$ with $I^{\Phi}(m)>\lim_{n\rightarrow\infty}I^{\Phi}(m_n)$. Let $u_n$ be radial functions such that $M(u_n)=m_n$, $\Phi(u_n)=0$, $E(u_n)=I^{\Phi}(m_n)$. We see that $u_n$ is bounded in $H^1(\R^d)$. Thus, $u_n$ weakly converges to $v$ in $H^1(\R^d)$ and strongly converges to $v$ in $L^{p_1+2}(\R^d)$. This implies that $M(v)\leq m$, $\Phi(v)\leq 0$, $E(v)\leq\lim_{n\rightarrow\infty}E(u_n)<I^{\Phi}(m)$. If $\Phi(v)=0$ then $I^{\Phi}(m)\leq I^{\Phi}(M(v))\leq E(v)<I^{\Phi}(m)$, which gives a contradiction. If $\Phi(v)<0$ then by Lemma \ref{lm90}, there exists $\tilde{v}$ such that $M(\tilde{v})=M(v)\leq m$, $\Phi(\tilde{v})=0$, $E(\tilde{v})<E(v)<I^{\Phi}(m)$. Similar as the above, we gives a contradiction. This completes the proof.


\begin{proof}[Proof of Theorem \ref{thm3}]
For $m\in (0,m_c)$, let $\mathsf{e}(m)=I^{\Phi}(m)$. Theorem \ref{thm90} implies that $\mathsf{e}(m)$ is nonincreasing on $(0,m_c)$. For each $u \in H^1(\R^d)\cap L^{p_0+2}(\R^d)$ such that $M(u)=m\in (0,m_c)$ and $0<E(u)<\mathsf{e}(m)$, let $\xi=\frac{P(u)}{M(u)}$ and $u_{\xi}(x)=e^{ix\xi}u(x)$. We have $M(u_{\xi})=M(u) \in (0,m_c)$ and $0<E(u_{\xi})=E(u)-\frac{|P(u)|^2}{M(u)}\leq E(u)<\mathsf{e}(m)\leq I^{\Phi}(m)$. Using Theorem \ref{thm90}, $\Phi(u_{\xi})> 0$. This implies that
\[
\Phi(u)-\frac{|P(u)|^2}{M(u)}> 0.
\]
This proves the desired result.
\end{proof}


\section{Scattering}
 \label{S:scattering}

In this subsection, we conclude the proof of Theorem \ref{T:scatt_intro2}. 

\subsection{Proof of Theorem \ref{T:scatt_intro2} in the energy-supercritical case}
We prove here Theorem \ref{T:scatt_intro2} in the case where $s_0>1$.

We consider the set
$$\RRR:= \left\{\varphi\in H^{s_0},\; 0< \int|\varphi|^2<m_c,\; E(\varphi)<\mathsf{e}\Big(\int |\varphi|^2\Big)\right\},$$ 
where $m_c$, $\mathsf{e}$ are defined in Section \ref{S:var}.


We will prove Theorem \ref{T:scatt_intro2} as a consequence of 
\begin{theorem}
 \label{T:scatt2}
 Suppose that Assumption \ref{Assum:NL} holds and that $s_0>1$.  Let $A_0$ such that Property \ref{Proper:bnd} holds.
 For $\eta>0$, we denote 
 $$\RRR_{\eta}=\left\{\varphi\in H^{s_0}, 0< \int |\varphi|^2\leq m_c-\eta,\; E(\varphi)\leq R(\varphi)-\eta\right\},$$
 where $R(\varphi)=\mathsf{e}\Big(\int |\varphi|^2\Big)$. Then for all $A\in (0,A_0)$ there exists $\FFF(A,\eta)>0$ such that for any interval $I$, for any solution $u\in C^0(I,H^{s_0})$ of \eqref{NLS} such that
 \begin{equation}
  \label{sct20}
 \exists t\in I,\; u(t)\in \RRR_{\eta} \quad
 \text{and}\quad\sup_{t\in I} \|u(t)\|_{\dot{H}^{s_0}}^2+\eta\|u(t)\|^2_2\leq A^2,
 \end{equation} 
 one has $u\in S^{s_0}(I)$ and $\|u\|_{S^{s_0}(I)}\leq \FFF(A,\eta)$.
\end{theorem}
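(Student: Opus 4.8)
The plan is to run the concentration--compactness/rigidity scheme in its quantitative, threshold form, for which all the tools of Section~\ref{S:preliminary} have been prepared. For fixed $\eta>0$ I would define
\[
\FFF(A,\eta)=\sup\big\{\|u\|_{S^{s_0}(I)}\big\},
\]
the supremum being over all intervals $I$ and all solutions $u$ of \eqref{NLS} satisfying \eqref{sct20} for the given $A$. The small-data theory (the Remark following Theorem~\ref{T:local wellposed}) shows $\FFF(A,\eta)<\infty$ for small $A$, and $A\mapsto\FFF(A,\eta)$ is nondecreasing, so I set $A_c=\sup\{A>0:\FFF(A,\eta)<\infty\}>0$; it suffices to prove $A_c\geq A_0$. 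Arguing by contradiction, I would suppose $A_c<A_0$ and extract a sequence of solutions $u_n$ on intervals $I_n$, with (after a time translation) $u_n(0)\in\RRR_\eta$, with the combined quantity $\sup_{t\in I_n}\big(\|u_n(t)\|_{\dot H^{s_0}}^2+\eta\|u_n(t)\|_2^2\big)\to A_c^2$ and $\|u_n\|_{S^{s_0}(I_n)}\to\infty$. Since the weight $\eta>0$ controls $\|u_n(0)\|_{L^2}$, the data $u_n(0)$ are bounded in $H^{s_0}$, hence (as $s_0>1$) in $H^1$, and I would apply the profile decomposition of Proposition~\ref{P:decomposition1}, classifying the profiles as $\JJJ_0\cup\JJJ_{NC}\cup\JJJ_C$.

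The next step is to track the conserved and critical quantities across the profiles. The $\dot H^{s_0}$ expansion \eqref{Pythagorean1} distributes the critical norm $A_c^2$ among all profiles, while \eqref{PythagoreanHs} (with $s=0$ and $s=1$) and the $L^q$ expansion of Remark~\ref{R:Lq} show that the mass, the $\dot H^1$ norm, and---because $p_1+2<p_0+2<q_0$ precisely when $s_0>1$---the $L^{p_j+2}$ norms are carried only by the non-concentrating profiles with $\tau^j=0$. Consequently a concentrating profile carries a share of the critical norm but asymptotically no mass and no potential energy, whereas the mass and energy of $u_n(0)$ split among the non-concentrating profiles. For $j\in\JJJ_C$ the nonlinear profile solves the homogeneous equation \eqref{NLSh}; transferring the uniform bound $\sup_t\|u_n(t)\|_{\dot H^{s_0}}^2\leq A_c^2+o(1)$ to this profile through the time-dependent Pythagorean expansion of Lemma~\ref{L:Pythagorean} keeps its critical norm below $A_c<A_0$, so Proposition~\ref{P:boundh} provides the bound \eqref{scatt_profile1}. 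For a non-concentrating profile whose critical norm is \emph{strictly} below $A_c$, the definition of $A_c$ yields a finite $S^{s_0}$ bound, i.e.\ \eqref{scatt_profile2}.

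This produces the decisive dichotomy. If two or more profiles are nonzero, or a single profile leaves a nontrivial remainder in the critical norm, then every nonlinear profile has critical norm strictly below $A_c$ and hence a bounded scattering norm on $I_n$; the approximation Theorem~\ref{T:NLapprox} would then bound $\|u_n\|_{S^{s_0}(I_n)}$ uniformly, contradicting $\|u_n\|_{S^{s_0}(I_n)}\to\infty$. Hence exactly one profile survives and carries the full critical norm. It cannot be concentrating, since a concentrating profile at critical norm $A_c<A_0$ would still scatter with bounded norm by Proposition~\ref{P:boundh}, nor can it have $\tau^j=\pm\infty$, since such a profile is asymptotically linear and scatters forward; so it is non-concentrating with $\tau^j=0$, and calling it $u_c$ it is a nonzero, non-scattering solution. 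Using that $\mathsf{e}$ is nonincreasing and that the linear remainder carries nonnegative energy in the limit, one checks $0<M(u_c)<m_c$ and $E(u_c)<\mathsf{e}(M(u_c))$, i.e.\ $u_c(0)\in\RRR$, with $\sup_t\|u_c(t)\|_{\dot H^{s_0}}^2\leq A_c^2<A_0^2$. By Theorem~\ref{T:GWP} it is global forward in time, and the standard minimality argument---every profile decomposition of $u_c(t_n',\cdot+x(t_n'))$ must again reduce to a single non-concentrating profile---shows that the renormalized orbit $K=\{u_c(t,\cdot+x(t)):t\geq0\}$ is precompact in $H^{s_0}$, which for $s_0\geq1$ is the same as precompactness in $H^{s_0}\cap H^1$.

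Finally I would invoke the rigidity Proposition~\ref{pro1}: precompactness of $K$ forces $\min_{t\geq0}\big|\Phi(u_c(t))-|P(u_c)|^2/M(u_c)\big|=0$. On the other hand $u_c(t)\in\RRR$ for every $t$, so by Theorem~\ref{T:positivity} in the Galilean-optimized form $\Phi(u)-|P(u)|^2/M(u)>0$ on $\RRR$ recorded after Proposition~\ref{pro1}, this quantity is strictly positive; since $K$ is compact and $\Phi$ is continuous on $H^{s_0}\cap H^1$, it is bounded below by a positive constant, contradicting the vanishing of the minimum. Thus $u_c\equiv0$, contradicting that it carries the critical norm $A_c>0$, and therefore $A_c\geq A_0$, which proves the theorem. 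The main obstacle in making this rigorous is the profile analysis of the second and third paragraphs: propagating the sharp time-uniform $\dot H^{s_0}$ bound from $u_n$ to each nonlinear profile so as to legitimately apply Proposition~\ref{P:boundh} and to verify \eqref{scatt_profile1}--\eqref{scatt_profile2} on the entire intervals $I_n$ requires combining Lemma~\ref{L:Pythagorean} with a continuity/bootstrap argument on the maximal intervals of existence, and one must also confirm that the surviving profile is non-concentrating with positive mass and the correct energy bound so that it genuinely lies in $\RRR$.
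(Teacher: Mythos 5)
Your proposal is correct and follows essentially the same route as the paper: the quantitative induction-on-threshold at a critical level $A_c$, the profile decomposition of Proposition \ref{P:decomposition1} with the Pythagorean expansions and Lemma \ref{L:Pythagorean} used to propagate the $\dot H^{s_0}$, mass and energy constraints to each nonlinear profile, the reduction to a single non-concentrating profile (concentrating profiles being excluded via Proposition \ref{P:boundh}), the resulting compact critical orbit, and finally the rigidity Proposition \ref{pro1} combined with the positivity of $\Phi-|P|^2/M$ on $\RRR$ from Theorem \ref{thm3}. The bootstrap you flag as the main technical obstacle is exactly Step 2 of the paper's proof of its compactness claim, carried out with the continuity argument on $[0,b_n']$ that you describe.
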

Theorem \ref{T:scatt2} implies Theorem \ref{T:scatt_intro2} by the scattering criterion of Theorem \ref{T:local wellposed}, the conservation of the $L^2$ norm and the fact that $\RRR=\cup_{\eta>0} \RRR_{\eta}$.
\begin{proof}[Proof of Theorem \ref{T:scatt2}]
 We argue by contradiction, following the compactness/rigidity scheme as in \cite{KeMe06}. We fix $\eta>0$ throughout the argument. In all the proof, we will endow $H^{s_0}$ with the norm defined by
 \begin{equation}
  \label{Hs0eta}
  \|u\|^2_{H^{s_0}}=\|u\|^2_{\dot{H}^{s_0}}+\eta\|u\|^2_2.
 \end{equation}
 We will denote by $\PPP(A)$ the property that there exists $\FFF(A)$ such that for any interval $I$, for any solution $u\in C^0(I,H^{s_0})$ such that \eqref{sct20} holds, one has $u\in S^{s_0}$ and $\|u\|_{S^{s_0}(I)}\leq \FFF(A)$.
 
 By the small data theory for \eqref{NLS}, if $A>0$ is small enough and $\|u(t)\|_{H^{s_0}}\leq A$ for some $t\in I_{\max}(u)$, then $u$ is globally defined, scatters and $\|u\|_{S^{s_0}(\R)}\lesssim A$. This implies that $\PPP(A)$ holds for small $A>0$.

 Thus if the conclusion of Theorem \ref{T:scatt2} does not hold, there exists $A_c\in (0,A_0)$ such that for all $A<A_c$, $\PPP(A)$ holds, and $\PPP(A_c)$ does not hold, i.e. there exists a sequence of intervals $((a_n,b_n))_n$, a sequence $(u_n)_n$ of solutions of \eqref{NLS} on $(a_n,b_n)$,
 \begin{equation}
  \label{sct32}
  u_n\in C^0((a_n,b_n),H^{s_0}),\quad \exists t\in I_n,\; u_n(t)\in \RRR_{\eta},\quad 
  \lim_{n\to\infty}\sup_{a_n<t<b_n} \|u_n(t)\|_{H^{s_0}} =A_c.
 \end{equation} 
 and $\lim_{n\to\infty} \|u_n\|_{S^{s_0}((a_n,b_n))}=\infty$. Time translating $u_n$, we can assume
 \begin{equation}
  \label{sct33}
a_n<0<b_n,\quad 
  \lim_{n\to\infty}\|u_n\|_{S^{s_0}((a_n,0))}=\lim_{n\to\infty}\|u_n\|_{S^{s_0}((0,b_n))}=+\infty.
 \end{equation} 
 We will prove
 \begin{claim}
 \label{Cl:compactness}
  For any sequences $(a_n)_n$, $(b_n)_n$ with $a_n<0<b_n$, for any sequence $(u_n)_n$ of solutions of \eqref{NLS} satisfying \eqref{sct32}, \eqref{sct33}, there exist, after extraction of subsequences, a sequence $(x_n)_n\in (\R^d)^{\mathbb{N}}$ and $\varphi\in H^{s_0}$ such that
  \begin{equation}
  \label{convergence}
  \lim_{n\to\infty}\|u_n(0,\cdot-x_n)-\varphi\|_{H^{s_0}}=0.
  \end{equation} 
 \end{claim}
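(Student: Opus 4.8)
The plan is to carry out the concentration-compactness step of the Kenig--Merle scheme: apply the profile decomposition to $(u_n(0))_n$, transfer the mass and energy constraints defining $\RRR_\eta$ to the nonlinear profiles, use the minimality of $A_c$ to show that a single non-concentrating profile survives with vanishing remainder, and read off \eqref{convergence}. First, since $M$ and $E$ are conserved by \eqref{NLS} and $\RRR_\eta$ is cut out by conditions on these conserved quantities, the hypothesis $u_n(t)\in\RRR_\eta$ for one $t\in I_n$ upgrades to $u_n(t)\in\RRR_\eta$ for every $t\in I_n$; in particular $u_n(0)\in\RRR_\eta$, and by \eqref{sct32} the sequence $(u_n(0))_n$ is bounded in $H^{s_0}$, hence in $H^1$ by interpolation between $L^2$ and $\dot H^{s_0}$ (here $s_0>1$). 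Applying Proposition \ref{P:decomposition1} and the normalisations of Remark \ref{R:Lq}, I extract a profile decomposition $u_n(0)=\sum_{j=1}^J\varphi^j_{Ln}(0)+w^J_{Ln}(0)$ with $\tau^j\in\{0,\pm\infty\}$ and each profile either concentrating ($\lambda^j_n\to 0$) or non-concentrating ($\lambda^j_n=1$). Because $s_0>1$, the mass and energy of the concentrating profiles vanish as $\lambda^j_n\to0$, while for the remaining profiles the $L^2$- and $\dot H^1$-norms decompose by \eqref{PythagoreanHs} (at $s=0$ and $s=1$) and the potential terms $\|\cdot\|_{L^{p_j+2}}^{p_j+2}$ decompose by the $L^q$-orthogonality of Remark \ref{R:Lq}, so $M$ and $E$ are asymptotically additive over profiles. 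Each profile has mass $<m_c$ (from the $L^2$-expansion and $M(u_n(0))\leq m_c-\eta$), so by the last statement of Theorem \ref{thm1} its energy, and that of the remainder, is nonnegative. Hence $E(\varphi^j)\leq\lim_n E(u_n(0))\leq\mathsf e(M(u_n(0)))-\eta$, and since $\mathsf e$ is nonincreasing and $M(\varphi^j)\leq M(u_n(0))$, I get $E(\varphi^j)\leq\mathsf e(M(\varphi^j))-\eta$: each nonlinear profile lies in $\RRR_\eta$.

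Next I would obtain the single-profile structure by contradiction, assuming the decomposition is nontrivial (at least two nonzero profiles, or one profile with a remainder that does not vanish in $H^{s_0}$). Working with the combined norm \eqref{Hs0eta}, the Pythagorean expansions \eqref{Pythagorean1} and \eqref{PythagoreanHs} give $\sum_j\|\varphi^j(0)\|_{H^{s_0}}^2+\lim_J\limsup_n\|w^J_{Ln}(0)\|_{H^{s_0}}^2\leq A_c^2$, so in the nontrivial case each individual profile carries $H^{s_0}$-size strictly below $A_c^2$. Granting the uniform bound $\sup_t\|\varphi^j(t)\|_{H^{s_0}}<A_c$ over the lifespan of each profile (see below), every non-concentrating profile scatters by $\PPP(A)$ for some $A<A_c$, while every concentrating profile, being a rescaled solution of \eqref{NLSh} with $\dot H^{s_0}$-norm $<A_c<A_0$, scatters by Proposition \ref{P:boundh}. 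The scattering hypotheses \eqref{scatt_profile1}--\eqref{scatt_profile2} then hold on $I_n=(a_n,b_n)$, and since the remainder is negligible in the dispersive norms, Theorem \ref{T:NLapprox} yields $\limsup_n\|u_n\|_{S^{s_0}(I_n)}<\infty$, contradicting \eqref{sct33}. Therefore exactly one profile $\varphi^1$ is nonzero and $\|w^1_{Ln}(0)\|_{H^{s_0}}\to0$.

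It remains to identify $\varphi^1$. If $\lambda^1_n\to0$, then $\varphi^1$ is a rescaled solution of \eqref{NLSh} with $\dot H^{s_0}$-norm $\leq A_c<A_0$, hence scatters by Proposition \ref{P:boundh}, and Theorem \ref{T:NLapprox} again bounds $\|u_n\|_{S^{s_0}(I_n)}$, contradicting \eqref{sct33}; so $\lambda^1_n=1$. If $\tau^1=+\infty$, then $\varphi^1_{Ln}(0)$ is the free evolution at a time tending to $+\infty$, so $\|e^{i\cdot\Delta}\varphi^1_{Ln}(0)\|_{X((0,\infty))}\to0$; by the small-data remark after Theorem \ref{T:local wellposed} the nonlinear profile has small $S^{s_0}((0,\infty))$, and Theorem \ref{T:NLapprox} bounds $\|u_n\|_{S^{s_0}((0,b_n))}$, contradicting \eqref{sct33}; symmetrically $\tau^1=-\infty$ contradicts the blow-up on $(a_n,0)$. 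Hence $\tau^1=0$, i.e. $t^1_n\to0$, and $\varphi^1_{Ln}(0,x)=\varphi^1_L(-t^1_n,x-x^1_n)$. Using $\|w^1_{Ln}(0)\|_{H^{s_0}}\to0$ and the continuity of $t\mapsto e^{it\Delta}\varphi^1_L(0)$ in $H^{s_0}$, I conclude $u_n(0,\cdot+x^1_n)\to\varphi^1_L(0)$ in $H^{s_0}$; setting $x_n=-x^1_n$ and $\varphi=\varphi^1_L(0)$ gives \eqref{convergence}.

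The main obstacle is the uniform-in-time bound $\sup_t\|\varphi^j(t)\|_{H^{s_0}}<A_c$ feeding $\PPP(A)$ and Proposition \ref{P:boundh}: the decomposition controls the profiles only at $t=0$, whereas these results require control over the whole lifespan, and the Pythagorean expansion of Lemma \ref{L:Pythagorean} presupposes the very $S^{s_0}$-bounds on the profiles that one is trying to prove. I would break this circularity by a continuity/bootstrap argument on $I_n$: on any subinterval containing $0$ where all profiles remain strictly below $A_c$ in $H^{s_0}$, the hypotheses of Theorem \ref{T:NLapprox} and Lemma \ref{L:Pythagorean} are satisfied, so $\sup_t\|\varphi^j(t)\|_{H^{s_0}}^2\leq\sup_t\|u_n(t)\|_{H^{s_0}}^2-\sigma+o(1)$ with $\sigma>0$ the size of another profile (or of the remainder); since $\sup_t\|u_n(t)\|_{H^{s_0}}\to A_c$, this keeps the profiles strictly below $A_c$ and lets the subinterval be extended to all of $I_n$.
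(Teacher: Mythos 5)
Your proposal is correct and follows essentially the same route as the paper: a linear/nonlinear profile decomposition at $t=0$, transfer of the $\RRR_\eta$ constraints to the non-concentrating profiles via the Pythagorean expansions and Remark \ref{R:Lq}, exclusion of a nontrivial decomposition by combining $\PPP(A)$ for $A<A_c$, Proposition \ref{P:boundh} for concentrating profiles, and Theorem \ref{T:NLapprox} against \eqref{sct33}, and finally elimination of the cases $j\in\JJJ_C$ and $\tau^1=\pm\infty$ for the surviving profile. In particular, the continuity/bootstrap argument you describe to break the circularity in the uniform-in-time $H^{s_0}$ bound is exactly the paper's Step 2, where $b_n'$ is taken as the first time the profiles reach the threshold $A_c^2-\tfrac12\eps_0^2$ and Lemma \ref{L:Pythagorean} then forces each profile strictly below $A_c^2-\eps_0^2$.
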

We first assume the claim and conclude the proof of Theorem \ref{T:scatt2}. By the claim, there exist (after extraction of subsequences) $\varphi\in H^{s_0}$ and $(x_n)_n$ such that \eqref{convergence} holds. Let $u$ be the solution of \eqref{NLS} such that $u(0)=\varphi$. Since $\RRR_{\eta}$ is closed in $H^{s_0}$, we have $\varphi\in \RRR_{\eta}$.

We next prove by contradiction
\begin{equation}
\label{lim_an_bn}
\lim_{n\to\infty}a_n=-\infty,\quad \lim_{n\to\infty} b_n=+\infty. 
\end{equation} 
Assume to fix ideas, and after extraction of subsequences $\lim_{n\to\infty} b_n =b\in [0,\infty)$. Using that $\lim_n\|u_n\|_{S^{s_0}([0,b_n))}=\infty$, we must have $T_{+}(u)<\infty$ and $b\geq T_{+}(u)$. By the last assertion of \eqref{sct32}, we obtain
$$\sup_{0\leq t<T_{+}(u)} \|u(t)\|_{H^{s_0}}\leq A_c.$$
This implies by Theorem \ref{T:GWP} that $T_{+}(u)=+\infty$, a contradiction. Hence \eqref{lim_an_bn}. Next, we see that \eqref{convergence}, perturbation theory for equation \eqref{NLS} and the last assertion in \eqref{sct32} implies that for any compact interval $I\subset I_{\max}(u)$,
$$
\sup_{t\in I}\|u(t)\|_{H^{s_0}}\leq A_c.$$
This implies by Theorem \ref{T:GWP} that $u$ is global and 
$$\sup_{t\in \R}\|u(t)\|_{H^{s_0}}\leq A_c.$$
By \eqref{sct33} and stability theory for equation \eqref{NLS}, one has
$$ \|u\|_{S^{s_0}((-\infty,0))}=\|u\|_{S^{s_0}((0,+\infty))}=+\infty.$$

If $(t_n)_n$ is any sequence of times, Claim \ref{Cl:compactness} and the preceding properties imply that one can extract subsequence such that $u(t_n,\cdot-x_n)$ converges in $H^{s_0}$ for some sequence $(x_n)_n\in (\R^d)^{\mathbb{N}}$. This is classical that it implies that one can find a function $x(t)$, $t\in \R$ such that $K$ defined by \eqref{defK} has compact closure in $H^{s_0}$. We give a sketch of proof of this fact. Using the compactness and the fact that the solution $u$ is not identically $0$, we first notice that there exists $R_0>0$, $\eta>0$ such that
$$\inf_{t\in \R}\left(\sup_{X\in \R^d}\int_{|x|<R_0}|u(t,x+X)|^2dx\right)\geq \eta.$$
Thus for all $t$, there exists $x(t)\in \R^d$ such that
$$ \int_{|x|<R_0} |u(t,x+x(t))|^2dx \geq \eta/2.$$
For this choice of $x(t)$, one can check that $K$ defined by \eqref{defK} is compact.

By Proposition \ref{pro1}, we have 
\begin{equation}
\label{Eq:bounded of Phi}
\min_{t\geq 0}\left|\Phi(u(t))-\frac{|P(u)|^2}{M(u)}\right|=0.
\end{equation}
By \eqref{Eq:bounded of Phi}, there exists a sequence of times $(t_n)_n$ such that
\begin{equation} 
\label{Eq:convergences}
\lim_{t_n\rightarrow\infty}\Phi(u(t_n))-\frac{|P(u(t_n))|^2}{M(u(t_n))}=0.
\end{equation}
By the claim, extracting subsequences, there exists $(x_n)_n$ such that $u(t_n,\cdot-x_n)$ convergences to $\varphi_0$ in $H^{s_0}$ (up to extract subsequence). By \eqref{Eq:convergences},
\begin{equation}
\label{Eq:varphi0}
\Phi(\varphi_0)-\frac{|P(\varphi_0)|^2}{M(\varphi_0)}=0.
\end{equation}
Since $\RRR_{\eta}$ is closed in $H^{s_0}$, $\varphi_0\in \RRR_{\eta}$. This implies, by Theorem \ref{thm3}, that
$$ \Phi(\varphi_0)-\frac{|P(\varphi_0)|^2}{M(\varphi_0)}>0.$$
This contradicts to \eqref{Eq:varphi0}. This completes the proof.\\



\end{proof}

We are left with proving Claim \ref{Cl:compactness}.
\begin{proof}[Proof of Claim \ref{Cl:compactness}]
 
 \emph{Step 1. Profile decomposition.}
 Extracting subsequences, we can assume that $u_{0,n}=u_n(0)$ has a profile decomposition as in Subsection \ref{sub:profile}:
 $$u_{0,n}=\sum_{j=1}^J \varphi^j_{Ln}(0)+w_n^J(0),$$
 where $\varphi^j_{Ln}=\frac{1}{\lambda^j_n}\varphi^j_L\left( \frac{t-t_{n}^j}{\lambda_{n}^j},\frac{x-x_{n}^j}{\lambda^j_n} \right)$.
 We denote by $\varphi^j_{n}$ the corresponding nonlinear profiles, and $\tilde{\varphi}^j_n$ the modified nonlinear profiles. 
 
 Our goal is to prove that there is a unique $j_0\geq 1$ such that $\varphi^{j_0}$ is not identically zero, and that $j_0\in \JJJ_{NC}$. We first note that there is at least one $j$ such that $\varphi^j$ is not identically zero. If not, by the small data local well-posedness, we would have 
 $$\lim_{n\to\infty}\|u_n\|_{S^{s_0}(I_n)}=0,$$
 a contradiction with our assumptions.
 
 In the remaining step, we will prove that there is at most one nonzero profile. Arguing by contradiction, we assume that there is at least two nonzero profiles, say $\varphi^1$ and $\varphi^2$. By the small data theory, there exists $\eps_0>0$ such that 
 \begin{equation}
  \label{sct40}
\inf_{t\in I_n}\left\|\tilde{\varphi}^j_n\right\|_{H^{s_0}}\geq \eps_0,\quad j\in \{1,2\}. 
  \end{equation} 
  
\emph{Step 2. Bound of the $H^{s_0}$ norm.}
  We prove that for all $j\geq 1$ we have $I_n\subset I_{\max}(\tilde{\varphi}^j_n)$ and, for large $n$, 
  \begin{equation}
   \label{sct41}
   \sup_{t\in I_n} \|\tilde{\varphi}_n^j(t)\|_{H^{s_0}} \leq \sqrt{A_c^2-\frac 14\eps_0^2}.
  \end{equation} 
  By \eqref{sct32}, and the Pythagorean expansions \eqref{Pythagorean1}, \eqref{PythagoreanHs}, we obtain the bounds, for $J\geq 1$
   \begin{equation*}
    \sum_{j\geq 1} \|\varphi^j_L(0)\|^2_{\dot{H}^{s_0}}\leq A_c^2,\quad \sum_{j\in \JJJ_{NC}} \|\varphi^j_L(0)\|^2_{H^{s_0}}\leq A_c^2
   \end{equation*}
   Fixing a small $\eps>0$, we obtain, by the small data theory for equations \eqref{NLS} and \eqref{NLSh} and Lemma \ref{L:modifprofile1}, that there exists $J_0\geq 1$ such that, for $j\geq J_0+1$, $I_{\max}(\varphi^j)=\R$,
\begin{equation}
\label{boundSlargej}
\forall j\geq J_0+1,\quad
\begin{cases}
   \|\varphi^j\|_{\dot{S}^{s_0}(\R)}<\infty &\text{ if }j\in \JJJ_C\\
   \|\varphi^j\|_{S^{s_0}(\R)}<\infty &\text{ if }j\in \JJJ_{NC}.
  \end{cases}
 \end{equation}    
and
\begin{equation}
\label{boundHs2_1}
\forall j\geq J_0+1,\quad
\limsup_{n\to\infty}\sup_{t\in \R}\|\tilde{\varphi}_n^j(t)\|_{H^{s_0}}\leq \eps.
\end{equation} 
We next prove by contradiction that for $j\in \llbracket 1, J_0\rrbracket$, $I_n\subset I_{\max}(\tilde{\varphi}_n^j)$ and  \eqref{sct41} holds. If not, we can assume (inverting time if necessary, and using Theorem \ref{T:GWP}) that for large $n$, there exists $b_n'\in (0,b_n]$ such that $[0,b_n']\subset \bigcap_{1\leq j\leq J_0} I_{\max}(\tilde{\varphi}_n^j)$ and
\begin{equation}
\label{boundHs2_absurd}
\sup_{1\leq j\leq J_0}\sup_{0\leq t\leq b_n'}\left\|\tilde{\varphi}_n^j(t)\right\|_{H^{s_0}}^2=A_c^2-\frac{1}{2}\eps_0^2. 
\end{equation} 
This implies  that for large $n$
$$ \forall j\in \JJJ_{C}\cap \llbracket 1,J_0\rrbracket,\quad  \sup_{0\leq t\in b_n'}\|\varphi_n^j(t)\|_{\dot{H}^{s_0}}< A_c<A_0.$$
Thus by Proposition \ref{P:boundh}, we obtain a constant $C>0$ such that for large $n$,
\begin{equation}
\label{boundJC}
\sup_{j\in \JJJ_C\cap \llbracket 1,J_0\rrbracket}\|\varphi^j_n\|_{\dot{S}^{s_0}([0,b_n'])}\leq C. 
\end{equation} 
Going back to \eqref{boundHs2_absurd}, we see also that for large $n$
$$\forall j\in \llbracket 1,J_0\rrbracket \cap \JJJ_{NC},\quad \sup_{0\leq t\leq b_n'} \|\varphi_n^j(t)\|_{H^{s_0}}\leq \sqrt{A_c^2-\frac{1}{4}\eps_0^2}.$$

Also, using the Pythagorean expansion of the mass we see that 
$$\forall j\in \JJJ_{NC}, \; M(\varphi^j)\leq m_c-\eta.$$
This implies that $E(\varphi^j)\geq 0$ for $j\in \JJJ_{NC}$. By the Pythagorean expansion \eqref{PythagoreanHs} of the $\dot{H}^1$ norm and Remark \ref{R:Lq}, we obtain
$$\sum_{j\in \JJJ_{NC}} E(\varphi^j)\leq \liminf_n E(u_{0,n})\leq \liminf_n e \left(\int |u_{0,n}|^2\right)-\eta$$
and thus 
$$ \forall j\in \JJJ_{NC}, \quad E(\varphi^j)\leq R(\varphi^j)-\eta$$
where we used the fact that $\mathsf{e}$ is nonincreasing. Thus $\varphi^j_n(0)\in \RRR_{\eta}$. Using that $\PPP(A)$ holds for $A=\sqrt{A_c^2-\frac{1}{4}\eps_0^2}$ we obtain that there exists a constant $C>0$ such that for large $n$
\begin{equation}
\label{boundJNC}
\sup_{j\in \JJJ_{NC}\cap \llbracket 1,J_0\rrbracket}\|\varphi^j_n\|_{S^{s_0}([0,b_n'])}\leq C. 
\end{equation} 
Combining \eqref{boundSlargej}, \eqref{boundJC} and \eqref{boundJNC}, we obtain that the assumptions of Theorem \ref{T:NLapprox} are satisfied on $[0,b_n']$. Using the Pythagorean expansion of Lemma \ref{L:Pythagorean} together with the limit in \eqref{sct32}, we obtain
$$\limsup_{n\to\infty} \sup_{0\leq t\leq b_n'} \sum_{j=1}^{J_0}\left\|\tilde{\varphi}_n^j(t)\right\|_{H^{s_0}}^2\leq A_c^2.$$
By \eqref{sct40}, we deduce
$$ \forall j\in \llbracket 1,J_0\rrbracket, \quad \limsup_{n\to\infty}\sup_{0\leq t\leq b_n'} \left\|\tilde{\varphi}_n^j(t)\right\|_{H^{s_0}}^2\leq A_c^2-\eps_0^2,$$
contradicting \eqref{boundHs2_absurd}. This proves that \eqref{sct41} holds for all $j\geq 1$, for large $n$.

\medskip

\emph{Step 3. Uniqueness of the nonzero profile.}

In this step we still assume that $\varphi^1$ and $\varphi^2$ are nonzero profiles.
Using \eqref{sct32} and \eqref{sct41}, and arguing as in Step 2, we see that the assumptions of Theorem \ref{T:NLapprox} are satisfied on $[a_n,b_n]$. This proves that $u_n$ scatters for large $n$, contradicting \eqref{sct33}. This concludes the proof that there is only one nonzero profile.

\medskip

\emph{Step 4. End of the proof.}

We assume that $\varphi^1$ is the only nonzero profile. By the same argument as before, we obtain that for large $n$, $I_n\subset I_{\max}(\tilde{\varphi}^1_n)$ and
$$\lim_{n\to\infty}\sup_{t\in I_n} \|\tilde{\varphi}^1_n\|_{H^{s_0}}\leq A_c.$$
If $1\in \JJJ_{C}$, we obtain by Proposition \ref{P:boundh} that $\limsup_{n\to\infty}\|\varphi^1_n\|_{\dot{S}^{s_0}(I_n)}<\infty$. Thus the assumptions of Theorem \ref{T:NLapprox} are satisfied on $I_n$, a contradiction with \eqref{sct33}. Thus $1\in \JJJ_{NC}$. By the same argument, we obtain $\limsup_{n\to\infty} \sup_{t\in I_n} \|\tilde{w}_{Ln}^1(t)\|_{H^{s_0}}=0$. Indeed, if not, we would have by the conservation of the $H^{s_0}$ norm for the linear Schr\"odinger equation (and after extraction of a subsequence) $\lim_{n\to\infty} \sup_{t\in I_n} \|\tilde{w}_{Ln}^1(t)\|_{H^{s_0}}=\eps_0>0$, and the same strategy as in Steps 2,3 would yield that $u_n$ scatters for large $n$, a contradiction. We have proved
$$u_n(0,x)=\varphi^1_L(-t_{1,n},x-x_{1,n})+o(1)\text{ in }H^{s_0}.$$
By \eqref{sct33}, $t_{1,n}$ must be bounded, and we can assume $t_{1,n}=0$ for all $n$, i.e.
\begin{equation}
\label{simple_expansion}
u_n(0,x)=\varphi^1(0,x-x_{1,n})+o(1)\text{ in }H^{s_0},
\end{equation} 
which concludes the proof of the claim.
\end{proof}

\subsection{Sketch of proof of Theorem \ref{T:scatt_intro1}.}
The proof of Theorem \ref{T:scatt_intro1} in the case $s_0\leq 1$ is essentially the same Theorem \ref{T:scatt_intro2}, although a little simpler than the proof in the case $s_0>1$. One can prove the following analog of Theorem \ref{T:scatt2}:
\begin{theorem}
 \label{T:scatt3}
 Suppose that Assumption \ref{Assum:NL} holds and that $0<s_0\leq 1$.
For $\eta>0$, we denote 
 $$\RRR_{\eta}=\left\{\varphi\in H^{1}, 0< \int |\varphi|^2\leq m_c-\eta,\; E(\varphi)\leq R(\varphi)-\eta\right\}.$$
 Then for all $A>0$ there exists $\FFF(A,\eta)>0$ such that for any interval $I$, for any solution $u\in C^0(I,H^{1})$ of \eqref{NLS} such that 
 \begin{equation}
  \label{sct20'}
 \exists t\in I,\; u(t)\in \RRR_{\eta} \quad
 \text{and}\quad\sup_{t\in I} \|u(t)\|_{\dot{H}^{1}}^2+\eta\|u(t)\|^2_2\leq A^2,
 \end{equation} 
 one has
 $u\in S^{1}(I)$ and $\|u\|_{S^{1}(I)}\leq \FFF(A,\eta)$.
\end{theorem}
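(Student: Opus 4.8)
The plan is to reproduce the compactness/rigidity argument used for Theorem \ref{T:scatt2}, working throughout in $H^1$ (endowed with the norm $\|u\|_{H^1}^2=\|u\|_{\dot H^1}^2+\eta\|u\|_2^2$) rather than in $H^{s_0}$. As before, I would denote by $\PPP(A)$ the statement that every solution satisfying \eqref{sct20'} obeys a uniform bound $\|u\|_{S^1(I)}\le\FFF(A)$, note that $\PPP(A)$ holds for small $A>0$ by the $H^1$ small-data theory, and argue by contradiction: if the conclusion fails there is a critical level $A_c\in(0,\infty)$ such that $\PPP(A)$ holds for $A<A_c$ but not for $A_c$. Since Property \ref{Proper:bnd} holds with $A_0=\infty$ when $s_0=1$, there is now no upper restriction on $A_c$, which is exactly why the statement is valid for every $A>0$. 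I would then extract a sequence $(u_n)_n$ of solutions realizing this critical behaviour and normalize it in time as in \eqref{sct33}.

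The core is the $H^1$ analog of Claim \ref{Cl:compactness}. First I would take a profile decomposition of $u_n(0)$. Here the principal simplification occurs: because $(u_n(0))_n$ is bounded in $H^1$, when $s_0<1$ Remark \ref{R:noconcentration} applies with $s=1>s_0$ and forces $\JJJ_C=\emptyset$, so there are no concentrating profiles at all; when $s_0=1$ the concentrating profiles are governed by the energy-critical equation \eqref{NLSh} and scatter with uniform $\dot S^{1}$ bounds through Proposition \ref{P:boundh} and Property \ref{Proper:bnd} (with $A_0=\infty$). With at least one nonzero profile present (otherwise small-data theory forces $\|u_n\|_{S^1}\to0$), I would run the critical-level bookkeeping as in Step 2 of the proof of Claim \ref{Cl:compactness}: using the Pythagorean expansions of the mass and of the energy (the $\dot H^1$ part coming from \eqref{Pythagorean1} together with Remark \ref{R:Lq} when $s_0=1$, and, when $s_0<1$, from the fact that the decomposition carries no scaling and is therefore a genuine $H^1$ profile decomposition) I place each nonlinear profile's data in $\RRR_\eta$ (the mass bound $M(\varphi^j)\le m_c-\eta$ gives $E(\varphi^j)\ge0$, and the energy expansion together with the monotonicity of $\mathsf{e}$ gives $E(\varphi^j)\le R(\varphi^j)-\eta$), invoke $\PPP(A)$ at the subcritical level $A=\sqrt{A_c^2-\frac14\eps_0^2}$ to bound the profiles in $S^1$, and apply Theorem \ref{T:NLapprox} and Lemma \ref{L:Pythagorean} to show that every modified profile stays strictly below $A_c^2$. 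The assumption that two profiles are nonzero then leads, via Theorem \ref{T:NLapprox}, to scattering of $u_n$ for large $n$, contradicting \eqref{sct33}; hence there is a single nonzero, non-concentrating profile and the remainder tends to $0$, which gives convergence of $u_n(0,\cdot-x_n)$ in $H^1$.

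With compactness in hand I would construct the critical solution $u_c$ from the limiting datum, noting that $\RRR_\eta$ is closed in $H^1$ so that $u_c(0)\in\RRR_\eta$. Global well-posedness is now immediate from the conservation laws and the Gagliardo--Nirenberg inequality (as recalled in the introduction), so the argument needed in the $s_0>1$ case to rule out finite-time blow-up via Theorem \ref{T:GWP} is unnecessary here. Propagating compactness along the flow and choosing a spatial center $x(t)$ exactly as in the proof of Theorem \ref{T:scatt2}, the set $K=\{u_c(t,\cdot+x(t))\}$ defined by \eqref{defK} has compact closure in $H^1$, so Proposition \ref{pro1} yields $\min_{t\ge0}\big|\Phi(u_c(t))-|P(u_c)|^2/M(u_c)\big|=0$. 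Passing to a limiting profile $\varphi_0\in\RRR_\eta\subset\RRR$ along a minimizing time sequence gives $\Phi(\varphi_0)-|P(\varphi_0)|^2/M(\varphi_0)=0$, contradicting the strict positivity guaranteed by Theorem \ref{thm3}.

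The main obstacle I expect is the same as in the $s_0>1$ case: the simultaneous control, inside the profile decomposition, of the nonlinear approximation of Theorem \ref{T:NLapprox} and of the $H^1$ critical-level bound on the modified profiles, together with the energy/mass bookkeeping certifying that each $\varphi^j(0)\in\RRR_\eta$. Everything else is genuinely simpler than before, since the absence of concentrating profiles when $s_0<1$ removes any need for the quantitative homogeneous scattering bound of Proposition \ref{P:boundh}, and conservation of energy replaces the more delicate global well-posedness input used for $s_0>1$.
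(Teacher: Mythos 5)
Your proposal is correct and follows essentially the same route as the paper, which itself only sketches this proof by reference to Theorem \ref{T:scatt2} together with the observation that $\JJJ_C=\emptyset$ when $s_0<1$. The additional points you spell out---that for $s_0=1$ the concentrating profiles are handled via Property \ref{Proper:bnd} with $A_0=\infty$, and that the conserved $H^1$ bound replaces the a priori assumption \eqref{bound_Hs0}---are exactly the adaptations the paper intends.
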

Note that in this case, by conservation of the energy, any solution of \eqref{NLS} is bounded in $H^1$, so that assumption \eqref{sct20'} is always satisfied for some $A$.

The proof of Theorem \ref{T:scatt3} goes along the same lines as the proof of Theorem \ref{T:scatt2}. Note however that if $s_0<1$, in the $H^{s_0}$ profile decomposition of $u_{0,n}$ in the proof of Claim \ref{Cl:compactness}, the set $\JJJ_c$ is empty since the sequence $(u_{0,n})$ is bounded in $H^1$.

\appendix

\section{Elliptic properties}
\begin{claim}[Pohozaev identity]
 \label{Cl:Pohozaev}
 Let $u\in H^1\cap L^{p_0+2}(\R^d)$, $0<p_1<p_0$. Assume that $u$ is radial or $u\in L^{\infty}_{\loc}(\R^d)$ and, for some real numbers $\alpha_1,\alpha_0,\mu$
 \begin{equation}
  \label{Poho:ell}
  -\Delta u +\alpha_1 |u|^{p_1}u+\alpha_0|u|^{p_0}u=\mu u.
 \end{equation}
 Let $c_j=\frac{dp_j}{2(p_j+2)}$, $j=0,1$. Then
 \begin{equation}
 \label{Pohozaev}
  \int |\nabla u|^2+c_1\alpha_1\int |u|^{p_1+2}+c_0\alpha_0\int |u|^{p_0+2}=0.
 \end{equation}
\end{claim}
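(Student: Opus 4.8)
The plan is to extract two scalar identities from \eqref{Poho:ell} and to combine them linearly so as to eliminate the multiplier $\mu$. The first is the energy identity obtained by multiplying \eqref{Poho:ell} by $\bar u$ and integrating. Since $u\in H^1\cap L^{p_0+2}$ and, by interpolation between $L^2$ and $L^{p_0+2}$, also $u\in L^{p_1+2}$, every term below is finite and an integration by parts gives
\begin{equation}
\label{poho:nehari}
\int|\nabla u|^2+\alpha_1\int|u|^{p_1+2}+\alpha_0\int|u|^{p_0+2}=\mu\int|u|^2.
\end{equation}
This step uses only the stated integrability of $u$ and is routine.

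The second, and main, ingredient is the dilation (Pohozaev) identity, obtained by pairing \eqref{Poho:ell} with $\overline{x\cdot\nabla u}$ and taking real parts. Using the pointwise identities $\mathrm{Re}\,(\bar u\,x\cdot\nabla u)=\tfrac12\,x\cdot\nabla|u|^2$ and $\mathrm{Re}\,(|u|^{p_j}\bar u\,x\cdot\nabla u)=\tfrac{1}{p_j+2}\,x\cdot\nabla|u|^{p_j+2}$, the computation $\mathrm{Re}\int(-\Delta u)\,\overline{x\cdot\nabla u}=\frac{2-d}{2}\int|\nabla u|^2$, and the integration by parts $\int x\cdot\nabla g=-d\int g$, one is led to
\begin{equation}
\label{poho:dil}
\frac{2-d}{2}\int|\nabla u|^2=-\frac{\mu d}{2}\int|u|^2+\frac{\alpha_1 d}{p_1+2}\int|u|^{p_1+2}+\frac{\alpha_0 d}{p_0+2}\int|u|^{p_0+2}.
\end{equation}
Equivalently, \eqref{poho:dil} expresses the vanishing at $\lambda=1$ of the derivative of the action functional associated with \eqref{Poho:ell}, evaluated along the dilations $u_\lambda(x)=u(\lambda x)$.

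To conclude I would add $\tfrac d2$ times \eqref{poho:nehari} to \eqref{poho:dil}. The terms $\mu\int|u|^2$ cancel, the coefficient of $\int|\nabla u|^2$ becomes $\frac{2-d}{2}+\frac d2=1$, and the coefficient of $\alpha_j\int|u|^{p_j+2}$ becomes $\frac d2-\frac{d}{p_j+2}=\frac{dp_j}{2(p_j+2)}=c_j$, for $j=0,1$. This is exactly \eqref{Pohozaev}.

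The delicate point, and the reason for the regularity hypotheses, is the rigorous justification of \eqref{poho:dil}, since $x\cdot\nabla u$ is not in general a global test function. First I would upgrade the regularity of $u$: writing $-\Delta u=\mu u-\alpha_1|u|^{p_1}u-\alpha_0|u|^{p_0}u$, whose right-hand side lies in $L^2+L^{(p_1+2)/(p_1+1)}+L^{(p_0+2)/(p_0+1)}$, and bootstrapping with Calder\'on--Zygmund estimates yields $u\in W^{2,q}_{\loc}$ for the relevant exponents $q$, which legitimizes the integrations by parts on balls $B_R$. I would then carry out the computation on $B_R$ and let $R\to\infty$: the boundary contributions on $\partial B_R$ are of the form $R\int_{\partial B_R}|\nabla u|^2$, $R\int_{\partial B_R}|u|^{p_j+2}$ and $R\int_{\partial B_R}|u|^2$, and since the associated global integrals are finite one has $\liminf_{R\to\infty}R\int_{\partial B_R}g\,d\sigma=0$ whenever $\int_{\R^d}g<\infty$; choosing a common sequence $R_n\to\infty$ along which all these boundary terms vanish gives \eqref{poho:dil}. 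The assumption that $u$ be radial (which provides Strauss-type decay $|u(x)|\lesssim|x|^{-(d-1)/2}$) or merely $u\in L^\infty_{\loc}$ is precisely what is needed to run this regularization and to make sense of the boundary terms.
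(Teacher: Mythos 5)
Your proof is correct and follows essentially the same route as the paper: testing against $x\cdot\nabla\overline{u}+\tfrac d2\overline{u}$ (which is exactly your dilation identity plus $\tfrac d2$ times your Nehari identity), taking real parts, and integrating by parts, with the coefficients $c_j=\tfrac d2-\tfrac{d}{p_j+2}$ coming out as stated. Your discussion of the justification of the boundary terms via local elliptic regularity and a well-chosen sequence $R_n\to\infty$ is in fact more detailed than the paper's sketch, which simply asserts that the formal integrations by parts are rigorous under the stated hypotheses and refers to the literature.
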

\begin{proof}[Sketch of proof]
Multiply by $x\cdot \nabla \overline{u}+\frac{d}{2}\overline{u}$, take the real part and integrate by parts on $\R^d$. It is easy to check that the formal integration by parts are rigorous when $u\in L^{p_0+2}\cap H^1$ with $u$ radial or $L^{\infty}_{\loc}$.
See \cite{Po65} and \cite[Proposition 1]{BeLi83a}.
\end{proof}
We next prove a uniqueness result for the elliptic equation with double power nonlinearity:
\begin{proposition}
\label{P:uniqueness}
Let $\frac{4}{d}<p_1<p_0$, $d\in\{1,2,3,4\}$, $(a_0,a_1)\in \R^2$, and $u\in L^{p_0+2}\cap H^1$ be a radial solution of
\begin{equation}
 \label{Uniqueness:u}
 -\Delta u+a_1|u|^{p_1}u+a_0|u|^{p_0}u=0.
\end{equation}
Then $u\equiv 0$.
\end{proposition}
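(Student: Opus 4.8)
The plan is to combine the Pohozaev identity of Claim~\ref{Cl:Pohozaev} (specialised to $\mu=0$) with the Nehari--type identity obtained by pairing the equation with $\overline u$ and integrating, and then to extract the missing information from the decay of $u$ at spatial infinity, where the restriction $d\leq 4$ (together with $p_j>\tfrac 4d$) should be decisive. First I would check regularity: a radial solution $u\in H^1\cap L^{p_0+2}$ is bounded and smooth by elliptic regularity, and by the radial (Strauss) embedding it tends to $0$ as $|x|\to\infty$; a standard argument on the phase shows $u$ is a constant phase times a real function, so I may take $u$ real-valued. Writing $A=\norm{\nabla u}_{L^2}^2$, $B=\norm{u}_{L^{p_1+2}}^{p_1+2}$, $C=\norm{u}_{L^{p_0+2}}^{p_0+2}$ and $c_j=\frac{dp_j}{2(p_j+2)}$, the two identities read
\begin{gather*}
A+a_1B+a_0C=0,\\
A+c_1a_1B+c_0a_0C=0.
\end{gather*}
Since $p\mapsto \frac{dp}{2(p+2)}$ is increasing we have $c_1<c_0$, and subtracting lets me solve
\[
a_1B=A\,\frac{1-c_0}{c_0-c_1},\qquad a_0C=A\,\frac{c_1-1}{c_0-c_1}.
\]

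If $u\not\equiv0$ then $A>0$ while $B,C\geq 0$, so these relations pin down the signs of $a_1$ and $a_0$. For the sign patterns of $(a_0,a_1)$ incompatible with the displayed identities --- in particular $a_0,a_1\geq0$, where the Nehari identity forces $A=0$ outright --- one is driven to $B=0$ or $C=0$, i.e. to a single--power equation $-\Delta u+a_j|u|^{p_j}u=0$. There the same two identities give $(1-c_j)\,a_j\norm{u}_{L^{p_j+2}}^{p_j+2}=0$, so either $u\equiv0$, or $c_j=1$, i.e. $p_j=\frac{4}{d-2}$ is energy--critical and $a_j<0$. In the energy--critical focusing case $u$ is a rescaled translate of an Aubin--Talenti bubble, which decays like $|x|^{-(d-2)}$ and therefore fails to lie in $L^2(\R^d)$ exactly when $2(d-2)\leq d$, i.e. when $d\leq 4$; this contradiction closes all single--power cases, and is the first place where $d\leq4$ enters.

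There remain the genuinely two--power configurations for which the identities are mutually consistent ($B,C>0$). Here I would argue from the tail: as $|x|\to\infty$ the lower power dominates, so $-\Delta u\approx -a_1|u|^{p_1}u$. When $a_1<0$ (the focusing tail, covering the both--focusing case, where in fact $-\Delta u\geq 0$ globally) $u$ is superharmonic at infinity, and the Newtonian potential representation $u=c_d\,|x|^{2-d}\ast(-\Delta u)$, with $-\Delta u\in L^1$ by interpolation between $L^2$ and $L^{p_0+2}$, yields $u(x)\sim c\,|x|^{-(d-2)}$ with $c\neq0$, which is incompatible with $u\in L^2(\R^d)$ precisely for $d\leq 4$. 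When $a_1>0$ (the defocusing, intercritical tail) the radial profile instead satisfies $\Delta u\approx a_1 u^{p_1+1}$, forcing an algebraic decay of rate $|x|^{-2/p_1}$ that is not square--integrable because $p_1>\frac4d$. In every case $u\in H^1(\R^d)$ then forces $u\equiv 0$.

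The main obstacle is precisely this last step. Whenever Pohozaev and Nehari are jointly consistent there is no purely algebraic contradiction, and one must instead identify the sharp decay rate of the radial solution and test it against $L^2(\R^d)$. Making the tail analysis rigorous --- justifying the asymptotics $|x|^{-(d-2)}$ and $|x|^{-2/p_1}$ for solutions that may change sign, controlling the contribution of the core region in the potential representation, and isolating the exact role of the thresholds $d\leq4$ and $p_j>\frac4d$ --- is where the real work lies; the two integral identities only serve to dispose of the easier sign patterns and to reduce everything to these decay estimates.
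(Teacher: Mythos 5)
Your reduction via the Pohozaev and Nehari identities is fine as far as it goes, but the cases it cannot close --- the ``genuinely two--power'' configurations where the two identities are mutually consistent --- are exactly where your argument breaks down, and the tail analysis you sketch there does not work as stated. In the focusing-tail case you want $u(x)\sim c\,|x|^{2-d}$ with $c\neq 0$ from the Newtonian potential representation. But the coefficient is $c = c_d\int(-\Delta u)$, and for a radial solution $\int(-\Delta u) = -\omega_{d-1}\lim_{r\to\infty} r^{d-1}\partial_r u(r)$; this limit must vanish, since otherwise $|\partial_r u|\gtrsim r^{1-d}$ (or $|u|\gtrsim r^{2-d}$), contradicting $u\in H^1(\R^d)$ precisely when $d\leq 4$ --- this is in fact the opening step of the paper's proof. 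So $c=0$, the leading term disappears, and no contradiction with $u\in L^2$ is obtained; one is forced to iterate the decay to higher order. Similarly, the claimed $|x|^{-2/p_1}$ rate in the defocusing-tail case is the rate of the singular Emden--Fowler solution and is not justified for a decaying, possibly sign-changing $H^1$ solution; and the Newtonian-potential argument says nothing in $d=1,2$. You have correctly identified this tail analysis as ``where the real work lies,'' but it is not merely unfinished: the specific asymptotics you propose to prove are false for the solutions in question.

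The paper's proof works entirely at the level of the radial ODE and never invokes the Pohozaev or Nehari identities. It first shows $\lim_{r\to\infty} r^{d-1}\partial_r u(r)=0$; then for $d=1$ it uses the first integral $F(r)\equiv 0$ to derive the exact tail rate $u\sim r^{-2/p_1}$ (after establishing constancy of sign), which contradicts $u\in L^2$ since $p_1>4$; for $d=2,3,4$ it runs a bootstrap on $M(r)=\sup_{s\geq r}|u(s)|$, integrating the ODE twice from infinity, culminating in an inequality of the form $M(r)\lesssim M(r)\,r^{-\beta}$ with $\beta>0$, which forces $u$ to vanish for large $r$ and hence identically by ODE uniqueness. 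Note that the conclusion in $d\geq 2$ is not ``$u\notin L^2$'' but ``$u\equiv 0$ near infinity''; your strategy of contradicting square-integrability cannot be salvaged without first establishing decay estimates of exactly the kind the paper proves, at which point the integral identities become superfluous.
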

\begin{proof}
The case $a_0=a_1=0$ is trivial. We thus assume without loss of generality $a_1\neq 0$.

We have
\begin{equation}
 \label{eq:ell}
\partial_r(r^{d-1}\partial_r u)=r^{d-1}(a_0|u|^{p_0}u+a_1|u|^{p_1}u).
 \end{equation} 
By Strauss Lemma $|u(r)|\lesssim r^{\frac{1-d}{2}}\|u\|_{H^1}$, thus $u(r)$ is uniformly bounded for $r\geq 1$. By the equation \eqref{eq:ell}, using $1<p_1<p_0$,
$$|\partial_r(r^{d-1}\partial_r u)|\lesssim |u|^2r^{d-1}.$$
Since $\int |u|^2r^{d-1}dr<\infty$, the right-hand side of the preceding inequality is integrable, which proves that the following limit exists:
$$\lim_{r\to \infty}r^{d-1}\partial_ru=\ell.$$
If $\ell \neq 0$, we have $\int |\partial_ru|^2rdr=\infty$ (if $d=1,2$) or $\int |u|^{2}r^{d-1}dr=\infty$ (if $d=3,4$), contradicting our assumptions. Thus $\ell=0$, i.e.
\begin{equation}
 \label{goes_to_0}
 \lim_{r\to\infty}r^{d-1}\partial_ru(r)=0.
\end{equation} 

We first treat the case $d=1$. The equation \eqref{eq:ell} implies $F'(r)=0$, where
 $$ F(r)=\frac{1}{2}|u'(r)|^2-\frac{a_0}{p_0+2}|u(r)|^{p_0+2}-\frac{a_1}{p_1+2}|u(r)|^{p_1+2}.$$
Since $u\in H^1(\R)$, we have that $\int_{0}^{\infty} F(r)dr$ is finite, and thus that $F(r)=0$ for all $r>0$. Also, since $\lim_{r\to\infty}u'(r)=0$, we obtain that
$$\lim_{r\to\infty} \frac{a_0}{p_0+2}|u(r)|^{p_0+2}+\frac{a_1}{p_1+2}|u(r)|^{p_1+2}=0.$$
Since the zeros of the function $\sigma\mapsto \frac{a_0}{p_0+2} \sigma^{p_0+2}+\frac{a_1}{p_1+2}\sigma^{p_1+2}$ are isolated, $u(r)$ must converge to one of these zeros as $r$ goes to infinity. Combining with the fact that $u\in L^2$, we obtain
$$\lim_{r\to\infty} u(r)=0.$$
By uniqueness and since $F(r)=0$, we see that if $u(r)=0$ for some $r$, then $u$ is identically $0$. This implies that $u(r)$ has a constant sign. Changing $u$ into $-u$ if necessary, we can assume that $u(r)>0$ for large $r$. Using the equation satisfies by $u$ and the fact that $u$ goes to $0$ we obtain that $u''(r)>0$ for large $r$. Thus $u'(r)<0$ for large $r$. Using the equation $F(r)=0$, we deduce that for large $r$,
$$ \frac{1}{\sqrt{2}}u'(r)=-\sqrt{\frac{a_0}{p_0+2} |u(r)|^{p_0+2}+\frac{a_1}{p_1+2}|u(r)|^{p_1+2}}.$$
Solving this equation, we obtain that $u(r)r^{\frac{2}{p_1}}$ has a nonzero limit as $r$ goes to infinity. Since $p_1>4$, this contradicts $u\in L^2(\R)$, concluding the proof in the case $d=1$.

We next assume $d=2$ and let $M(r)=\sup_{s\geq r}|u(s)|$. Then by \eqref{eq:ell}, for large $r$, we have
$$|\partial_r(r\partial_r u)|\lesssim M(r)^{p_1-1}|u|^2r.$$
Integrating between $r=R$ and $\infty$, using that $M(r)$ is decreasing, that $\int_0^{\infty} |u|^2rdr<\infty$, and \eqref{goes_to_0} we obtain
$$ |\partial_ru(R)|\lesssim \frac{1}{R}M(R)^{p_1-1}.$$
Since $M(R)\lesssim R^{-1/2}$, we deduce, letting $\alpha=p_1-2>0$, that for $r$ large, 
$$|\partial_ru(r)|\lesssim M(r) \frac{1}{r^{1+\alpha/2}},$$
and thus integrating again between $r=R$ and $\infty$,
$$ |u(r)|\lesssim M(r)\frac{1}{r^{\alpha/2}}.$$
Since $r\mapsto M(r)\frac{1}{r^{\alpha/2}}$ is decreasing, we obtain
$$ M(r)\lesssim M(r)\frac{1}{r^{\alpha/2}}$$
for large $r$. Thus $M(r)=0$ for large $r$. This implies $u(r)=0$ for large $r$ and thus, by standard ODE theory, that $u$ is identically $0$.

We next assume $d\in \{3,4\}$. 
We first show that for all $a>0$, there exists a constant $C>0$ such that 
\begin{equation}
\label{decay}
\exists C>0,\quad r>C\Longrightarrow  |u(r)|\leq Cr^{-a}.
\end{equation} 
Indeed, since $|r^{d-1}\partial_ru|$ and $u(r)$ go to $0$ as $r\to\infty$, \eqref{decay} holds for $a=d-2$. Next, assuming that \eqref{decay} holds for some $a\geq d-2$, one has, integrating \eqref{eq:ell} between $r$ and $\infty$, $|r^{d-1}\partial_ru|\lesssim r^{d-(p_1+1)a}$, and thus $|u(r)|\lesssim r^{2-(p_1+1)a}$. This shows that \eqref{decay} holds with $a$ replaced by $a'=(p_1+1)a-2\geq a+ap_1-2$. Since $ap_1-2\geq p_1-2>0$, we deduce that \eqref{decay} holds for all $a>0$. Integrating twice the equation \eqref{eq:ell} as above we obtain that $M(r)\lesssim \frac{1}{r}M(r)$ for large $r$, which shows again that $u$ is identically $0$.
\end{proof}

\bibliographystyle{abbrv}
\bibliography{paper}
  
\end{document}